\newtheorem{thm}{Theorem}[section]
\newtheorem{lem}[thm]{Lemma}
\newtheorem{prop}[thm]{Proposition}
\newtheorem{rmk}{Remark}[section]
\def\f{\frac}
\def\l{\lambda}
\def\a{\alpha}
\def\gm{\gamma}
\def\r{\rho }
\def\b{\bar}
\def\e{\eta}
\def\h{\phi}
\def\s{\psi}
\def\b{\bar}
\def\v{\varepsilon}
\def\wt{\widetilde}
\def\sg{\sigma}
\def\be{\begin{equation}}
\def\ee{\end{equation}}
\def\bs{\begin{split}}
	\def\es{\end{split}}	
\def\mb{\mathbb}
\def\mc{\mathcal}
\def\bma#1\ema{{\allowdisplaybreaks\begin{split}#1\end{split}}}
\numberwithin{equation}{section}
\begin{document}
\title{ {\LARGE \textbf{ Existence and Nonlinear Stability of  Steady-States to Outflow Problem for the  Full Two-Phase Flow 
 }}}

\footnotetext{*Corresponding author.}
\footnotetext{\emph{E-mail address:}\quad hailiang.li.math@gmail.com (H.-L. Li),\quad shuangzhaomath@163.com (S. Zhao),}
\footnotetext{zhw1880118@163.com (H.-W. Zuo).}
\author{{Hai-Liang  Li $^{a,b}$,\quad Shuang Zhao$^{a,b*}$,\quad Han-Wen Zuo$^{a,b}$}\\[2mm]
{ \it\small $^{a}$ School of Mathematical Sciences,
	Capital Normal University, Beijing 100048, P.R. China.}\\
{ \it\small $^{b}$ Academy for Multidisciplinary Studies, Capital Normal University, Beijing 100048, P.R. China.}}\date{}

\maketitle
\begin{abstract}
	The outflow  problem for the viscous full two-phase flow  model in a half line is investigated in the present paper.
	The existence, uniqueness and   nonlinear  stability of the steady-state are shown respectively  corresponding to the  supersonic, sonic or subsonic state at  far field. 
	This is different from the outflow problem for the isentropic Navier-Stokes equations, where
	there is no steady-state for the subsonic state.
	 Furthermore, we  obtain  either  exponential  time decay rates for the supersonic state  or  algebraic time decay rates for  supersonic  and sonic states  in weighted Sobolev spaces.	
 
\end{abstract}
\noindent{\textbf{Key words.} 
	Full two-phase flow, outflow problem,  stationary solution,  nonlinear stability.}
\section{Introduction} 
 Two-phase flow models play  important roles in applied scientific areas, for instance,  nuclear, engines, chemical engineering, medicine, oil-gas,  fluidization,   waste water treatment, biomedical, liquid crystals,   lubrication \cite{IM,MV,HL,BPPS,LLP}, etc. 
In this paper, we consider the full two-phase flow model  which  
can be  formally obtained from a Vlasov-Fokker-Planck equation coupled with the compressible Navier-Stokes equations through the  Chapman-Enskog expansion \cite{LWW}.

We consider the initial-boundary value problem (IBVP) for the  full two-phase flow model as follows:
\begin{equation}
\left\{
\begin{split}
&                  
\rho_{t}+(\rho u)_{x}=0,\\&
(\rho u)_{t} +[\rho u^{2} + p_{1}(\rho)  ]_{x}      =\mu u_{xx}  +n(v-u),\\&
n_{t}+(n v)_{x}=0,\\&
(n v)_{t} +[n v^{2} + p_{2}(n)  ]_{x}      =(n v_{x})_{x}  -n(v-u), ~~~x>0,~t>0,
\label{f}  
\end{split}   
\right . 
\end{equation}  
where $\rho>0$ and $n>0$ stand for the densities,   $u$ and  $v$ are the velocities of two fluids respectively, the constant $\mu>0$ is the viscosity coefficient, and the pressure-density functions take  forms
\be 
p_{1}(\rho)=A_{1}\rho^{\gamma},\quad p_{2}(n)=A_{2}n^{\alpha} 
\ee
with   $A_{1}>0 ,~ A_{2}>0 $, $ \gamma \geq  1$ and $\alpha\geq 1$.   
 The initial data are given by
 \begin{equation}
 ( \rho,  u, n, v)(0,  x)=  ( \rho_{0}, u_{0}, n_{0}, v_{0})(x),      ~~~~\underset{x\in \mathbb{R}_{+}}{\inf}   \rho _{0}(x) > 0,    ~~~~\underset{x\in \mathbb{R}_{+}}{\inf}   n _{0}(x)  > 0 ,
  \label{initial c}
 \end{equation}
\begin{equation} \begin{split} & \lim_{x\to+\infty} ( \rho_{0}, u_{0}, n_{0}, v_{0})(x)        =(\rho_{+},u_{+}, n_{+}, u_{+}),     \quad \r_{+}>0,\quad n_{+}>0,
 \end{split} \end{equation}
  and the outflow  boundary condition is imposed by
 \begin{equation}
 (u,v)(t,0)=(u_{-},u_{-}),
\quad  u_{-}<0,
 \label{outflow c}
 \end{equation}
where $\rho_{+}>0$, $n_{+}>0$, $u_{+}$  and $u_{-}<0$ are constants.

 The   condition $u_{-}<0$ means that  the fluids flow out the region $\mb{R}_{+}$ through the boundary $x=0$ with the velocity $u_{-}$,  and therefore the problem $({\ref{f}})$-$({\ref{outflow c}})$ is called the outflow problem\cite{MA}. On the other hand, the similar problem with  the case  $u_{-}>0$ is called the inflow problem\cite{MA}, and the densities on the boundary $(\r,n)(t,0)=(\r_{-}, n_{-})$  are also imposed for the well-posedness of the inflow problem.

It is an interesting issue to study the outflow/inflow problem. There are many  important progress made recently  about   the existence and nonlinear stability of steady-states and basic waves to the outflow/inflow problem for one-phase flow, such as compressible Navier-Stokes equations \cite{HQ out,KNZ,NN,KNNZ out,NNU,NNY,CHS,HMS in,MN in,WQ in,HW in,NN in, KZ,HLS in, WQ in2, WWZ,WWZ1, FLWZ, KZ 1,Q}.
%
For instance, for the inflow problem of isentropic Navier-Stokes equations, Matsumura-Nishihara \cite{MN in}  showed the existence and stability of steady-states for both  subsonic and sonic cases together with the stability of the superposition of steady-states and rarefaction waves under small perturbation, while Fan-Liu-Wang-Zhao \cite{FLWZ} investigated that steady-states  and rarefaction waves  were nonlinear stable under large perturbation, and  Huang-Matsumura-Shi \cite{HMS in} obtained the stability of superposition of steady-states and shock waves under small perturbation.
For the inflow problem of full compressible Navier-Stokes equations, Nakamura-Nishibata\cite{NN in} got the existence and stability of steady-states also
for  both subsonic and sonic cases, 
and Qin-Wang \cite{WQ in2, WQ in} and  Hong-Wang \cite{HW in} showed the combination of steady-states and rarefaction waves. 
%
For the outflow problem of isentropic Navier-Stokes equations,  the  existence and  nonlinear stability  of steady-states  for both  supersonic and sonic cases under small perturbation were proved in \cite{KNZ,NN, NNU,NNY}, the convergence rates toward steady-states for supersonic and sonic cases were investigated in \cite{ NNU,NNY}, the stability of the superposition of steady-states and rarefaction waves 
were got in \cite{KZ, HQ out}.  
%
For the outflow problem of full compressible Navier-Stokes equations, Kawashima-Nakamura-Nishibata\cite{KNNZ out} established the existence and nonlinear stability of steady-states under small perturbation for three cases: supersonic, sonic and subsonic flow  
while there is no steady-state to the outflow problem of isentropic Navier-Stokes equations for the subsonic case, 
 in addition they also gained the convergence rates toward the stationary solutions for supersonic and sonic cases \cite{KNNZ out}, and Qin\cite{Q}, Wan-Wang-Zhao\cite{WWZ} and Wan-Wang-Zou\cite{WWZ1}   obtained the stability of steady-states, rarefaction waves and their combination under large perturbation.

  It is  of  interest and challenges 
   to investigate the outflow/inflow problem for two-phase flow models 
  due to the  coupled motions of two phases.
Although it is rather complicated, some important progress has been made about the existence and nonlinear stability of steady-states and basic waves to the  outflow/inflow problem for two-phase flow models  \cite{DY,YZZ out, ZL out,HSW in,YZ1,LZ}. For instance,  Yin-Zhu\cite{YZ1} obtained  the existence of steady-states 
 similar to isentropic Navier-Stokes equations\cite{KNZ},  and the  nonlinear stability  and convergence rates of steady-states for the  supersonic case  to the outflow problem of the drift-flux model.
  For the outflow problem of the  two-fluid Navier-Stokes-Poisson system,  the existence of stedy-states  
  which is similar to that of isentropic Navier-Stokes equations\cite{KNZ}, 
  and the nonlinear stability of rarefactions waves and steady-states together with the superposition of steady-states and rarefaction  waves    were proved in  \cite {DY,YZZ out}.  
We established existence and nonlinear stability of steady-states to inflow problem of the model $(\ref{f})$ for  supersonic, sonic and subsonic cases in \cite{LZ}, which is a 
different phenomena compared with the inflow problem for isentropic Navier-Stokes equations \cite{MN in} and full compressible Navier-Stokes equations\cite{NN in}, where there is no steady-state for the supersonic case.

However, there is no result about 
the existence and nonlinear  stability  of steady-states for the outflow problem $(\ref{f})$-$(\ref{outflow c})$. The main purpose of this paper is to prove the existence and  nonlinear  stability of steady-states for the  supersonic, sonic and subsonic case,  and obtain either  exponential  time decay rates for the supersonic flow  or  algebraic time decay rates for both supersonic  and sonic flows.  Contrary to the  isentropic Navier-Stokes equations, the steady-state to the IBVP $(\ref{f1})$-$(\ref{outflow c})$ exists  for the subsonic case.
 
The steady-state
$(\widetilde{\rho },\widetilde{u},\widetilde{n},\widetilde{v})(x)$  to the outflow  problem $(\ref{f})$-$(\ref{outflow c})$ 
satisfies the following system 
\begin{equation}
\left\lbrace 
\begin{split}
&
(  \widetilde{\rho  }   \widetilde{u}   )_{x}  =0,\\&
[\widetilde{\rho}  \widetilde{u}^{2}+p_{1}(\widetilde{\rho})]_{x}
=(\mu \widetilde{u}_{x})_{x}   +\widetilde{n}( \widetilde{v}- \widetilde{u}),\\&
(  \widetilde{n  }   \widetilde{v}   )_{x}  =0,\\&
[\widetilde{n}  \widetilde{v}^{2}+p_{2}(\widetilde{n})]_{x}
=(\widetilde{n}\widetilde{v}_{x})_{x}   -\widetilde{n}( \widetilde{v}- \widetilde{u}),
 ~~~x>0,~~~~~~~~~~~~~~~~~~~~~~~
\end{split}
\right. 
\label{stationary f}
\end{equation}
with  the boundary conditions and spatial far field conditions
\begin{equation}
\begin{split} 
&
(\widetilde{u},\widetilde{v})(0) =(u_{-},u_{-}),
\quad 
\lim_{x \to \infty}
(\widetilde{\rho },\widetilde{u},\widetilde{n },\widetilde{v })(x)
=(\rho_{+},u_{+},n_{+},u_{+}),
\quad
\underset{x\in \mathbb{R}_{+}}  {\inf }\widetilde{\r}(x)   >0,
\quad \underset{x\in \mathbb{R}_{+}}  {\inf }\widetilde{n}(x)   >0.
\label{stationary boundary c}
\end{split} 
\end{equation}
 Integrating $ (\ref{stationary f})_{1}$ and $ (\ref{stationary f})_{3}$  over $(x, +\infty)$, we have  
 \begin{equation}
 \begin{split}
 &
\widetilde{u}=\frac{\rho_{+}}{\widetilde{\rho }}u_{+}
,
\quad 
\widetilde{v}=\frac{n_{+}}{\widetilde{n }}u_{+},
\label{widetilde{n rho u}}
\end{split} 
 \end{equation}
which implies that the following relationship
 \begin{equation}
 u_{+}=\frac{\widetilde{n}(0)}{n_{+}} u_{-}=\frac{\widetilde{\rho}(0)}{\rho_{+}} u_{-}<0
 \label{u_{+}}
 \end{equation}
 is the  necessary  property of the steady-state to  the boundary value problem (BVP)
 $ (\ref{stationary f})$-$ (\ref{stationary boundary c}) $.
   
Define the  Mach number $M_{+}$ and the  sound speed $c_{+}$ as below
\be 
M_{+}:=\f{|u_{+}|}{c_{+}},\quad c_{+}:=(\f{A_{1}\gm\r_{+}^{\gm}+A_{2}\a n_{+}^{\a}}{\r_{+}+n_{+}})^{\f{1}{2}}.\label{c}
\ee 
Then, we have the  following results about the existence and uniqueness  of the  steady-state.
\begin{thm}
	\label{thm stationary s}
Let  $\delta:=|u_{-}-u_{+}|>0$  and $u_{+}<0$ hold. Then there exists  a set $\Omega_{-}\subset \mb{R}_{-}$  such that if $u_{-} \in \Omega_{-}$ and $\delta$ sufficiently small,   there exists a unique strong solution $(\widetilde{\rho },\widetilde{u},\widetilde{n},\widetilde{v})(x)$ to the BVP $(\ref{stationary f})$-$(\ref{stationary boundary c})$  which satisfies either for  the supersonic or subsonic case $M_{+}\neq 1$  that
 \begin{equation}
 ~~~~~~|   \partial_{x}^{k} (\widetilde{\rho}-\rho_{+},\widetilde{u}-u_{+},\widetilde{n}-n_{+},\widetilde{v}-u_{+})  |   \leq C_{1} \delta e^{-c_{0} x} ,~~
 ~k=0,1,2,3,
 \label{M_{+}>1 stationary solution d}
 \end{equation}
 or for the sonic case $M_{+}=1$  that    
 \begin{equation}
 |\partial_{x}^{k}  (\widetilde{\rho}-\rho_{+},\widetilde{u}-u_{+},\widetilde{n}-n_{+},\widetilde{v}-u_{+}) |  \leq  C_{2} \frac{\delta ^{k+1}}{(1+\delta x)^{k+1}},~~~k=0,1,2, 3,
 \label{sigma}
 \end{equation}
 and 
 \begin{equation} 
  (\widetilde{u}_{x},\widetilde{v}_{x})=(a\sigma^{2}(x),a \sigma^{2}(x))+O(|\sigma(x)|^{3}),
 \label{sigma 1}
 \end{equation}
where $\sigma(x)$ is a  smooth function satisfying $\sg_{x}=-a\sg^{2}+O(|\sg|^{3})$ and  
\begin{equation}
c_{1}\frac{\delta}{1+\delta x}\leq \sigma(x)\leq C_{3}\frac{\delta}{1+\delta x},~~~~
|\partial^{k}_{x}\sigma(x)|\leq  C_{3}\frac{\delta^{k+1}}{(1+\delta x)^{k+1}},~~~k=0,1,2,3,
\label{sig}
\end{equation}
and  $C_{i}>0$, $i=1,2,3$, $c_{0}>0$, $c_{1}>0$, and $a>0$  are positive constants.
\end{thm}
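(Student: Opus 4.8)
The plan is to reduce the stationary PDE system (\ref{stationary f})–(\ref{stationary boundary c}) to a finite-dimensional dynamical system on a center-stable manifold and then analyze its decay. First I would use the conservation laws $(\ref{stationary f})_1$ and $(\ref{stationary f})_3$, already integrated in (\ref{widetilde{n rho u}}), to express $\widetilde u$ and $\widetilde v$ algebraically in terms of $\widetilde\rho$ and $\widetilde n$; substituting these into the two momentum equations $(\ref{stationary f})_2$ and $(\ref{stationary f})_4$ yields a closed second-order system for $(\widetilde\rho,\widetilde n)$, or equivalently a first-order system for $W:=(\widetilde\rho-\rho_+,\widetilde u-u_+,\widetilde n-n_+,\widetilde v-u_+)$ of the form $W_x = F(W)$ with $F(0)=0$. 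I would linearize at the far-field state to get $W_x = JW + O(|W|^2)$, compute the eigenvalues of $J$, and show that the relation (\ref{u_{+}}) together with $u_+<0$ forces exactly the spectral splitting that makes a one-dimensional (or suitably low-dimensional) stable/center manifold exist; the Mach number $M_+$ appears precisely as the quantity controlling whether a zero eigenvalue (the case $M_+=1$) or a genuinely negative eigenvalue (the cases $M_+\neq1$) occurs — this is the analogue of the one-phase computation in \cite{KNZ,KNNZ out}, with $c_+$ as in (\ref{c}) playing the role of the sound speed of the mixture.

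Next, for the \textbf{non-sonic case} $M_+\neq1$, I would invoke the stable manifold theorem: the linearization has a negative eigenvalue $-c_0<0$ (with the remaining eigenvalues having nonzero real part and not obstructing), so there is a smooth one-dimensional stable manifold through the origin, and solutions on it decay like $e^{-c_0 x}$. Parametrizing this manifold by one scalar and matching the boundary data $(\widetilde u,\widetilde v)(0)=(u_-,u_-)$ — where the compatibility is exactly (\ref{u_{+}}), which picks the correct point on the curve — gives existence and local uniqueness for $u_-$ in a suitable subset $\Omega_-\subset\mathbb R_-$; shrinking $\delta=|u_--u_+|$ keeps the solution in the neighborhood where the manifold theorem applies, and the bound (\ref{M_{+}>1 stationary solution d}) follows, with the $k=1,2,3$ derivative bounds obtained by differentiating the ODE and using $W_x=F(W)$ repeatedly. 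For the \textbf{sonic case} $M_+=1$, the linearization is degenerate (a zero eigenvalue), so I would instead use a center-manifold reduction: on the one-dimensional center manifold the dynamics reduces to a scalar ODE $\sigma_x = -a\sigma^2 + O(|\sigma|^3)$ with $a>0$ (the sign of $a$ must be checked from the second-order Taylor coefficient of $F$ restricted to the center direction — this is where the specific structure of the two-phase pressures and the drag term $\pm\,\widetilde n(\widetilde v-\widetilde u)$ enters). Solving this Riccati-type equation with initial size $\sim\delta$ gives the algebraic profile $\sigma(x)\sim\delta/(1+\delta x)$ in (\ref{sig}), and since $(\widetilde u_x,\widetilde v_x)$ are, to leading order, proportional to the center coordinate's derivative, one gets (\ref{sigma 1}) and then (\ref{sigma}) by integrating. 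The derivative bounds in (\ref{sig}) come from differentiating $\sigma_x=-a\sigma^2+O(|\sigma|^3)$ inductively.

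\textbf{The main obstacle} I anticipate is twofold: first, verifying that the sign of the reduced quadratic coefficient $a$ is genuinely positive (and nonzero) — this requires a careful second-order expansion of the nonlinearity $F$ along the center eigenvector, and it is the step where the two-phase coupling could in principle destroy the favorable sign that holds for single-phase flow; one must track how $\widetilde v-\widetilde u$ and the two densities co-vary on the center manifold. Second, showing that the eigenvalues of $J$ other than the stable/center one do not interfere — i.e. that there is no additional zero or positive-real-part direction that the boundary condition could excite — so that the stable (resp. center) manifold really is one-dimensional and the matching to the two scalar boundary values $(u_-,u_-)$ is well-posed; concretely, one must prove that the two drag-induced constraints collapse to the single relation (\ref{u_{+}}), leaving exactly one free parameter. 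Once the spectral picture and the sign of $a$ are pinned down, the rest is the standard invariant-manifold-plus-ODE-decay argument, and the quantitative bounds (\ref{M_{+}>1 stationary solution d})–(\ref{sig}) follow by bootstrapping the ODE.
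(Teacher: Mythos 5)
Your overall philosophy (invariant--manifold reduction, and a Riccati equation on a center manifold when $M_{+}=1$) is the same as the paper's, but the concrete reduction you propose does not work as stated, and it hides the two steps the paper actually needs. Because of the drag force, the individual momentum equations $(\ref{stationary f})_{2}$ and $(\ref{stationary f})_{4}$ are \emph{not} in divergence form, so after eliminating $\widetilde{\rho},\widetilde{n}$ via the integrated mass equations you are left with a genuinely second--order system in $(\widetilde{u},\widetilde{v})$; it is not ``equivalently a first-order system $W_{x}=F(W)$'' in the four variables $(\widetilde{\rho}-\rho_{+},\widetilde{u}-u_{+},\widetilde{n}-n_{+},\widetilde{v}-u_{+})$, since these contain only two independent quantities and no derivative components. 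The paper's way out is structural: the drag terms cancel only in the \emph{sum} of the two momentum equations, so integrating that sum over $(x,+\infty)$ makes the $\widetilde{v}$-equation first order, and introducing the extra phase variable $\widetilde{w}:=\widetilde{u}_{x}$ yields the three-dimensional autonomous system $(\ref{sf bar})$. This creates a boundary value, $\widetilde{u}_{x}(0)$, which is not prescribed by $(\ref{stationary boundary c})$; before any manifold theorem can be invoked one must show it is $O(\delta)$, and that is exactly Lemma \ref{lem u_{x}(0)}, proved by energy estimates on the stationary system. Your proposal contains no analogue of either step, and without them the matching of boundary data to an invariant manifold cannot even be set up.

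Second, the spectral picture you assert is not the one that occurs, and you defer precisely the delicate points. In the $3\times 3$ system the signs are given by $(\ref{M_+})$: for $M_{+}>1$ two eigenvalues have negative real part and one is positive (a two-dimensional stable manifold), for $M_{+}<1$ the splitting is reversed, and for $M_{+}=1$ one eigenvalue is positive, one negative, one zero; so your ``one-dimensional stable manifold with one free parameter'' counting is incorrect here, and the set $\Omega_{-}$ arises from intersecting the one-parameter line of admissible boundary data $\{(u_{-}-u_{+},w,u_{-}-u_{+})\}$ with the stable (resp.\ center--stable) manifold, the free parameter being the unprescribed $\widetilde{u}_{x}(0)$. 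Determining the signs of ${\rm Re}\,\lambda_{i}$ without assuming $p_{1}'(\rho_{+})=p_{2}'(n_{+})$ is itself nontrivial and is done via the symmetric functions $(\ref{l f1})$, and the positivity of the Riccati coefficient is not left as a hope but computed explicitly in $(\ref{a})$. Since your reduction step is flawed and you explicitly leave both the eigenvalue analysis and the sign of $a$ unverified, the proposal has genuine gaps at the points where the actual work of the theorem lies.
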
 
\begin{rmk}
Due to the drag force term,  the existence of  steady-states to the IBVP $(\ref{f})$-$(\ref{outflow c})$  is  obtained even for the  subsonic case $M_{+}<1$, which is different  from that  of   the isentropic  Navier-Stokes equations {\rm \cite{KNZ}}.  Moreover,   the existence of  steady-states to the IBVP $(\ref{f})$-$(\ref{outflow c})$ is similar to  that  of the full  compressible Navier-Stokes equations  {\rm\cite{KNNZ out}}.
\end{rmk}
Then, we  have the  nonlinear stability of the steady-state to the IBVP $(\ref{f})$-$(\ref{outflow c})$  for supersonic, sonic   and subsonic cases.  
\begin{thm}\label{thm long time behavior} Let the same conditions in Theorem \ref{thm stationary s}  hold and assume that it holds 
\be |p'_{1}({\r_{+}})-p'_{2}({n_{+}})|	\leq  \sqrt{2}|u_{+}|\min\{ (1+\f{\r_{+}}{n_{+}})[(\gm-1)p'_{1}(\r_{+})]^{\f{1}{2}},(1+\f{n_{+}}{\r_{+}})[(\a-1)p'_{2}(n_{+})]^{\f{1}{2}} \}
\label{p small}
\ee 
 for the sonic case $M_{+}=1.$
	Then, there exists a small positive constant $\v_{0}>0$  such that if 
	\be
	\| (\rho_{0}-\widetilde{\rho}, u_{0}-\widetilde{u}, n_{0}-\widetilde{n},v_{0}-\widetilde{ v}) \|_{H^{1}}+\delta\leq \varepsilon_{0},
	\ee
	the IBVP $(\ref{f})$-$(\ref{outflow c})$ has a unique global solution $(\rho,u, n, v)(t,x)$ satisfying
 \begin{equation} 
 \left\lbrace  \begin{split}&
(\rho-\widetilde{\rho},u-\widetilde{u},n-\widetilde{n},v-\widetilde{v}   )\in C([0,+\infty);H^{1}),
\\
&
(\rho-\widetilde{\rho},n-\widetilde{n} )_{x}\in L^{2}([0,+\infty);L^{2}) ,\\& (u-\widetilde{u},v-\widetilde{v} )_{x} \in L^{2}([0,+\infty);H^{1}),
 \nonumber
 \end{split} 
 \right.  
 \end{equation} 
 and 
 \begin{equation}
 \lim_{t \to +\infty } \sup_{x\in \mathbb{R_{+}}}|(\rho-\wt{\r},u-\wt{u},n-\wt{n},v-\wt{v})(t,x)|=0.
 \end{equation}
\end{thm}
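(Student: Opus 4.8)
\noindent\emph{Plan of proof.} The plan is to reformulate \eqref{f}--\eqref{outflow c} as a perturbation problem around the stationary solution of Theorem~\ref{thm stationary s} and to close a uniform-in-time a priori energy estimate, from which global existence follows by a standard continuation argument and the large-time decay by a Sobolev-type argument. First I would set $W:=(\phi,\psi,\zeta,\omega)=(\rho-\widetilde\rho,\,u-\widetilde u,\,n-\widetilde n,\,v-\widetilde v)$ and subtract \eqref{stationary f} from \eqref{f}. Writing the momentum equations in non-conservative form, the perturbation system consists of two transport equations for $(\phi,\zeta)$ with drift speeds $\widetilde u,\widetilde v$ (which equal $u_-<0$ at $x=0$), two viscous equations for $(\psi,\omega)$ with viscosities $\mu$ and $\widetilde n$, and the linearized drag coupling, contributing $\mp\widetilde n(\psi-\omega)$ to the $\psi$- and $\omega$-equations respectively, modulo terms that are quadratic or carry the small factor $\widetilde v-\widetilde u=O(\delta)$; the data become $(\psi,\omega)(t,0)=(0,0)$, $W(t,x)\to0$ as $x\to+\infty$, and $\|W(0)\|_{H^1}\le\varepsilon_0$. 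Local existence and uniqueness in $C([0,T];H^1)$ with $(\phi,\zeta)_x\in L^2(0,T;L^2)$, $(\psi,\omega)_x\in L^2(0,T;H^1)$ is routine (linearize, iterate, fixed point, using $\inf\widetilde\rho,\inf\widetilde n>0$), so the crux is to show that on any interval $[0,T]$ on which $N(T):=\sup_{[0,T]}\|W(t)\|_{H^1}+\delta$ is sufficiently small one has
\[
\|W(t)\|_{H^1}^2+\int_0^t\!\Big(\|(\phi,\zeta)_x(s)\|^2+\|(\psi,\omega)_x(s)\|_{H^1}^2+|(\phi,\zeta)(s,0)|^2\Big)ds\ \le\ C\|W(0)\|_{H^1}^2+C\delta,
\]
which together with the smallness of $N(T)$ and $\delta$ closes the continuation argument.

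For the zeroth-order estimate I would multiply the $\phi$-, $\psi$-, $\zeta$-, $\omega$-equations respectively by the entropy-type multipliers $\frac{p_1'(\widetilde\rho)}{\widetilde\rho}\phi$, $\psi$, $\frac{p_2'(\widetilde n)}{\widetilde n}\zeta$, $\omega$, sum, and integrate over $\mathbb{R}_+$. The convection cross-terms cancel, the viscous terms give the interior dissipation $\int(\mu\psi_x^2+\widetilde n\,\omega_x^2)$, and the drag term gives the nonnegative quadratic $\sim\int\widetilde n(\psi-\omega)^2$. Two points require care. First, the boundary terms at $x=0$: here the outflow sign $u_-<0$ is exactly what is needed, since $\widetilde u(0)=\widetilde v(0)=u_-$ and $(\psi,\omega)(t,0)=0$ turn the density contributions into the \emph{good} boundary term $-\tfrac{u_-}{2}\big(\tfrac{p_1'(\widetilde\rho(0))}{\widetilde\rho(0)}\phi(t,0)^2+\tfrac{p_2'(\widetilde n(0))}{\widetilde n(0)}\zeta(t,0)^2\big)\ge0$, which \emph{controls} the density traces rather than obstructing the estimate. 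Second, in the sonic case $M_+=1$ the profile decays only algebraically and $\widetilde u_x,\widetilde v_x=a\sigma^2+O(|\sigma|^3)>0$ by \eqref{sigma 1}, so the energy identity generates a term $\sim\int(\widetilde u_x,\widetilde v_x)\,Q(W)$ with $Q$ a quadratic form whose definiteness is not automatic; this is precisely where hypothesis \eqref{p small} enters — it forces $Q$ to be positive definite (the sound-speed mismatch $|p_1'(\rho_+)-p_2'(n_+)|$ being small relative to $|u_+|$ times the relevant speeds), so this term becomes an extra dissipation $\gtrsim\int\sigma^2|W|^2$ instead of a bad term. All remaining contributions are cubic, hence bounded by $CN(T)$ times the dissipation, plus $C\delta$ coming from the stationary bounds \eqref{M_{+}>1 stationary solution d}--\eqref{sig}.

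Next I would carry out the first-order estimates: differentiate the whole system in $x$, multiply by $(\phi_x,\psi_x,\zeta_x,\omega_x)$ with the analogous weights, and integrate; the viscous equations then yield $\int_0^t\|(\psi_{xx},\omega_{xx})\|^2$. To recover the missing hyperbolic dissipation $\int_0^t\|(\phi_x,\zeta_x)\|^2$, I would use the now-standard manipulation of the momentum equations: multiply the $\psi$-equation by $\phi_x$ and the $\omega$-equation by $\zeta_x$, so that the pressure gradients produce $p_1'(\widetilde\rho)\phi_x^2$ and $p_2'(\widetilde n)\zeta_x^2$, while the inertial terms $(\rho\psi)_t\phi_x$, $(nv)_t\zeta_x$ are integrated by parts in $t$ and rewritten through the mass equations, generating a $\frac{d}{dt}$-term plus lower-order commutators absorbed by $N(T)$ and, in the supersonic regime where $\widetilde u^2>c^2$ obstructs a naive absorption, by the exponential decay \eqref{M_{+}>1 stationary solution d} of the profile. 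The new boundary terms at $x=0$ are again either favorably signed or dominated, via Young's inequality, by the density traces $|(\phi,\zeta)(t,0)|$ already on the left-hand side plus the interior dissipation and $C\delta$. Combining the zeroth- and first-order identities with suitably small coefficients and using \eqref{M_{+}>1 stationary solution d}--\eqref{sig} to absorb the stationary sources into $C\delta$ closes the a priori estimate, hence gives the global solution in the stated class. Finally, from the uniform $H^1$ bound, the space-time integrability $\int_0^\infty(\|(\phi,\zeta)_x\|^2+\|(\psi,\omega)_x\|_{H^1}^2)ds<\infty$ and bounds on $\partial_t W$ read off from the equations, a standard argument gives $\|(\phi,\zeta)_x(t)\|^2+\|(\psi,\omega)_x(t)\|^2\to0$ as $t\to\infty$, whence $\|W(t)\|_{L^\infty}\to0$ by the interpolation $\|f\|_{L^\infty}^2\le C\|f\|_{L^2}\|f_x\|_{L^2}$.

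I expect the main obstacle to be the sonic case: when the stationary profile degenerates to algebraic decay, the weighted terms carrying the factor $\sigma^2$ must be tracked with care, and the positivity needed to absorb them is exactly what forces the structural hypothesis \eqref{p small}; carrying this out simultaneously with the sign bookkeeping of all the outflow boundary terms at $x=0$ and with the extra two-phase cross-couplings is considerably more delicate than in the one-phase setting of \cite{KNNZ out,NNU,NNY}.
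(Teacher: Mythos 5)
Your plan follows essentially the same route as the paper's proof: an $H^1$ a priori estimate for the perturbation closed by a continuation argument, with the outflow sign $u_-<0$ turning the boundary terms into control of the density traces, the drag dissipation $\int \widetilde n(\bar\psi-\psi)^2$ and viscous dissipation in the basic estimate, hypothesis \eqref{p small} entering exactly to make the $\widetilde u_x,\widetilde v_x$-weighted quadratic form in the sonic case positive (semi)definite, cross multipliers ($\psi$-equation against $\phi_x$, etc.) to recover $\int\|(\phi_x,\bar\phi_x)\|^2$ and $-\psi_{xx}$-type multipliers for the second-order velocity dissipation, and the standard interpolation argument for the $L^\infty$ decay. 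Two minor calibrations against the paper: under \eqref{p small} the weighted quadratic form is only positive semidefinite and is simply dropped (no extra dissipation $\gtrsim\int\sigma^2|W|^2$ is available or needed), and the remaining profile-weighted quadratic terms (which are not cubic in the perturbation) are absorbed not merely as ``$C\delta$'' but through the weighted Poincar\'e-type inequality of Lemma~\ref{lem d2}, which converts them into $C\delta$ times the boundary traces plus the gradient dissipation already present on the left-hand side.
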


In addition, we have the time  convergence rates  of the  global solution to the IBVP  $(\ref{f})$-$(\ref{outflow c})$ for both supersonic and sonic cases.
 
\begin{thm}
	\label{thm time decay}
	
Assume that the same conditions in Theorem \ref{thm stationary s} hold. Then, the following results hold.
 \begin{itemize}
 \item[(i)]	For $M_{+}>1$ and $\l>0$, if the initial data satisfy
$$ (1+ x)^{\frac{\lambda}{2}}(\rho_{0}-\widetilde{\rho}, u_{0}-\wt{u}, n_{0}-\wt{u}, n-\wt{n} ) \in L^{2}(\mathbb{R_{+}})$$ 
and 
\be \| (\rho_{0}-\widetilde{\rho}, u_{0}-\widetilde{u}, n_{0}-\widetilde{n},v_{0}-\widetilde{ v}) \|_{H^{1}}+\delta\leq \varepsilon_{0}, \ee
for a small positive constant $\v_{0}>0$,  then the solution $(\rho, u, n, v)(t, x)$ to the IBVP $(\ref{f})$-$(\ref{outflow c})$ satisfies
\begin{equation}
\begin{aligned}
~~~~\|(\r-\wt{\r},u-\wt{u},n-\wt{n},v-\wt{v}) (t)\|_{L^{\infty}}\leq C_{4} \delta_{0} (1+ t)^{-\frac{\lambda}{2}},
\label{M_{+}>1 algebra d}
\end{aligned}
\end{equation}
where $C_{4}>0$  
and 
$ 
\delta_{0}:=\| (\rho_{0}-\widetilde{\rho}, u_{0}-\widetilde{u},n_{0}-\widetilde{n},v_{0}-\widetilde{v} )  \|_{H^{1}}
+ \|  (1+x)^{\frac{\lambda}{2}}(\rho_{0}-\widetilde{\rho}, u_{0}-\widetilde{u},n_{0}-\widetilde{n},v_{0}-\widetilde{v} )  \|_{L^{2}}$ are constants independent of time.
\item[(ii)] For $M_{+}=1$,  $1 \leq \lambda<\l^{*}, \l^{*}:=2+\sqrt{8+\f{1}{1+b^{2}}}$ and $b:=\f{\r_{+}(u_{+}^{2}-p'_{1}(\r_{+}))}{|u_{+}|\sqrt{(\mu+n_{+})n_{+}}}$,  
if  for arbitrary  $\nu \in (0,\l ]$, there  exists a small positive constant 
 $\varepsilon_{0}>0$
such that 
\be\|\sigma^{-\frac{\lambda }{2} }(\rho_{0} -\widetilde{\rho }, u_{0}-\widetilde{u}, n_{0}-\widetilde{n}, v_{0}-\widetilde{v})   \|_{H^{1}}+\delta^{\frac{1}{2}} \leq \varepsilon_{0},\ee
then the IBVP $(\ref{f})$-$(\ref{outflow c})$ has a unique global solution $(\rho,u, n, v)(t,x)$ satisfying
\be 
\left\lbrace 
\begin{aligned}
&	\sg^{-\f{\nu}{2}}(\r-\wt{\r}, u-\wt{u}, n-\wt{n}, v-\wt{v}) \in C([0,+\infty): H^{1}),
\\
&
\sg^{-\f{\nu}{2}} (\r-\wt{\r}, n-\wt{n})_{x} \in L^{2}([0, +\infty); L^{2}), 
\\
&
\sg^{-\f{\nu}{2}} (u-\wt{u}, v-\wt{v} )_{x} \in L^{2}([0,+\infty); H^{1} ),
\end{aligned}
\right. 
\ee 
and
\begin{equation}\| 
\sg^{-\f{\nu}{2}}(\rho-\widetilde{\rho}, u-\widetilde{u}, n-\widetilde{n}, v-\widetilde{v} )(t) \|_{H^{1}}
\leq 
C_{5} \delta_{1}
(1+t)^{-\frac{\lambda-\nu}{4}},
\label{M_{+}=1 algebra d}
\end{equation}
where $\sg(x)$ is defined by $(\ref{sig})$ satisfying $(\ref{sg0})$, $C_{5}>0$ is a positive constant independent of time, and $\delta_{1}:=\|  \sigma^{-\frac{\lambda}{2} }(\rho_{0}-\widetilde{\rho}, u_{0}-\widetilde{u}, n_{0}-\widetilde{n}, v_{0}-\widetilde{v}) \|_{H^{1}} $ is a  constant.
\end{itemize}
\end{thm}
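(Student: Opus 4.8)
We build on Theorem~\ref{thm long time behavior}. Set $(\varphi,\psi,\chi,\omega):=(\rho-\wt\rho,\,u-\wt u,\,n-\wt n,\,v-\wt v)$ and subtract $(\ref{stationary f})$ from $(\ref{f})$; the resulting perturbation system has coefficients and remainder terms expressed through $\wt\rho-\rho_{+}$, $\wt n-n_{+}$, $\wt u_{x}=\wt v_{x}$ and their derivatives, which are controlled by $(\ref{M_{+}>1 stationary solution d})$ in the supersonic/subsonic regime and by $(\ref{sigma})$–$(\ref{sig})$ in the sonic regime. By Theorem~\ref{thm long time behavior} there already exists a unique global solution with
\[
\sup_{t\ge0}\|(\varphi,\psi,\chi,\omega)(t)\|_{H^{1}}^{2}+\int_{0}^{\infty}\big(\|(\varphi,\chi)_{x}\|^{2}+\|(\psi,\omega)_{x}\|_{H^{1}}^{2}\big)\,dt\le C\v_{0}^{2},
\]
so the task is only to upgrade this to the stated rates. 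In both cases the plan is a spatially weighted energy estimate followed by a time-weighted iteration, each stage closed by the continuation argument of Theorem~\ref{thm long time behavior}.

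\emph{Supersonic case $(i)$.} For $0\le k\le\lambda$ and $w_{k}(x)=(1+x)^{k}$ I would multiply the perturbation equations, and their $x$-derivatives, by $w_{k}$ times the corresponding unknown and integrate over $\mathbb{R}_{+}$. Since $u_{-}<0$ the boundary $x=0$ is characteristic-outgoing, so every boundary contribution is of favourable sign or is absorbed by the viscous terms $\mu\psi_{xx}$, $(\wt n\,\omega_{x})_{x}$; and since $\wt u<0$ on $\mathbb{R}_{+}$, the term coming from differentiating the weight, $-\tfrac12\wt u\,\partial_{x}w_{k}\,\varphi^{2}\sim\tfrac{k}{2}|u_{+}|(1+x)^{k-1}\varphi^{2}$ (and the analogue for $\chi$), is a gain. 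Combined with the intrinsic dissipation of the system, which is coercive exactly because $M_{+}\neq1$ as in Theorem~\ref{thm long time behavior}, this yields
\[
\tfrac{d}{dt}\|w_{k}^{1/2}(\varphi,\psi,\chi,\omega)\|_{H^{1}}^{2}+c\big(\|w_{k}^{1/2}(\varphi,\chi)\|_{H^{1}}^{2}+\|w_{k}^{1/2}(\psi,\omega)_{x}\|_{H^{1}}^{2}+\|w_{k-1}^{1/2}(\varphi,\psi,\chi,\omega)\|^{2}\big)\le C\|w_{k-1}^{1/2}(\varphi,\psi,\chi,\omega)\|_{H^{1}}^{2}.
\]
Multiplying the estimate with weight $w_{\lambda-j}$ by $(1+t)^{j}$ and integrating in time, the right-hand side at level $j$ is dominated by the dissipation generated at level $j-1$; iterating for $j=0,1,\dots,\lfloor\lambda\rfloor$ (the fractional part handled by the same device with a non-integer weight) gives $\|(\varphi,\psi,\chi,\omega)(t)\|_{H^{1}}^{2}\le C\delta_{0}^{2}(1+t)^{-\lambda}$, and $\|f\|_{L^{\infty}}^{2}\le2\|f\|_{L^{2}}\|f_{x}\|_{L^{2}}\le\|f\|_{H^{1}}^{2}$ then gives $(\ref{M_{+}>1 algebra d})$.

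\emph{Sonic case $(ii)$.} Here $\wt\rho-\rho_{+},\ \wt n-n_{+}\sim\sigma(x)\sim\delta/(1+\delta x)$ and $\wt u_{x}=\wt v_{x}=a\sigma^{2}+O(|\sigma|^{3})$ by $(\ref{sigma 1})$, so the admissible weight is a power of $\sigma$ itself: take $\sigma^{-\nu}\sim(1+\delta x)^{\nu}$, $0<\nu\le\lambda$. From $\sigma_{x}=-a\sigma^{2}+O(|\sigma|^{3})$ and $\wt u\to u_{+}<0$ one finds $-\tfrac12\partial_{x}(\sigma^{-\nu}\wt u)=\tfrac12\nu a|u_{+}|\,\sigma^{-\nu+1}+O(\sigma^{-\nu+2})>0$, so the weight-derivative term returns only the dissipation $\|\sigma^{(1-\nu)/2}(\varphi,\chi)\|^{2}$ — one $\sigma$-power weaker than the energy weight, this being the signature of the degeneracy at the sonic point. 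Carrying out the $\sigma^{-\nu}$-weighted $H^{1}$ estimate, the leading part of the system reduces, after diagonalising the hyperbolic part at $(\rho_{+},u_{+},n_{+},u_{+})$, to a Burgers-type balance; the weighted energy identity produces a quadratic form in the leading unknowns whose positivity, uniformly in $\nu\in(0,\lambda]$, needs both $(\ref{p small})$, which makes the drag contribution $n(v-u)$ sign-definite, and the discriminant condition $\lambda<\lambda^{*}=2+\sqrt{8+\f{1}{1+b^{2}}}$ with $b=\f{\r_{+}(u_{+}^{2}-p'_{1}(\r_{+}))}{|u_{+}|\sqrt{(\mu+n_{+})n_{+}}}$. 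This gives
\[
\tfrac{d}{dt}\|\sigma^{-\nu/2}(\varphi,\psi,\chi,\omega)\|_{H^{1}}^{2}+c\,\|\sigma^{(1-\nu)/2}(\varphi,\chi)\|_{H^{1}}^{2}+c\,\|\sigma^{-\nu/2}(\psi,\omega)_{x}\|_{H^{1}}^{2}\le C\,(\text{cubic terms}).
\]
Since the dissipative weight $\sigma^{1-\nu}$ carries an extra factor $\sigma\sim(1+\delta x)^{-1}$ below the energy weight, interpolating $\sigma^{-\nu}\le\sigma^{-\lambda}$ against the dissipation (with $\|\sigma^{-\lambda/2}(\varphi,\psi,\chi,\omega)(t)\|^{2}$ controlled by $\delta_{1}^{2}$ from the data) and running the corresponding time-weighted iteration upgrades the decay by half a power at each step, terminating at $\|\sigma^{-\nu/2}(\varphi,\psi,\chi,\omega)(t)\|_{H^{1}}^{2}\le C\delta_{1}^{2}(1+t)^{-(\lambda-\nu)/2}$, which is $(\ref{M_{+}=1 algebra d})$.

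\emph{Main obstacle.} Part $(i)$ is essentially routine once the outflow boundary signs and the $M_{+}\neq1$ coerciveness inherited from Theorem~\ref{thm long time behavior} are available. The crux is the sonic weighted a priori estimate: identifying the correct $\sigma$-power weight, tracking how the single degenerate factor $\sigma$ propagates through the whole energy identity, and extracting from it simultaneously the admissibility condition $(\ref{p small})$ and the sharp threshold $\lambda^{*}$ — and then forcing the time-weighted iteration to converge against the weak, $\sigma$-degenerate dissipation so as to capture the parabolic $\tfrac14$-power rate $(1+t)^{-(\lambda-\nu)/4}$.
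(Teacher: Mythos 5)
Your part (i) is in substance the paper's own argument (algebraic weight $(1+x)^{\nu}$ plus the Kawashima--Matsumura/Nishikawa time-weighted induction), with one gloss worth flagging: the weight-derivative term is not just the favourable diagonal piece $-\tfrac12\wt u\,\partial_xw_k(\varphi^2+\chi^2)$ but the full flux $G_1$, including the cross terms $(p_1(\rho)-p_1(\wt\rho))\psi$ and $(p_2(n)-p_2(\wt n))\omega$; its sign is obtained in the paper only after decomposing $\psi=\bar\psi+(\psi-\bar\psi)$, absorbing $(\psi-\bar\psi)$ into the drag dissipation, and checking that the resulting quadratic form (the matrix $\boldsymbol{M_{3}}$) is positive definite, which uses $M_{+}>1$ specifically, not merely the ``$M_{+}\neq1$ coerciveness'' you invoke.

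Part (ii) has two genuine gaps. First, you make the positivity of the sonic quadratic form depend on $(\ref{p small})$, but Theorem \ref{thm time decay}(ii) does not assume $(\ref{p small})$ — the paper's remark after the theorem stresses that the weighted argument is designed precisely to remove it. The mechanism replacing it is concrete: decompose $\psi=\bar\psi+(\psi-\bar\psi)$, use the drag dissipation $\int\sigma^{-\nu}n(\bar\psi-\psi)^2dx$, and apply the linear transformation $(\h,\b{\h},\b{\s})^{\rm T}=\boldsymbol{P}(\hat\rho,\hat n,\hat v)^{\rm T}$ that diagonalizes the degenerate matrix $\boldsymbol{M_{4}}$ (eigenvalues $\hat\lambda_1,\hat\lambda_2>0$, $\hat\lambda_3=0$ at $M_{+}=1$), while the term $R_1$ with $\wt u_x=\wt v_x=a\sigma^2+O(\sigma^3)\ge0$ supplies the missing dissipation on $\sigma^{-\frac{\nu-2}{2}}\hat v$; as written, your proof establishes a strictly weaker statement than claimed. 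Second, your sketch provides no source of the extra dissipation needed once $\nu\ge3$: the weight-derivative form plus $R_1$ (the matrix $\boldsymbol{M_{5}}$) is positive only for $\nu$ up to about $3$, and the paper reaches the full range $\lambda<\lambda^{*}=2+\sqrt{8+\tfrac{1}{1+b^{2}}}$ (close to $5$) only by applying the Hardy-type inequality of Lemma \ref{hardy lem} with $\zeta=\sigma^{-(\nu-1)}$ to the viscous terms, which yields the additional contribution $a^{2}(\mu+n_{+})\tfrac{(\nu-1)^{2}}{4}\|\sigma^{-\frac{\nu-2}{2}}\hat v\|^{2}$ making $\boldsymbol{M_{6}}$ positive definite exactly under $\lambda<\lambda^{*}$; likewise the cubic terms under the degenerate weight require Lemma \ref{lem non f} (using $\lambda\ge1$). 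Without these steps your ``discriminant condition'' is asserted rather than derived, and the weighted iteration cannot close for the stated range of $\lambda$.
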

%
\begin{rmk}
  For $M_{+}>1$,  the exponential time convergence rates of the global solution  to the IBVP $(\ref{f})$-$(\ref{outflow c})$  can be  established. Indeed, assume that $M_{+}>1$,  $u_{+}<0$ and a certain positive constant $\l>0$ hold.
 For a certain positive constant $\kappa \in (0, \lambda]$, there exists a small positive constant $\varepsilon_{0}>0$ such that if 	
$ e^{\frac{\lambda }{2} x}(n_{0}-\widetilde{n},\rho_{0} -\widetilde{\rho },u_{0}-\widetilde{u},n_{0}-\widetilde{n},v_{0}-\widetilde{v})   \in L^{2}(\mathbb{R_{+}})$
and
$$\| ( \rho_{0}-\widetilde{\rho}, u_{0}-\widetilde{u},n_{0}-\widetilde{n},v_{0}-\widetilde{v}))\|_{H^{1}} +\delta
	  \leq  \varepsilon_{0},$$
then the solution $(\rho, u, n, v)(t, x)$ to the IBVP $(\ref{f})$-$(\ref{outflow c})$ satisfies
	\begin{equation}
\| 	(\rho-\widetilde{\rho},	u-\widetilde{u},n-\widetilde{n}, v-\widetilde{v} ) (t)\|_{H^{1}}
	\leq 
	C_{6} \delta_{2}	e^{-\frac{\kappa_{1}}{2} t},
	\label{M_{+}>1 exp d}
	\end{equation}
	where $C_{6}>0$ and $\kappa_{1}\ll  \kappa$ are positive constants independent of time, and $\delta_{2}:=\| ( \rho_{0}-\widetilde{\rho},	u_{0}-\widetilde{u}, n_{0}-\widetilde{n},v_{0}-\widetilde{v})\|_{H^{1}}+  \|e^{\frac{\lambda}{2} x}( \rho_{0}-\widetilde{\rho},	u_{0}-\widetilde{u}, n_{0}-\widetilde{n},v_{0}-\widetilde{v})\|_{L^{2}} $ is a constant.
	
The proof of $(\ref{M_{+}>1 exp d})$ can be obtained by similar arguments as for $(\ref{M_{+}>1 algebra d})$. The details are omitted.
\end{rmk}
\begin{rmk}
In Theorem \ref{thm time decay},  we remove the restriction $(\ref{p small})$, and obtain  the nonlinear stability of steady-states and time decay rates of the solution to the  IBVP $(\ref{f})$-$(\ref{outflow c})$ for  $M_{+}=1$ with the weighted energy method. 
Moreover, if  $p'_{1}(\r_{+})=p'_{2}(n_{+})$, time decay rates of the solution to the IBVP $(\ref{f})$-$(\ref{outflow c})$ for  $M_{+}=1$ are the same as that of isentropic Navier-Stokes equations {\rm \cite{NNU}}.
\end{rmk}
We explain main strategies to prove  Theorems $\ref{thm stationary s}$-$\ref{thm time decay}$.
  The system $(\ref{f})$-$(\ref{outflow c})$ can be viewed as two compressible isentropic Navier-Stokes equations coupled with  each other through the drag force relaxation mechanisms.
Different from the  isentropic Navier-Stokes equations\cite{KNZ}, we can not reformulate 
   two momentum equations $(\ref{f})_{2}$ and $(\ref{f})_{4}$ into conservation forms due to the influence of  drag force, which implies that the steady-state satisfies the system $(\ref{stationary f})$-$(\ref{stationary boundary c})$ 
  consisting of a first-order and a second-order ordinary differential equations instead of  only a first-order  ordinary differential equation in \cite{KNZ},
   and it is not straightforward  to  apply the center manifold theory\cite{C}. 
   To overcome the difficulty, we  introduce a new 
   variable  $\wt{w}:=\wt{u}_{x}$, get the estimate
   $|\wt{u}_{x}(0)|\leq C|u_{-}-u_{+}|$  in Lemma \ref{lem u_{x}(0)},  and then rewrite the system $(\ref{stationary f})$ into the $3\times 3$   system $(\ref{sf bar})$ of autonomous    ordinary differential equations.    
   Since the condition  $p'_{1}(\r_{+})=p'_{2}(n_{+})$ is not necessary, we need subtle  analysis to obtain the sign of  ${\rm Re}\l_{i}, i=1,2,3$, where $\l_{i}, i=1,2,3$ are  three eigenvalues  of the linearized $3\times 3$ system of $(\ref{sf bar})$. 
%
It should be noticed that  the linearized  system of $(\ref{sf bar})$ has at least one eigenvalue with negative real part due to the effect of  drag force, so that we  obtain the existence of  steady-states  for the supersonic, sonic and  subsonic case in Theorem \ref{thm stationary s}, which is different from the outflow problem of the isentropic Navier-Stokes system\cite{KNZ}, where there is no steady-state for the subsonic case.

We establish the uniform estimates of the perturbation $(\h,\s,\b{\h},\b{\s}):=(\r-\wt{\r},u-\wt{u}, n-\wt{n},v-\wt{v})$ to prove the nonlinear stability of steady-states for the supersonic $M_{+}>1$, sonic case $M_{+}=1$  and subsonic case $M_{+}<1$.
  For $M_{+}=1$,  it is easy to check that $(\wt{\r}\wt{u}^{2}+p_{1}(\wt{\r}))_{x}$ and $(\wt{n}\wt{v}^{2}+p_{2}(\wt{n}))_{x}$ decay slower than $\wt{u}_{xx}$ for 
 $p'_{1}(\r_{+})\neq p'_{2}(n_{+})$ owing to $(\ref{stationary f})_{2}$, $(\ref{stationary f})_{4}$ and  $(\ref{sigma})$, which implies the  term
 \be
\int R_{2}dx:= \int  \h\s  \f{ (\wt{\r}\wt{u}^{2}+p_{1}(\wt{\r}))_{x}}{\wt{\r}}
  + \b{\h}\b{\s} \f{ (\wt{n}\wt{v}^{2}+p_{2}(\wt{n}))_{x}}{\wt{n}}  dx
  \label{cross t}
 \ee 
can not be controlled directly as in \cite{KNZ}.
 With the help of $\wt{u}_{x}\geq 0$ and $\wt{v}_{x}\geq 0$, we  turn to deal with the  terms 
 \be
\int R_{2}dx
 +\int (\r\s^{2} +p_{1}(\r) -p_{1}(\wt{\r}) -p'_{1}(\wt{\r})\h )\wt{u}_{x} dx
 +(n\b{\s}^{2} +p_{2}(n) -p_{2}(\wt{n}) -p'_{2}(\wt{n}) )\wt{v}_{x} dx,
 \ee 
the leading terms of which can be rewritten as two positive semidefinite 2 variable quadratic forms $(\ref{r_1'})$ under the condition  $(\ref{p small})$.
 
By the weighted energy method, we get the exponential or algebraic time decay rates for  the  supersonic case $M_{+}>1$ if the  initial perturbation belongs to the exponential or algebraic weighted Sobolev space, and obtain algebraic time decay rates for the sonic case $M_{+}=1$. 
To get the basic weighted energy estimates, we use $(\ref{M_{+}>1 stationary solution d})$ and the   dissipation on the relaxation friction term $\b{\s}-\s$, and decompose $\s$ as $\s=\b{\s} +(\s-\b{\s})$, as motivated by Li-Wang-Wang\cite{LWW}. 
Due to the  algebraic decay $(\ref{sigma})$ of steady-states,  convergence rates of steady-states  for the sonic case $M_{+}=1$ is worse than that of the supersonic case $M_{+}>1$. 
It is necessary to  use the delicate   algebraic decay $(\ref{sigma})$-$(\ref{sig})$ and the dissipation on the drag force $\b{\s}-\s$,  decompose $\s$ as $\s=\b{\s}+(\s-\b{\s})$, and obtain more delicate  estimates to get the convergence rates for $M_{+}=1$.
It should be noticed that we make full use of the dissipation on  drag force term and the viscous terms,  and take  a linear coordinate transformation
\be 
\begin{pmatrix}
	\h\\ \b{\h} \\ \b{\s}
\end{pmatrix}=
\boldsymbol{P}
\begin{pmatrix}
	\hat{\r}\\ \hat{n} \\ \hat{v}
\end{pmatrix},
\quad \quad \quad 
{\rm with ~a~ invertible~ matrix~ } \boldsymbol{P},
\ee 
to gain the 3 variable quadratic form $\hat{\l}_{1}\hat{\r}^{2}+\hat{\l}_{2}\hat{n}^{2}$ with $\hat{\l}_{1},\hat{\l}_{2}>0$, which plays an important role  in basic weighted energy estimates together with some crucial cancellations.
In fact,  we obtain the algebraic time decay rates of the solution to  IBVP $(\ref{f})$-$(\ref{outflow c})$ for $M_{+}=1$ for the initial perturbation satisfying $\sg^{-\f{\l}{2}}(\r_{0}-\wt{\r},u_{0}-\wt{u}, n_{0}-\wt{n}, v_{0}-\wt{v})\in L^{2}(\mb{R}_{+})$, with  $\l<\l^{*}, \l^{*}:=2+\sqrt{8+\f{1}{1+b^{2}}}$,  $b=\f{\r_{+}(u_{+}^{2}-p'_{1}(\r_{+}))}{|u_{+}|\sqrt{(\mu+n_{+})n_{+}}}$ and the  function $\sg\geq 0$ satisfying $(\ref{sig})$.  
This is  an interesting phenomena describing the influences of two fluids on each other somehow and it should be emphasized that $\l^{*}=5$ is the same as that of the  isentropic Navier-Stokes equations \cite{NNU} for $u_{+}^{2}=p'_{1}(\r_{+})=p'_{2}(n_{+})$.

\vspace{2ex}
\noindent{\textbf{Notation.}}      
We denote by $\| \cdot\|_{L^{p}}$ the norm of the usual Lebesgue space $L^{p}=L^{p}({\mathbb{R}_{+}})$,  $1 \leq p \leq \infty$. And if $p=2$, we write $\| \cdot \|_{L^{p}(\mathbb{R}_{+})}=\|\cdot\|$.
 $H^{s}(\mathbb{R}_{+})$ stands for the standard $s$-th  Sobolev space over $\mathbb{R}_{+}$ equipped with its norm
 \be \| f \|_{H^{s}(\mathbb{R}_{+})}=\| f \|_{s}:=( \sum_{i=0}^{s} \| \partial^{i}f \|^{2})^{\frac{1}{2}}. \nonumber\ee $C([0,T]; H^{1}(\mathbb{R}_{+}))$ represents the space of continuous functions on the interval $[0,T]$ with values in $H^{s}(\mathbb{R}_{+})$. 
 $L^{2}([0,T]; \mathcal{B}) $ denotes the space of $L^{2}$ functions on the interval $[0,T]$ with values in  Banach space $\mathcal{B}$. 
 For
 a scalar function $W(x)>0$, the  weighted $L^{2}(\mathbb{R}_{+})$ and $H^{1}(\mathbb{R}_{+})$ spaces are 
defined as follows:
\begin{equation}
\begin{aligned}
L^{2}_{W}(\mathbb{R}_{+}):=&
\left\{\begin{matrix}
~f \in L^{2}(\mathbb{R}_{+})~|
\end{matrix}\right.~\| f\|_{L^{2}_{W}}:=( \int_{\mathbb{R}_{+}} W (x)f^{2} dx )^{\frac{1}{2}}<+\infty 
\left.
\begin{matrix}~
\end{matrix}\right\},
\\
H^{1}_{W}(\mathbb{R}_{+}):=&
\left\{\begin{matrix}
~f \in H^{1}(\mathbb{R}_{+})~|
\end{matrix}\right.~\| f\|_{H^{1}_{W}}:=(\sum_{i=0}^{1} \|\partial^{i}f \|^{2}_{L^{2}_{W}} )^{\frac{1}{2}}<+\infty \left.\begin{matrix}~
\end{matrix}\right\}.
\nonumber
\end{aligned}
\end{equation}
 For a scalar function $W_{a,\nu}:=(1+x)^{\nu}$ with $\nu\geq0$, we denote $\| f\|_{a, \nu}:=\|  (1+x)^{\frac{\nu}{2}}f \|$.

The rest of this paper will be organized as follows. We prove the existence and uniqueness of  steady-states in Section \ref{sec:2}, get the nonlinear stability of  steady-states  in Section \ref{sec:$M_{+} >1$} for supersonic, sonic and subsonic cases, and obtain  convergence rates of  steady-states for the supersonic flow  in Subsection \ref{sec:$M'_{+} >1$} and the sonic flow in  Subsection  \ref{sec: $M'_{+}=1$}.
\section{Existence of Steady-State}
\label{sec:2}
We prove Theorem \ref{thm stationary s} on the  existence and uniqueness of  steady-states to the BVP $(\ref{stationary f})$-$(\ref{stationary boundary c})$  with	$u_{+}<0$ and $\delta$ sufficiently small as follows.
In order  to apply the center manifold theory \cite{C},   it is necessary 
to  get the  bounds  of $\wt{u}_{x}(0)$ or $\wt{v}_{x}(0)$. 
\begin{lem}
	\label{lem u_{x}(0)}
Assume that  $u_{+}<0$   and $\delta=|u_{-}-u_{+}|$ hold with $\delta$ sufficiently small. 	Then the steady-state  $(\wt{\r},\wt{u},\wt{n},\wt{v})$ to the  BVP $({\ref{stationary f}})$-$(\ref{stationary boundary c})$ satisfies
	\be
	|\wt{u}_{x}(0)|\leq C|u_{-}-u_{+}|, \quad|\wt{v}_{x}(0)|\leq C|u_{-}-u_{+}|,
	\ee
	 where $C>0$ is 	a positive constant.
\end{lem}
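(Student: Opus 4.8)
The plan is to extract a closed finite-dimensional ODE system for the steady-state on a neighborhood of the boundary and then read off the needed bound on $\wt u_x(0)$ (hence $\wt v_x(0)$) from the continuity of the steady-state in the parameter $\delta=|u_--u_+|$. First I would use the conservation relations $(\ref{widetilde{n rho u}})$, namely $\wt u=\frac{\rho_+}{\wt\rho}u_+$ and $\wt v=\frac{n_+}{\wt n}u_+$, to eliminate $\wt\rho,\wt n$ in favor of $\wt u,\wt v$ (both are bounded away from zero and from infinity by the far-field values once $\delta$ is small, so this inversion is legitimate). Substituting into the two second-order equations $(\ref{stationary f})_2$ and $(\ref{stationary f})_4$, and introducing $\wt w:=\wt u_x$ and the analogous $\wt z:=\wt v_x$ if needed, the system $(\ref{stationary f})$ becomes an autonomous first-order ODE system in the variables $(\wt u,\wt w,\wt v)$ (or $(\wt u,\wt w,\wt v,\wt z)$) — this is exactly the reduction the introduction announces as $(\ref{sf bar})$. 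The equilibrium point is $(\wt u,\wt w,\wt v)=(u_+,0,u_+)$, corresponding to the spatial far field.

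Next I would observe that the linearization of this autonomous system at the equilibrium has a stable subspace (the introduction notes that the drag force forces at least one eigenvalue to have negative real part, and in the supersonic/subsonic case $M_+\neq1$ the relevant eigenvalues are hyperbolic; the sonic case needs the center manifold as in \cite{C}). Since the boundary condition is $(\wt u,\wt v)(0)=(u_-,u_-)$ and $\wt u-u_+,\wt v-u_+$ are of size $O(\delta)$, the trajectory that realizes the steady-state must lie on the (center-)stable manifold of the equilibrium, and its initial point is the intersection of that manifold with the set $\{\wt u(0)=\wt v(0)=u_-\}$. The stable/center-stable manifold is a graph $\wt w=\Phi(\wt u,\wt v)$ (or a graph over the center-stable coordinates) with $\Phi$ smooth and $\Phi(u_+,u_+)=0$, $D\Phi(u_+,u_+)$ bounded. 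Evaluating at the boundary, $\wt u_x(0)=\wt w(0)=\Phi(u_-,u_-)$, and by the mean value theorem
\[
|\wt u_x(0)|=|\Phi(u_-,u_-)-\Phi(u_+,u_+)|\le C\big(|u_--u_+|+|u_--u_+|\big)\le C|u_--u_+|.
\]
The bound on $|\wt v_x(0)|$ follows identically (or directly from the corresponding component of the same manifold graph).

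The main obstacle I anticipate is making the reduction to the autonomous system $(\ref{sf bar})$ and the invariant-manifold argument legitimate \emph{before} the full existence theory of Theorem \ref{thm stationary s} is in hand — i.e. this lemma is used as an input to that theorem, so one must argue locally and self-containedly. Concretely, one must: (i) justify that any solution of the BVP stays in the domain where $\wt\rho,\wt n$ are recovered smoothly from $\wt u,\wt v$ and where the center-stable manifold is defined (a smallness-of-$\delta$ argument, using that $\wt u(0)=u_-=u_++O(\delta)$ and a priori continuity of $\wt u,\wt v$ near the boundary); (ii) handle the sonic case $M_+=1$, where the equilibrium is non-hyperbolic and the relevant object is the \emph{center-stable} manifold — here the Lipschitz/graph bound on $\Phi$ near the equilibrium still holds by the center manifold theorem \cite{C}, with the constant $C$ uniform for $\delta$ small. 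A secondary point is that the eigenvalue of interest genuinely has nonzero (negative) real part in the $M_+\ne1$ case and is the slowest-decaying relevant mode — but since we only need an upper Lipschitz bound on the manifold, not a precise rate, it suffices that the manifold is a smooth graph through the equilibrium, which the cited theory provides once the spectral splitting (stable ⊕ center) is established, as the introduction indicates is done for $(\ref{sf bar})$.
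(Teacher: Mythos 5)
Your proposal has a genuine gap, and the gap is essentially a circularity with respect to how the paper is organized. The entire purpose of Lemma \ref{lem u_{x}(0)} is to supply the smallness of the second component $\wt{u}_{x}(0)$ of the boundary datum $\b{U}_{-}=(u_{-}-u_{+},\wt{u}_{x}(0),u_{-}-u_{+})^{\rm T}$ for the reduced autonomous system $(\ref{sf bar})$, so that the local center/stable manifold theory of \cite{C} can then be applied. Your argument invokes that very theory: you place the phase point at $x=0$ on the local (center-)stable manifold, written as a graph $\wt{w}=\Phi(\wt{u},\wt{v})$, and apply the mean value theorem. But that manifold is only defined, and is only a Lipschitz graph, in a small neighborhood of the equilibrium $(u_{+},0,u_{+})$, and membership of $(\b{u},\b{w},\b{v})(0)$ in that neighborhood requires exactly the bound $|\wt{u}_{x}(0)|\leq C\delta$ you are trying to prove. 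The boundary and far-field conditions $(\ref{stationary boundary c})$ only give smallness of $\b{u}(0)$ and $\b{v}(0)$; they give no control of $\b{w}(0)$, they do not prevent the trajectory from making a large excursion on a compact $x$-interval before converging (so the standard fact that orbits remaining near the equilibrium for all forward ``time'' lie on the local center-stable manifold applies only from some $x_{0}>0$ on, and backward continuation to $x=0$ may leave the region where the graph representation is valid), and they do not even assert $\wt{u}_{x},\wt{v}_{x}\to 0$ as $x\to\infty$, which your phase-space convergence claim uses. Your proposed fix in point (i) (smallness of $\wt{u}(0)-u_{+}$, $\wt{v}(0)-u_{+}$ plus continuity) does not close this. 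A further structural problem: in the subsonic case $M_{+}<1$ one has ${\rm Re}\,\l_{1}>0$, ${\rm Re}\,\l_{2}>0$, $\l_{3}<0$, so the relevant stable manifold is one-dimensional and cannot be written as a graph of $\wt{w}$ over the two variables $(\wt{u},\wt{v})$ as you assume.

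The paper's proof avoids all of this with a short, self-contained energy argument that uses only $(\ref{widetilde{n rho u}})$ and the rewritten momentum equations $(\ref{sf})$: (a) adding the two equations of $(\ref{sf})$ and integrating over $(0,+\infty)$ gives the linear relation $\mu\wt{u}_{x}(0)+\frac{n_{+}u_{+}}{u_{-}}\wt{v}_{x}(0)=O(\delta)$; (b) multiplying $(\ref{sf})_{1}$ by $\wt{u}$, $(\ref{sf})_{2}$ by $\wt{v}$, adding and integrating, and using (a), gives $\int_{0}^{+\infty}\bigl(\mu\wt{u}_{x}^{2}+\frac{n_{+}u_{+}}{\wt{v}}\wt{v}_{x}^{2}\bigr)dx+\int_{0}^{+\infty}\frac{n_{+}u_{+}}{\wt{v}}(\wt{v}-\wt{u})^{2}dx=O(\delta^{2})$, all terms being nonnegative because $u_{+}<0$ and $\wt{v}<0$; (c) multiplying $(\ref{sf})_{2}$ by $\wt{v}_{x}/\wt{v}$ and integrating produces the boundary term $-n_{+}u_{+}\wt{v}_{x}^{2}(0)/(2u_{-}^{2})$ on the left, and the right-hand side is absorbed using (b), yielding $|\wt{v}_{x}(0)|\leq C\delta$ and then $|\wt{u}_{x}(0)|\leq C\delta$ from (a). If you wish to keep a dynamical-systems proof, you would first need an a priori bound of precisely this kind, which defeats the purpose; so the energy route is not merely different but logically necessary at this stage of the paper.
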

\begin{proof}
	Due to $\wt{\r}=\f{\r_{+}u_{+}}{\wt{u}}$ and $\wt{n}=\f{n_{+}u_{+}}{\wt{v}}$,  we have
	\be
	\left\lbrace 
	\begin{aligned}
		&(\r_{+}u_{+}\wt{u} +A_{1}\r^{\gm}_+ u^{\gm}_+ \wt{u}^{-\gm})_{x}
		=(\mu \wt{u}_{x})_{x}  +\f{n_+ u_{+}}{\wt{v}}(\wt{v}-\wt{u}),
		\\
		&
		(n_{+}u_{+}\wt{v} +A_{2}n^{\a}_{+}u^{\a}_{+} \wt{v}^{-\a})_{x}
		=(n_+ u_+ \f{\wt{v}_{x}}{\wt{v}})_x -\f{n_+ u_{+}}{\wt{v}}(\wt{v}-\wt{u}).
	\end{aligned}
	\right. 
	\label{sf}
	\ee
Adding  $(\ref{sf})_{1}$ to $(\ref{sf})_{2}$ and	integrating the resulted equation over $(0,+\infty)$ lead to
	\be 
	\mu \wt{u}_{x}(0)+\f{n_{+}u_{+}}{u_{-}}\wt{v}_{x}(0)=\f{1}{u_+}[(\r_{+}+n_{+}) u^{2}_+ - (A_{1}\gm \r^{\gm}_+ +A_{2} \a n^{\a}_+)] (u_{-} -u_{+})+O(|u_{-}-u_{+}|^{2}).
	\label{wt u v}
	\ee 
With the help of $(\ref{wt u v})$, we	multiply  $(\ref{sf})_{1}$ by $\wt{u}$, $(\ref{sf})_{2}$ by $\wt{v}$ respectively,  then integrate  the summation of the resulted equations over $(0, +\infty)$ to gain 
	\be 
	\begin{aligned}
		&
		\int_{0}^{+\infty} (\mu \wt{u}_{x}^{2}+n_+ u_+\f{\wt{v}^{2}_{x}}{\wt{v}})dx
		+\int_{0}^{+\infty}\f{n_+ u_+}{\wt{v}}(\wt{v}-\wt{u})^{2}dx
		\\
		=&
		-u_{-}[\mu \wt{u}_{x}(0)+n_{+}u_{+}\f{\wt{v}_{x}(0)}{u_{-}}]+
		[(\r_{+}+n_{+})u_{+}^{2}-(A_{1}\gm \r^{\gm}_{+}+A_{2}\a n^{\a}_{+} )](u_{-}-u_{+})+O(|u_{-}-u_{+}|^{2})
	\\
=	&
		O(|u_{-}-u_{+}|^{2}).
		\label{wt v-u}
	\end{aligned}
	\ee 
	Multiplying $(\ref{sf})_{2}$ by $\f{\wt{v}_{x}}{\wt{v}}$ and then integrating the resulted equation over $(0, \infty)$ yield 
	\be
	-n_+ u_+\f{\wt{v}^{2}_{x}(0)}{2u^{2}_{-}}+\int_{0}^{+\infty}A_{2}\a n^{\a}_{+}\f{u_{+}^{\a}}{\wt{v}^{\a+2}}\wt{v}^{2}_{x}dx
	= \int_{0}^{+\infty}\f{n_+ u_+}{\wt{v}}(\wt{v}-\wt{u})\f{\wt{v}_{x}}{\wt{v}}dx
	+\int_{0}^{+\infty}n_+ u_+\f{\wt{v}^{2}_{x}}{\wt{v}}dx
	\label{sf'b3}
	\ee
	We estimate terms in the right hand side of $(\ref{sf'b3})$. With $\inf\limits_{x\in\mb{R}_{+}} \wt{n}>0$, $\a\geq 1$ and  $(\ref{wt v-u})$, we have 
	\be 
	\begin{aligned}
	&
	\int_{0}^{+\infty}\f{n_+ u_+}{\wt{v}}(\wt{v}-\wt{u})\f{\wt{v}_{x}}{\wt{v}}dx
	+\int_{0}^{+\infty}n_+ u_+\f{\wt{v}^{2}_{x}}{\wt{v}}dx
	\\
	\leq&
	 C\int_{0}^{+\infty}\f{n_{+} u_+}{\wt{v}}(\wt{v}-\wt{u})^{2}dx+ \f{1}{4}\int_{0}^{+\infty}A_{2}\a n^{\a}_{+}\f{u_{+}^{\a}}{\wt{v}^{\a+2}}\wt{v}^{2}_{x}dx
	 \\
	\leq &
	C|u_{-}-u_{+}|^{2}+\f{1}{4}\int_{0}^{+\infty}A_{2}\a n^{\a}_{+}\f{u_{+}^{\a}}{\wt{v}^{\a+2}}\wt{v}^{2}_{x}dx.
		\label{sf''b2}
	\end{aligned}
	\ee 
	Combining $(\ref{sf'b3})$ and  $(\ref{sf''b2})$, we  get 
	\be
	|\wt{v}_{x}(0)|\leq C|u_{-}-u_{+}|, \quad |\wt{u}_{x}(0)|\leq C|u_{-}-u_{+}| .
	\label{sf1}
	\ee
\end{proof}
Then,  we can prove Theorem \ref{thm stationary s} with the above lemma.
For $(\ref{stationary f})_{2}$ and $(\ref{stationary f})_{4}$,  using $\wt{\r}=\f{\r_{+}u_{+}}{\wt{u}}$, $\wt{n}=\f{n_{+}u_{+}}{\wt{v}}$ and  integrating the summation of  $(\ref{stationary f})_{2}$ and $(\ref{stationary f})_{4}$ over $(x,+\infty)$,   we have
\begin{equation}
\left\lbrace 
\begin{split}
&\wt{v}_{x}=\f{\wt{v}}{n_{+}u_{+}}[\r_{+}u_{+}(\wt{u}-u_{+})+A_{1}\r^{\gm}_{+}(\f{u^{\gm}_{+}}{\wt{u}^{\gm}}-1)+n_{+}u_{+}(\wt{v}-u_{+})+A_{2}n^{\a}_{+}(\f{u^{\a}_{+}}{\wt{v}^{\a}}-1)-\mu \wt{u}_{x}],\\&
\wt{u}_{xx}=\f{1}{\mu}[(\r_{+}u_{+}-A_{1}\r^{\gm}_{+}u^{\gm}_{+}\wt{u}^{-\gm-1})\wt{u}_{x}-n_{+}u_{+}(1-\f{\wt{u}}{\wt{v}})].
\end{split}
\label{2sf}
\right. 
\end{equation}
Define $\wt{w}:=\wt{u}_{x}$ and  $\bar{U}:=(\bar{u},\bar{w},\bar{v})^{\rm T}:=(\wt{u}-u_{+},\wt{w},\wt{v}-u_{+})^{\rm T}$. The system $(\ref{2sf})$ can be  reformulated into the autonomous  system as follows
\be\left\lbrace 
\begin{aligned}&
	\b{U}_{x}=\boldsymbol{J_{+}}\b{U}  +(0, \b{g}_{2}(\b{U}),\b{g}_{3}(\b{U}))^{\rm T},
	\\
	&
	\b{U}_{-}:= (\bar{u},\bar{w}, \bar{v})^{\rm T}(0)
	=(u_{-}-u_{+},\wt{u}_{x}(0),u_{-}-u_{+})^{\rm T}, 
	~ \lim_{x\to\infty} \bar{U}^{\rm T}=(0,0,0),
\end{aligned}\right. 
\label{sf bar} \ee 
where
\begin{align}
\boldsymbol{J_{+}}=&
\begin{pmatrix}
	0 &1  &0 \\ 
	\f{n_{+}}{\mu }& \frac{\r_{+}u_{+}^{2}-A_{1}\gamma \rho^{\gamma}_{+}}{\mu u_{+}} &-\f{n_{+}}{\mu} \\ 
	\frac{\r_{+}u_{+}^{2}-A_{1}\gamma \rho^{\gamma}_{+}  }{n_{+}u_{+}}&-\f{\mu}{n_{+}}  & \frac{n_{+}u_{+}^{2}-A_{2}\alpha n^{\alpha}_{+} }{n_{+}u_{+}}
\end{pmatrix},
\end{align}
\begin{align}
\bar{g}_{2}(\b{U}) =&
\f{1}{2}(2\f{n_{+}}{\mu}\f{1}{u_{+}}\bar{v}^{2}-2\f{n_{+}}{\mu}\f{1}{u_{+}}\bar{u}\bar{v}+2\f{A_{1}\gm (\gm +1)\r^{\gm}_{+}}{\mu u^{2}_{+}}\bar{w}\bar{u})+O(|\b{U}|^{3}),
\label{g2}
\\
	\bar{g}_{3}(\b{U})	=&
	\f{1}{2}[ \f{A_{1}\gm(\gm+1)\r^{\gm}_{+}}{n_{+}u^{2}_{+}}\bar{u}^{2}
	+2\f{\r_{+}u_{+}^{2}-A_{1}\gm\r^{\gm}_{+}}{n_{+}u^{2}_{+}}\bar{v} \bar{u} +(2\f{n_{+}u_{+}^{2}-A_{2}\a n^{\a}_{+}}{n_{+}u^{2}_{+}}\notag
	\\
	&
	+\f{A_{2}\a(\a+1)n^{\a}_{+}}{n_{+}u^{2}_{+}})\bar{v}^{2}
	-2\f{\mu}{n_{+}u_{+}}\bar{w}\bar{v}]+O(|\b{U}|^{3}).
\label{g}
\end{align}
Three eigenvalues $\l_{1}, \l_{2}, \l_{3}$ of matrix $\boldsymbol{J_{+}}$ satisfy
\be\left\lbrace
\begin{aligned}
	&\l_{1}\l_{2}\l_{3}=-\f{(\r_{+}+n_{+})u^{2}_{+}-(A_{1}\gm \r^{\gm}_{+}+A_{2}\a n^{\a}_{+} )}{\mu u_{+}},
	\\&  
	\l_{1}+\l_{2}+\l_{3}=\f{\r_{+}u^{2}_{+}-A_{1}\gm \r^{\gm}_{+}}{\mu u_{+}}+\f{n_{+}u^{2}_{+}-A_{2}\a n^{\a}_{+}}{n_{+} u_{+}},
	\\&
	\l_{1}\l_{2}+\l_{1}\l_{3}+\l_{2}\l_{3}=\r_{+}\f{(u_{+}^{2}-A_{1}\gm\r_{+}^{\gm-1})(u_{+}^{2} -A_{2}\a n_{+}^{\a-1})}{\mu u^{2}_{+}}-1-\f{n_{+}}{\mu}.
\end{aligned}\right. \label{l f1}\ee 
If $M_{+}>1$, it is easy to obtain $\l_{1}\l_{2}\l_{3}>0$ and  $u_{+}^{2}>\min\{A_{1}\gm\r_{+}^{\gm-1},A_{2}\a n_{+}^{\a-1}\}$.
Without loss of generality, we assume $A_{1}\gm\r_{+}^{\gm-1}\geq A_{2}\a n_{+}^{\a-1}$. Moreover, we have 
\be
 \l_{1}+\l_{2}+\l_{3}<0 {~\rm for~}u_{+}^{2}>A_{1}\gm\r_{+}^{\gm-1},\quad 
 \l_{1}\l_{2}+\l_{1}\l_{3}+\l_{2}\l_{3}<0 {~\rm for~}  A_{2}\a n_{+}^{\a-1}\leq u^{2}_{+}\leq A_{1}\gm \r_{+}^{\gm-1},
 \ee
  which can imply  ${\rm Re}\l_{1}<0, ~{\rm Re}\l_{2}<0$ and $\l_{3}>0$ for $M_{+}>1$.
Using similar arguments, we have the following results:
\be 
\left\lbrace 
\begin{split}
	&{\rm if}~ M_{+}>1,{\rm then}~ {\rm Re}\l_{1}<0, {\rm Re}\l_{2}<0, \l_{3}>0,\\&
	{\rm if}~ M_{+}<1, {\rm then}~ {\rm Re}\l_{1}>0, {\rm Re}\l_{2}>0, \l_{3}<0, \\&
	{\rm if} ~M_{+}=1, {\rm then} ~\l_{1}>0, \l_{2}<0, \l_{3}=0. 
\end{split}
\right. 
\label{M_+}
\ee
Then, applying the center manifold theory \cite{C}, it is not difficult to show the supersonic or subsonic case $M_{+}\neq 1$ in Theorem \ref{thm stationary s} if $\delta$ is small.
Finally, we prove the sonic case $M_{+}=1$ in Theorem \ref{thm stationary s} which implies $\l_{1}>0, \l_{2}<0, \l_{3}=0$. 
The eigenvectors $r_{1}, r_{2},r_{3}$ of $\l_{1},\l_{2},\l_{3}$ are obtained  respectively as follows
\be
r_{1}=
\begin{pmatrix}
	1\\ 
	\l_{1}\\ 
	-\f{\mu}{n_{+}}(\l_{1}^{2}-\f{\r_{+}u^{2}_{+}-A_{1}\gm\r^{\gm}_{+} }{\mu u_{+}}\l_{1})+1
\end{pmatrix},
r_{2}=
\begin{pmatrix}
	1\\ 
	\l_{2}\\ 
	-\f{\mu}{n_{+}}(\l_{2}^{2}-\f{\r_{+}u^{2}_{+}-A_{1}\gm\r^{\gm}_{+} }{\mu u_{+}}\l_{2})+1
\end{pmatrix},
r_{3}=
\begin{pmatrix}
	1\\ 
	0\\ 
	1
\end{pmatrix}.
\label{ev}
\ee
Define the matrix $\boldsymbol{P_{1}}:=[r_{1},r_{2},r_{3}]$ and  take  a linear transformation $Z:=(z_{1},z_{2},z_{3})^{\rm T}=\boldsymbol{P_{1}}^{-1}\b{U}$.
With $(\ref{g2})$ and $(\ref{g})$,
the system $(\ref{sf bar})$  can be reformulated as follows
\be 
\left\lbrace 
\begin{aligned}
	&\frac{d}{dx}\begin{pmatrix}
		z_{1}\\ 
		z_{2}\\ 
		z_{3}
	\end{pmatrix}=\begin{pmatrix}
		\lambda_{1} & * &0 \\ 
		0& \lambda_{2} &0 \\ 
		0& 0 & \lambda_{3}
	\end{pmatrix}
	\begin{pmatrix}
		z_{1}\\ 
		z_{2}\\ 
		z_{3}
	\end{pmatrix}
	+
	\begin{pmatrix}
		g_{1}(z_{1},z_{2},z_{3})\\ 
		g_{2}(z_{1},z_{2},z_{3}) \\ 
		g_{3}(z_{1},z_{2},z_{3})
	\end{pmatrix},\\&
	(z_{1},z_{2},z_{3})(0)=(z_{1-},z_{2-},z_{3-})=(\boldsymbol{P_{1}}^{-1}\b{U}_{-})^{\rm T},\quad \lim_{x\to \infty}(z_{1},z_{2},z_{3})=(0,0,0),
	\label{sf z}
\end{aligned}
\right. 
\ee
where nonlinear functions $g_{i}(i=1,2,3)$  are denoted by
\be
\begin{pmatrix}
	g_{1}(z_{1},z_{2},z_{3})\\ 
	g_{2}(z_{1},z_{2},z_{3}) \\ 
	g_{3}(z_{1},z_{2},z_{3})
\end{pmatrix}
=\boldsymbol{P_{1}}^{-1}
\begin{pmatrix}
	0\\ 
	\bar{g}_{2}(\bar{u},\bar{w},\bar{v})\\ 
	\bar{g}_{3}(\bar{u},\bar{w},\bar{v})
\end{pmatrix}.
\label{bg}
\ee
With the help of  the  manifold theory \cite{C}, there exist a local center manifold $W^{c}(0,0,0)$ and a local stable manifold $W_{3}^{s}(0,0,0)$ 
\be
W^{c}(0,0,0)=\{(z_{1},z_{2},z_{3})~ |~ z_{1}=f^{c}_{1}(z_{3}), z_{2}=f^{c}_{2}(z_{3}),  |z_{3}|~{\rm sufficient ~small}   \},
\label{rd}
\ee
\be
W_{3}^{s}(0,0,0)=\{(z_{1},z_{2},z_{3})~ |~ z_{1}=f^{s}_{1}(z_{2}), z_{3}=f^{s}_{2}(z_{2}),  |z_{2}|~{\rm sufficient ~small}   \},
\ee 
where $f^{c}_{i}, f^{s}_{i}, i=1,2$ are smooth functions and $f^{c}_{i}(0)=0,~ Df^{c}_{i}(0)=0,~f^{s}_{i}(0)=0,~ Df^{s}_{i}(0)=0,~i=1,2$.
Using  $\bar{U}=PZ$, $(\ref{g3})$-$(\ref{g})$, and $(\ref{bg})$, we   gain
\be
\b{g}_{3}(z_{3})=az^{2}_{3}+O(|z_{1}|^{2}+|z_{2}|^{2}+|z_{3}|^{3}+|z_{1}z_{3}|+|z_{2}z_{3}|),
\label{g3}
\ee
where 
\be 
a=\f{A_{1}\gm(\gm+1) \r^{\gm}_{+}+A_{2}\a (\a+1)n^{\a}_{+}}{2u_{+}^{2}(1+b^{2})(\mu+n_{+})},\quad b:=\f{\r_{+}(u_{+}^{2}-p'_{1}(\r_{+}))}{|u_{+}|\sqrt{(\mu+n_{+})n_{+}}}.
\label{a}
\ee
Therefore, the system $(\ref{sf z})$  can be reformulated as follows
\be
\left\lbrace 
\begin{aligned}
	&z_{1x}=\l_{1}z_{1}+O(|Z|^{2}),
	\\
	&
	z_{2x}=\l_{2}z_{2}+O(|Z|^{2}),
	\\
	& 
	z_{3x}=az_{3}^{2}+O(|z_{1}|^{2}+|z_{2}|^{2}+|z_{3}|^{3}+|z_{1}z_{3}|+|z_{2}z_{3}|),
	\\
	&
	(z_{1},z_{2},z_{3})(0):=(z_{1-},z_{2-},z_{3-})=(\boldsymbol{P_{1}}^{-1}\b{U}_{-})^{\rm T}, ~ \lim_{x\to \infty}(z_{1},z_{2},z_{3})=(0,0,0).
	\label{z}
\end{aligned}
\right.
\ee
Let $\sigma_{1}(x)$ be a solution to $({\ref{z}})_{1}$ restricted on the local center manifold satisfying the equation 
\be 
\sigma_{1x}=a\sigma_{1}^{2}+O(\sigma_{1}^{3}),\quad \sigma_{1}(x) \to 0 ~{\rm as}~ x \to +\infty.
\label{sg}
\ee 
which implies  that  there exists the  monotonically increasing solution $\sigma_{1}(x)<0$ to $(\ref{sg})$   if  $\sigma_{1}(0)<0$ holds  and $|\sigma_{1}(0)|$  is sufficiently small.  
Therefore, if the initial data $(z_{1-},z_{2-},z_{3-})$ belongs to the region $\mathcal{M}\subset \mb{R}^{3}$ associated to the local stable manifold and the local center manifold, then  we have 
\be 
\left\lbrace 
\begin{aligned}
	&z_{i}=O(\sigma_{1}^{2})+O(\delta e^{-cx}),~i=1,2,
	\\&
	z_{3}=\sigma_{1}+O(\delta e^{-cx}),
\end{aligned}
\right. 
\ee 
with $z_{3-}<0$, the smallness of $|(z_{1-},z_{2-},z_{3-})|$ and 
\be
c\f{\delta}{1+\delta x}\leq |\sigma_{1}| \leq C\f{\delta}{1+\delta x},\quad   |\partial ^{k}\sigma_{1}|\leq C \f{\delta^{k+1}}{(1+\delta x)^{k+1}}, \quad C>0,~~k=0,1,2,3.
\ee  
Due to $\sg_{1}(x)\leq 0$, we define \be \sigma(x):=-\sigma_{1}, \ee
which satisfies
\be 
\sg_{x}=-a\sg^{2}+O(|\sg|^{3}),\quad  \sg\to 0 ~{\rm as}~x\to +\infty
\label{sg0}
\ee 

It is easy to get
\be 
|\partial_{x}^{k}(\wt{\r}-\r_{+},\wt{u}-u_{+}, \wt{n}-n_{+}, \wt{v}-u_{+})|\leq C  \f{\delta^{k+1}}{(1+\delta x)^{k+1}},\quad C>0,~~k=0,1,2,3,
\ee 
and
\be (\wt{u}-u_{+},\wt{v}-u_{+})=(-\sg(x),-\sg(x))+O(|\sg(x)|^{2}),\quad 
(\wt{u}_{x}, \wt{v}_{x})=(a\sigma^{2}(x), a\sigma^{2}(x) )+O(|\sigma|^{3}),
\ee
with the help of $\bar{U}=\boldsymbol{P}Z$ and $(\ref{ev})$. 
%

\section{Nonlinear stability of steady-states}

\label{sec:$M_{+} >1$}
%
The  function space $Y(0,T)$ for $T>0$ is denoted by
\begin{equation}
\begin{aligned}
Y(0,T):=\{
~(\h,\s,\b{\h},\b{\s})~ |~&
( \h,\s,\b{\h},\b{\s}) \in C([0,T]; H^{1}(\mb{R}_{+})), 
\\
&
( \h_{x},\b{\h}_{x}) \in L^{2}([0,T]; L^{2}(\mb{R}_{+})), ~(\psi_{x},\b{\s}_{x}) \in L^{2}([0,T]; H^{1}(\mb{R}_{+}))~
\}.
\end{aligned}
\end{equation}
Let
\be
\phi=\r-\wt{\r},\quad \psi=u-\wt{u},\quad \b{\h}=n-\wt{n},\quad \b{\s}=v-\wt{v}.
\ee 
Then the perturbation $(\phi,\psi,\b{\h},\b{\s})$ satisfies the following system
\be 
\left\lbrace 
\begin{aligned}
	&	\h_{t}+u\h_{x}+\r\s_{x}=-(\s\wt{\r}_{x}+\h\wt{u}_{x}),
	\\
	&
\s_{t}+u\s_{x}+\f{p'_{1}(\r)}{\r}\h_{x}-\f{\mu\s_{xx}}{\r}-\f{n(\b{\s}-\s)}{\r}
=F_{1},
\\
&
\b{\h}_{t}+v\b{\h}_{x}+n\b{\s}_{x}=-(\b{\s}\wt{n}_{x}+\b{\h}\wt{v}_{x}),
\\
&
\b{\s}_{t}+v\b{\s}_{x}+\f{p'_{2}(n)}{n}\b{\h}_{x}-\f{(n\b{\s}_{x})_{x}}{n}+(\b{\s}-\s)
	=F_{2},
	\label{f1}
\end{aligned}
\right. 
\ee 
where 
\be 
F_{1}=-[-\mu(\f{1}{\r}-\f{1}{\wt{\r}})\wt{u}_{xx}+\s\wt{u}_{x}+(\f{p'_{1}(\r)}{\r}-\f{p'_{1}(\wt{\r})}{\wt{\r}}) \wt{\r}_{x}-(\f{n}{\r}-\f{\wt{n}}{\wt{\r}})(\wt{v}-\wt{u})],
\label{F1}
\ee 
\be 
F_{2}=-[-(\f{1}{n}-\f{1}{\wt{n}})(\wt{n}\wt{v}_{x})_{x}-\f{(\b{\h}\wt{v}_{x})_{x}}{n}+\b{\s}\wt{v}_{x}+(\f{p'_{2}(n)}{n}-\f{p'_{2}(\wt{n})}{\wt{n}}) \wt{n}_{x}].~~~~~~~
\label{F2}
\ee 
The initial and boundary conditions to the system $(\ref{f1})$ satisfy
\be 
(\phi,\psi,\b{\h},\b{\s})(0,x):=(\h_{0},\s_{0},\b{\h}_{0},\b{\s}_{0})=(\r_{0}-\wt{\r},u_{0}-\wt{u},n_{0}-\wt{n},v_{0}-\wt{v}),
\label{intial d1}
\ee 
\be 
\lim_{x\to \infty}(\h_{0},\s_{0},\b{\h}_{0},\b{\s}_{0})=(0,0,0,0),\quad (\s,\b{\s})(t,0)=(0,0).
\label{boundary d1}
\ee 
\begin{prop}
	\label{prop time decay}
	Assume that the same assumptions  in Theorem $\ref{thm long time behavior}$ hold. Let $(\h,\s,\b{\h},\b{\s})$ be the solution to the problem $(\ref{f1})$-$(\ref{boundary d1})$ satisfying 
	$ (\h,\s,\b{\h},\b{\s}) \in Y(0,T)  $  for any time $T>0$. Then there exist positive constants $\varepsilon>0$ and $C>0$ independent of $T$ such that if 
	\be \sup_{0\leq t\leq T}\|  (\h,\s,\b{\h},\b{\s}) (t)\|_{1}+\delta\leq \v \label{priori e}\ee
	is satisfied, then  it holds for  arbitrary $t\in [0,T]$ that 	
	\begin{equation}
	\begin{split}
	&\| (\h,\s,\b{\h},\b{\s}) \|^{2}_{1}+\int_{0}^{t} \|  (\h_{x},\s_{x},\b{\h}_{x},\b{\s}_{x}) \|^{2}d\tau+\int_{0}^{t}\| (\b{\s}-\s,\s_{xx},\b{\s}_{xx})  \|^{2}d\tau \leq C\|(\h_{0},\s_{0},\b{\h}_{0},\b{\s}_{0}) \|_{1}^{2}.
	\label{e}
	\end{split}
	\end{equation}
\end{prop}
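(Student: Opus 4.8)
\textbf{Proof strategy for Proposition \ref{prop time decay}.}

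The plan is to close the a priori estimate \eqref{e} by a standard but carefully orchestrated energy method, exploiting the three dissipative mechanisms available in the system: the viscosity $\mu\s_{xx}$ in \eqref{f1}$_2$, the viscosity $(n\b\s_x)_x$ in \eqref{f1}$_4$, and the drag-force relaxation term $\pm(\b\s-\s)$ which couples the two momentum equations. Throughout, the smallness of $\delta$ (controlling the steady state via \eqref{M_{+}>1 stationary solution d} or \eqref{sigma}) and of $\sup_{[0,T]}\|(\h,\s,\b\h,\b\s)\|_1$ (the a priori assumption \eqref{priori e}) will be used to absorb all nonlinear and inhomogeneous error terms. First I would derive the \emph{zeroth-order} estimate: multiply \eqref{f1}$_1$ by an appropriate function of $(\r,\wt\r)$ (so as to form the relative-entropy-type quantity $\r\s^2/2 + (p_1(\r)-p_1(\wt\r)-p_1'(\wt\r)\h)$ paired with the analogous pressure potential for the $n$-phase), multiply \eqref{f1}$_2$ by $\r\s$ and \eqref{f1}$_4$ by $n\b\s$, integrate over $\mathbb R_+$, and add. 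The key cancellations come from the transport structure $\p_t + u\p_x$ (resp.\ $\p_t+v\p_x$) together with the continuity equations; the drag term produces $+\int n(\b\s-\s)^2\,dx\ge 0$ after combining the two momentum contributions. The boundary terms at $x=0$ are handled using $(\s,\b\s)(t,0)=(0,0)$ from \eqref{boundary d1}, so only terms involving $\s_x(t,0),\b\s_x(t,0)$ survive, and these carry the favorable sign because $u_-<0$ (outflow); this is exactly where the outflow condition is essential and the argument differs from the inflow case.

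Next I would handle the terms involving derivatives of the steady state, i.e.\ the contributions of $\wt u_x,\wt v_x,\wt\r_x,\wt n_x$ and $F_1,F_2$ from \eqref{F1}--\eqref{F2}. For the supersonic case $M_+>1$ these all decay like $\delta e^{-c_0x}$, so by Sobolev embedding and Cauchy--Schwarz they are bounded by $C\delta$ times the energy plus $C\delta\int\|(\s_x,\b\s_x)\|^2$, hence absorbed once $\delta$ is small. For the sonic case $M_+=1$ the steady state only decays algebraically, and moreover (as the introduction emphasizes via the term $\int R_2\,dx$ in \eqref{cross t}) when $p_1'(\r_+)\ne p_2'(n_+)$ the term $\h\s(\wt\r\wt u^2+p_1(\wt\r))_x/\wt\r + \b\h\b\s(\wt n\wt v^2+p_2(\wt n))_x/\wt n$ cannot be controlled directly; here I would use the hypothesis \eqref{p small}, add to the energy identity the nonnegative multiples $\int(\r\s^2+p_1(\r)-p_1(\wt\r)-p_1'(\wt\r)\h)\wt u_x\,dx$ and the $n$-analogue (nonnegative since $\wt u_x,\wt v_x\ge 0$ by \eqref{sigma 1}), and complete the square: the leading part becomes a positive semidefinite quadratic form in two variables precisely under \eqref{p small}, as claimed around \eqref{r_1'}. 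This gives control of $\int(\s_x^2+\b\s_x^2)\,dx$ and $\int(\b\s-\s)^2\,dx$ after the viscous terms are integrated by parts.

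Then I would derive the \emph{first-order} estimate: differentiate \eqref{f1} in $x$, multiply the resulting equations by $\h_x$, $\s_x$, $\b\h_x$, $\b\s_x$ respectively (with suitable weights to keep the leading second-order operators self-adjoint), integrate and add. This produces $\frac{d}{dt}\|(\h_x,\s_x,\b\h_x,\b\s_x)\|^2$ plus the good dissipation $\int(\s_{xx}^2+\b\s_{xx}^2)\,dx$ and $\int(\b\s-\s)_x^2\,dx$, at the cost of commutator terms $u_x\h_x\s_x$, etc., which are cubic hence small, and steady-state error terms treated as above. To recover the missing dissipation $\int(\h_x^2+\b\h_x^2)\,dx$ I would use the standard trick of multiplying \eqref{f1}$_2$ by $\h_x$ (resp.\ \eqref{f1}$_4$ by $\b\h_x$) and integrating: the term $\frac{p_1'(\r)}{\r}\h_x^2$ appears with a good sign, while $\int\s_t\h_x\,dx$ is integrated by parts in $t$ and traded for $\int\s_x\h_t\,dx\sim\int\s_x(\r\s_x+\dots)$ using \eqref{f1}$_1$, which is already controlled; the boundary term at $x=0$ vanishes since $\s(t,0)=0$. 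Combining a large multiple of the zeroth-order estimate, the first-order estimate, and a small multiple of this density-gradient estimate, all integrated in $t\in[0,T]$, yields \eqref{e} after choosing $\v$ and $\delta$ small. The main obstacle I anticipate is the sonic case: making the quadratic-form argument with \eqref{p small} work simultaneously at zeroth and first order, while keeping track of the algebraic (rather than exponential) weights on the steady-state derivatives so that every error term is genuinely integrable in $x$ and absorbable — this requires the delicate bookkeeping of \eqref{sigma}--\eqref{sig} rather than any single clever estimate.
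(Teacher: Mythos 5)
Your overall architecture does match the paper's: a relative-entropy zeroth-order identity whose drag contribution yields $\int n(\bar\psi-\psi)^2dx$ (Lemma \ref{lem e0}), a density-gradient estimate obtained by pairing the differentiated continuity equations with the momentum equations tested against multiples of $\phi_x,\bar\phi_x$ (Lemma \ref{lem e1}), an estimate of $\|(\psi_x,\bar\psi_x)\|$ with dissipation of $\|(\psi_{xx},\bar\psi_{xx})\|$ from testing with $-\psi_{xx},-\bar\psi_{xx}$ (Lemma \ref{lem e1'}), and, for $M_+=1$, the combination of $R_2$ with $R_1$ using $\widetilde u_x,\widetilde v_x\ge 0$ and the semidefinite quadratic forms guaranteed by \eqref{p small}, exactly as in \eqref{r_1'}.

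There is, however, a genuine gap in how you absorb the lower-order terms weighted by derivatives of the steady state. You assert that for $M_+>1$ these terms are ``bounded by $C\delta$ times the energy plus $C\delta\int\|(\psi_x,\bar\psi_x)\|^2$, hence absorbed once $\delta$ is small.'' A bound of the form $C\delta\|(\phi,\psi,\bar\phi,\bar\psi)\|^2$ cannot be absorbed: the dissipation produced by your identities controls only $\psi_x,\bar\psi_x,\bar\psi-\psi$ (and later $\phi_x,\bar\phi_x,\psi_{xx},\bar\psi_{xx}$), never the undifferentiated $L^2$ norms, and a Gronwall argument would give growth like $e^{C\delta t}$ rather than the uniform bound \eqref{e}. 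The paper closes this loop with Lemma \ref{lem d2}: the weighted Poincar\'e/Hardy-type inequalities \eqref{d'1}--\eqref{d'2} convert $\int\delta e^{-c_0x}|f|^2dx$ (and, in the sonic case, $\int\frac{\delta^j}{(1+\delta x)^j}|f|^2dx$ with $j>2$) into $C\delta\bigl(|f(t,0)|^2+\|f_x\|^2\bigr)$, so the errors become derivative norms plus boundary traces, cf.\ \eqref{r_1} and \eqref{r_1'}. This in turn requires dissipation of the boundary traces of the \emph{density} perturbations, and here your description of the boundary terms is mistaken: since $\psi(t,0)=\bar\psi(t,0)=0$ by \eqref{boundary d1}, the viscous boundary contributions $\mu\psi\psi_x$ and $n\bar\psi\bar\psi_x$ vanish, so nothing involving $\psi_x(t,0),\bar\psi_x(t,0)$ survives at zeroth order; what survives is the convective flux $-G_1(t,0)\ge c(\phi^2(t,0)+\bar\phi^2(t,0))$ as in \eqref{l_1} (and $\phi_x^2(t,0)+\bar\phi_x^2(t,0)$ at first order, \eqref{h(0)}), and it is precisely this trace control, combined with Lemma \ref{lem d2}, that lets the $\delta$-weighted terms be absorbed. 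Without this mechanism your absorption step fails for $M_+>1$, and fails more severely for $M_+=1$, where the weights decay only algebraically and the quadratic-form argument under \eqref{p small} still leaves cubic remainders of the type $\int\frac{\delta^3}{(1+\delta x)^3}(\phi^2+\psi^2+\bar\phi^2+\bar\psi^2)dx$ that must be handled through \eqref{d'2} and the boundary terms.
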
  
With the help of  $(\ref{priori e})$, it is easy to verify  the following Sobolev inequality 
\begin{equation}
~~~~~\| (\h,\s,\b{\h},\b{\s})(t) \|_{L^{\infty}}\leq \|(\h,\s,\b{\h},\b{\s})(t) \|_{H^{1}}\leq  \sqrt{2}\v.
\label{infty 1}
\end{equation} 
\begin{lem}[\cite{KNZ} ]
	\label{lem d2}
For any function $\s(t,\cdot)\in H^{1}(\mb{R}_{+})$, it holds 
	\begin{align}   
\int_{0}^{\infty} 	\delta	e^{-c_{0}x}|\s|^{2}dx\leq &
C\delta(|\s(t,0)|^{2}+\| \s_{x}(t) \|^{2}),	
	\label{d'1}
\\
	\int_{0}^{\infty}\f{\delta^{j}}{(1+\delta x)^{j}}|\s|^{2}dx\leq &
	C\delta^{j-2}(|\s(t,0)|^{2}+\| \s_{x}(t) \|^{2}), \quad~ j>2,
	\label{d'2}
	\end{align} 
	where $\delta>0$, $c_{0}>0$ and $C>0$ are  positive constants. 
\end{lem}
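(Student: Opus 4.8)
The plan is to establish both weighted inequalities by the same device: write the constant $1$ (or the weight itself) as a derivative of an explicit auxiliary function, integrate by parts to move the derivative onto $|\psi|^2 = 2\psi\psi_x$, and then absorb the weighted $\psi$ term that reappears by a Young inequality. For $(\ref{d'1})$, I would use that $\delta e^{-c_0 x} = -\tfrac{\delta}{c_0}\,\partial_x(e^{-c_0 x})$, so that
\be
\int_0^\infty \delta e^{-c_0 x}|\psi|^2\,dx
= \frac{\delta}{c_0}|\psi(t,0)|^2 + \frac{2\delta}{c_0}\int_0^\infty e^{-c_0 x}\psi\psi_x\,dx,
\ee
where the boundary term at $x=+\infty$ vanishes because $\psi(t,\cdot)\in H^1(\mb{R}_+)$ forces $\psi\to 0$. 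The remaining integral is bounded via Cauchy--Schwarz and Young's inequality by $\tfrac12\int_0^\infty \delta e^{-c_0 x}|\psi|^2\,dx + C\delta\int_0^\infty e^{-c_0 x}|\psi_x|^2\,dx$; the first piece is absorbed into the left-hand side, and the second is bounded by $C\delta\|\psi_x(t)\|^2$ since $e^{-c_0 x}\le 1$. This yields $(\ref{d'1})$.

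For $(\ref{d'2})$ with $j>2$, I would take the auxiliary function $\Phi(x) := (1+\delta x)^{-(j-1)}$, whose derivative is $\Phi_x = -\delta(j-1)(1+\delta x)^{-j}$, so that $\tfrac{\delta^j}{(1+\delta x)^j} = -\tfrac{\delta^{j-1}}{j-1}\,\Phi_x$. Integration by parts gives
\be
\int_0^\infty \frac{\delta^j}{(1+\delta x)^j}|\psi|^2\,dx
= \frac{\delta^{j-1}}{j-1}|\psi(t,0)|^2
+ \frac{2\delta^{j-1}}{j-1}\int_0^\infty \frac{|\psi||\psi_x|}{(1+\delta x)^{j-1}}\,dx,
\ee
again using $\psi\to0$ at infinity to kill the boundary term there. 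Since $j>2$ we have $j-1 > \tfrac{j}{2}$, so on the cross term I would split the weight as $(1+\delta x)^{-(j-1)} = (1+\delta x)^{-j/2}\cdot(1+\delta x)^{-(j/2-1)}$ and apply Young's inequality: the $(1+\delta x)^{-j}$-weighted $\psi^2$ part (times a small constant) is absorbed on the left, while the remaining factor $\delta^{j-1}(1+\delta x)^{-(j-2)} \le \delta^{j-1} \le \delta^{j-2}$ (for $\delta$ small, using $j\ge 2$) multiplies $\|\psi_x(t)\|^2$, giving the claimed bound $C\delta^{j-2}(|\psi(t,0)|^2+\|\psi_x(t)\|^2)$.

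The only delicate point is the bookkeeping of powers of $\delta$ in the cross term of $(\ref{d'2})$: one must verify that after splitting the weight the leftover negative power $-(j-2)$ of $(1+\delta x)$ is harmless (it is bounded by $1$) and that the leftover power of $\delta$ is at least $j-2$, which is exactly what the hypothesis $j>2$ guarantees. Since this lemma is quoted from \cite{KNZ}, I expect the authors simply to cite it; the argument above is the standard one-dimensional weighted Poincaré-type estimate, and no genuine obstacle arises beyond keeping track of constants.
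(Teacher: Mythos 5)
Your argument is correct, and it is genuinely a proof, whereas the paper gives none: Lemma \ref{lem d2} is simply quoted from \cite{KNZ}, exactly as you anticipated. The classical proof in \cite{KNZ} (and in the related half-space literature) goes a different way: one writes $\s(x)=\s(t,0)+\int_{0}^{x}\s_{x}\,dy$, uses Cauchy--Schwarz to get the pointwise bound $|\s(x)|^{2}\leq 2|\s(t,0)|^{2}+2x\|\s_{x}(t)\|^{2}$, and then integrates this against the weight; the elementary integrals $\int_{0}^{\infty}\delta^{j}(1+\delta x)^{-j}dx=\frac{\delta^{j-1}}{j-1}$ and $\int_{0}^{\infty}\delta^{j}x(1+\delta x)^{-j}dx=\frac{\delta^{j-2}}{(j-1)(j-2)}$ produce the powers of $\delta$ directly, and the hypothesis $j>2$ enters as the condition for convergence of the second moment integral. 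Your route instead writes the weight as an exact derivative, integrates by parts, and absorbs the weighted $\s^{2}$ term; here $j>2$ enters only through $(1+\delta x)^{-(j-2)}\leq 1$. Both are one-line-of-ideas arguments; the FTC route has the small advantage of needing no absorption and of exhibiting the sharp $\delta$-powers ($\delta^{j-1}$ on the boundary term, $\delta^{j-2}$ on $\|\s_{x}\|^{2}$) without any smallness of $\delta$, while your version is slightly more flexible (it is the same mechanism that yields the Hardy-type inequality of Lemma \ref{hardy lem}). Two bookkeeping remarks: in the Young step it is cleaner to split the $\delta$-power as $\delta^{j-1}ab=(\delta^{j/2}a)(\delta^{\f{j}{2}-1}b)\leq\f{1}{2}\delta^{j}a^{2}+\f{1}{2}\delta^{j-2}b^{2}$, which gives the absorbable term with the correct weight $\delta^{j}(1+\delta x)^{-j}$ and the $\delta^{j-2}$ factor without invoking smallness of $\delta$; and your reduction of the boundary term $\f{\delta^{j-1}}{j-1}|\s(t,0)|^{2}$ to $C\delta^{j-2}|\s(t,0)|^{2}$ does use $\delta\leq C$, which is harmless here since $\delta=|u_{-}-u_{+}|$ is taken small throughout the paper, but should be stated if the lemma is meant for arbitrary $\delta>0$.
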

	
With the  Lemma $\ref{lem d2}$, we can gain the basic $L^{2}$  energy estimates of $(\h,\s,\b{\h},\b{\s})$. 
\begin{lem}
	\label{lem e0}
	Under the same conditions in Proposition $\ref{prop time decay}$, then the solution $(\h,\s,\b{\h},\b{\s})$ to the problem $(\ref{f1})$-$(\ref{boundary d1})$ satisfies for $t \in[0,T]$ that
	\begin{equation}
	\begin{aligned}
	&
	\| (\h,\s,\b{\h},\b{\s}) \|^{2}+\int_{0}^{t}\|(\s_{x},\b{\s}_{x},\b{\s}-\s)  \|^{2}d\tau +\int_{0}^{t} |\h(t,0)|^{2}+|\b{\h}(t,0)|^{2}d\tau
	\\
\leq&
 C \| (\h_{0},\s_{0},\b{\h}_{0},\b{\s}_{0}) \|^{2}+C(\delta+\v)\int_{0}^{t}\| (\h_{x},\b{\h}_{x}) \|^{2}d\tau.
	\label{e0}
	\end{aligned}
	\end{equation}
\end{lem}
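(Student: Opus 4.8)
\textbf{Proof proposal for Lemma \ref{lem e0}.}

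The plan is to derive the basic $L^2$ energy identity by constructing an entropy-type functional adapted to the two-phase structure, namely a sum of the natural energies of each phase relative to its own steady-state. Concretely, I would multiply $(\ref{f1})_1$ by $\frac{p'_1(\r)}{\r}\h - $ (a correction making it match the momentum flux) — more precisely, work with the energy density
\be
\mc{E}:=\f{\r}{2}\s^2 + \big(Q_1(\r)-Q_1(\wt{\r})-Q_1'(\wt{\r})\h\big) + \f{n}{2}\b{\s}^2 + \big(Q_2(n)-Q_2(\wt{n})-Q_2'(\wt{n})\b{\h}\big),
\nonumber
\ee
where $Q_i$ is the standard potential with $\r Q_1''(\r)=p_1'(\r)$, etc. Differentiating in $t$ and using all four equations of $(\ref{f1})$, together with $(\ref{stationary f})$ for the steady-state terms, the convective and pressure terms combine into an $x$-derivative (a flux) plus: (i) the good viscous terms $\f{\mu}{\r}\s_x^2$ (from integrating $-\f{\mu\s_{xx}}{\r}\cdot\r\s$ by parts) and similarly $\b{\s}_x^2$, (ii) the good relaxation term $n(\b{\s}-\s)^2$ from the coupling $\pm n(\b{\s}-\s)$, (iii) boundary terms at $x=0$, and (iv) remainder terms $R$ involving the derivatives of the steady-state.

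Next I would control the pieces. The boundary integral at $x=0$: using $(\s,\b{\s})(t,0)=(0,0)$ from $(\ref{boundary d1})$, the flux terms at $x=0$ reduce to expressions like $-\f{u_-}{2}$ times a quadratic in $\h(t,0),\b{\h}(t,0)$ coming from the pressure-potential part; since $u_-<0$ this has a favorable sign and produces the $\int_0^t |\h(t,0)|^2 + |\b{\h}(t,0)|^2\,d\tau$ term on the left (after checking the quadratic form $Q_1(\r)-Q_1(\wt\r)-Q_1'(\wt\r)\h \sim \f{p_1'(\r_+)}{2\r_+}\h^2$ is positive definite for $\h$ small, which holds by $(\ref{infty 1})$). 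For the remainder $R$: each term carries a factor $\wt{u}_x$, $\wt{v}_x$, $\wt{\r}_x$, $\wt{n}_x$ or $\wt{u}_{xx}$, hence by $(\ref{M_{+}>1 stationary solution d})$/$(\ref{sigma})$ a factor bounded by $C\delta e^{-c_0 x}$ (supersonic/subsonic) or $C\f{\delta}{1+\delta x}$ (sonic), so typical terms are $\lesssim \int \delta e^{-c_0 x}(|\h|^2+|\s|^2+|\b\h|^2+|\b\s|^2)\,dx$ or the sonic analogue $\int \f{\delta^2}{(1+\delta x)^2}|\cdot|^2\,dx$ — but note the latter has exponent $j=2$, which is the borderline case not covered by $(\ref{d'2})$; I expect one uses instead that the slowly-decaying combinations appear only against $\s,\b\s$ which vanish at $x=0$, letting a Hardy-type inequality $\int \f{1}{(1+\delta x)^2}|\s|^2 \lesssim \|\s_x\|^2$ absorb them into the viscous dissipation, while the $\h,\b\h$ remainder terms genuinely decay like $\delta e^{-c_0 x}$ (the sonic slow decay enters $\wt u-u_+$, not $\wt\r_x$ in those particular terms, after using $(\ref{widetilde{n rho u}})$). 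Applying Lemma \ref{lem d2} to the exponential and $j>2$ terms gives a bound $C\delta(|\s(t,0)|^2+\|\s_x\|^2+\dots) = C\delta\|(\s_x,\b\s_x)\|^2$ since $\s(t,0)=0$, plus a harmless $C\delta\|(\h,\b\h)\|^2$ absorbed by smallness or Gronwall; the remaining $\h_x,\b\h_x$ contributions, which cannot be absorbed by any dissipation already on the left, are kept as the $C(\delta+\v)\int_0^t\|(\h_x,\b\h_x)\|^2\,d\tau$ term on the right-hand side of $(\ref{e0})$.

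Finally, I would also use the cubic/nonlinear remainders (products like $\h\s_x\cdot(\text{small})$ from expanding $F_1,F_2$ and the quasilinear coefficients) which by $(\ref{infty 1})$ are $O(\v)$ times quadratics, again absorbed into the dissipation or passed to the right-hand side as part of $C(\delta+\v)\int_0^t\|(\h_x,\b\h_x)\|^2\,d\tau$; integrating the resulting differential inequality in $t$ over $[0,T]$ yields $(\ref{e0})$. The main obstacle I anticipate is precisely the borderline decay in the sonic case: the term $(\ref{cross t})$-type contribution $\int R_2\,dx$ with weight $\sim \f{\delta}{1+\delta x}$ is not integrable enough to be handled by Lemma \ref{lem d2} alone, so the argument must exploit the sign conditions $\wt u_x,\wt v_x\ge 0$ from $(\ref{sigma 1})$ together with the extra "free" quadratic terms $\int(\r\s^2 + p_1(\r)-p_1(\wt\r)-p_1'(\wt\r)\h)\wt u_x\,dx$ (which appear with the right sign from the energy identity) to rewrite the dangerous cross terms as a positive semidefinite quadratic form, as outlined in the introduction; this is where the condition $(\ref{p small})$ is used, and is the only genuinely two-phase-specific difficulty in an otherwise standard $L^2$ estimate.
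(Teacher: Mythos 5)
Your proposal follows essentially the same route as the paper: the relative energy (your $Q_i$-form is the same functional as the paper's $\mc{E}_1+\mc{E}_2$ with $\r\Phi_1,\,n\Phi_2$), the favorable boundary sign from $u_-<0$ giving the $\h^2(t,0)+\b{\h}^2(t,0)$ control, Lemma \ref{lem d2} for the exponentially and $j>2$ algebraically weighted remainders, and, decisively, the combination of $R_1$ (using $\wt{u}_x,\wt{v}_x\ge 0$) with the cross term $R_2$ into positive semidefinite quadratic forms under $(\ref{p small})$ for the sonic case. Your intermediate Hardy-inequality detour and the claim that the $\h,\b{\h}$ remainders decay exponentially in the sonic case are not what is needed (all sonic remainders decay only algebraically, the genuinely cubic-order ones carrying weight $\delta^{3}(1+\delta x)^{-3}$ handled by $(\ref{d'2})$), but your final resolution of the borderline terms is exactly the paper's argument.
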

\begin{proof}
	Define 
	\be  
	\Phi_{1}(\r,\wt{\r})=\int_{\wt{\r}}^{\r}\f{p_{1}(s)-p_{1}(\wt{\r})}{s^{2}}ds,
	\quad  \mathcal{E}_{1}=\r(\f{\s^{2}}{2}+\Phi_{1}),  
	\label{mcE 1}
	\ee
	\be 
	 \Phi_{2}(n,\wt{n})=\int_{\wt{n}}^{n}\f{p_{2}(s)-p_{2}(\wt{n})}{s^{2}}ds,
	\quad \mathcal{E}_{2}=n(\f{\b{\s}^{2}}{2}+\Phi_{2}).
	\label{mcE 2}
	\ee
	Then, by $(\ref{f})$ and $(\ref{stationary f})$, the direct computations lead to 
	\be 
	\begin{split}
		&\quad (\mc{E}_{1}+\mc{E}_{2})_{t}+(G_{1}+G_{2})_{x}+n(\b{\s}-\s)^{2}+\mu \s_{x}^{2}+n\b{\s}^{2}_{x}
		+R_{1}+R_{2}=-R_{3},
	\end{split}
	\label{f_0}
	\ee 
	where
	\be \left\lbrace 
	\begin{aligned}
		&G_{1}:=u\mc{E}_{1}+v\mc{E}_{2} +(p_{1}(\r)-p_{1}(\wt{\r}))\s+(p_{2}(n)-p_{2}(\wt{n}))\b{\s}, 
		\\
		&
		G_{2}:=-(\mu \s\s_{x}+n\b{\s}\b{\s}_{x} +\b{\h}\b{\s}\wt{v}_{x}),
		\\
		&
		R_{1}:= [\r\s^{2} +p_{1}(\r) -p_{1}(\wt{\r}) -p'_{1}(\wt{\r}) \h]\wt{u}_{x} 
		+[n\b{\s}^{2} +p_{2}(n) -p_{2}(\wt{n}) -p'_{2}(\wt{n}) \b{\h}]\wt{v}_{x},
		\\
		& 
		R_{2}:=\h\s\f{\wt{\r}\wt{u}\wt{u}_{x} +(p_{1}(\wt{\r}))_{x}}{\wt{\r}}+\b{\h}\b{\s}\f{\wt{n}\wt{v}\wt{v}_{x} +(p_{2}(\wt{n}))_{x}}{\wt{n}},
		\\
		&
		R_{3}:= \b{\h}(\b{\s}-\s)(\wt{v} -\wt{u}) +\b{\h}\b{\s}_{x}\wt{v}_{x}.
	\end{aligned}
\right. 
\label{G}
	\ee 
	Integrating $(\ref{f_0})$ in $x$ over $\mb{R}_{+}$ leads to
	\be 
	\f{d}{dt}\int \mathcal{E}_{1}+\mathcal{E}_{2}dx-G_{1}(t,0)+\int n(\b{\s}-\s)^{2}+\mu \s_{x}^{2}+n\b{\s}^{2}_{x}dx
	+\int R_{1}dx+\int R_{2}dx=-\int R_{3}dx.
	\label{f_}
	\ee 
	 Under the condition $(\ref{boundary d1})$, we  get
		\be -G_{1}(t,0)=-u_{-}[\Phi_{1}(\r(t,0),\wt{\r}(0))+\Phi_{2}(n(t,0),\wt{n}(0))]\geq c(\h^{2}(t,0)+\b{\h}^{2}(t,0)).	\label{l_1}\ee 
	For the supersonic or subsonic case $M_{+}\neq1$, with the help of  $(\ref{M_{+}>1 stationary solution d})$,  $(\ref{infty 1})$ and $(\ref{d'1})$,   we have
	\be 
	\int_{0}^{\infty} |R_{1}|+|R_{2}|+|R_{3}| dx \leq C\delta \|(\h_{x},\s_{x},\b{\h}_{x},\b{\s}_{x},\b{\s}-\s) \|^{2}+C\delta(\h^{2}(t,0)+\b{\h}^{2}(t,0)).
	\label{r_1}
	\ee 
	For the sonic case  $M_{+}=1$ and the restriction $|p'_{1}({\r_{+}})-p'_{2}({n_{+}})|	\leq  \sqrt{2}|u_{+}|\min\{ (1+\f{\r_{+}}{n_{+}})[(\gm-1)p'_{1}(\r_{+})]^{\f{1}{2}},(1+\f{n_{+}}{\r_{+}})[(\a-1)p'_{2}(n_{+})]^{\f{1}{2}}  \}$,  using  $(\ref{sigma})$-$(\ref{sig})$, $(\ref{infty 1})$, and  $(\ref{d'2})$, we get
		\be 
\begin{aligned}
&
 \int R_{1}+R_{2}+R_{3}dx
 \\
\geq& \int (\s,\h)\boldsymbol{M_{1}}(\s,\h)^{\rm T}\wt{u}_{x}+(\b{\s},\b{\h})\boldsymbol{M_{2}}(\b{\s},\b{\h})^{\rm T}\wt{v}_{x}dx	-C\int \f{\delta^{3}}{(1+\delta x)^{3}}(\h^{2}+\s^{2}+\b{\h}^{2}+\b{\s}^{2})dx
 \\ &	-C\delta \int |\bar{\s}-\s|^{2}+\s_{x}^{2}dx 
		  -C\delta^{\f{1}{2}}\| (\h,\s,\b{\h},\b{\s})\| [\h^{2}(t,0)+\b{\h}^{2}(t,0)+\|(\h_{x},\s_{x},\b{\h}_{x},\b{\h}_{x})\|^{2}]
 \\
			\geq &
	-C(\delta+\v ) \|(\h_{x},\s_{x},\b{\h}_{x},\b{\s}_{x},\b{\s}-\s) \|^{2}-C\delta(\h^{2}(t,0)+\b{\h}^{2}(t,0)),
	\end{aligned}
\label{r_1'}
	\ee 
	where $\boldsymbol{M_{1}}$, $\boldsymbol{M_{2}}$ are positive definite or  non-negative definite   matrices defined by 
	\be 
	\begin{split}
		\boldsymbol{M_{1}}=
		\begin{pmatrix}
			&\r_{+}&\f{u_{+}^{2}-A_{1}\gm\r_{+}^{\gm-1}}{2u_{+}}	\\&
			\f{u_{+}^{2}-A_{1}\gm\r_{+}^{\gm-1}}{2u_{+}}&\f{A_{1}\gm (\gamma-1)\r_{+}^{\gm-2}}{2}
		\end{pmatrix}, \quad 
	\boldsymbol{M_{2}}=\begin{pmatrix}
			&n_{+}&\f{u_{+}^{2}-A_{2}\a n_{+}^{\a-1}}{2u_{+}}	\\&
			\f{u_{+}^{2}-A_{2}\a n_{+}^{\a-1}}{2u_{+}}& \f{A_{2}\a (\a-1)n_{+}^{\a-2}}{2}
		\end{pmatrix}.
	\end{split}
	\ee  
	Finally,	
	with the help of $(\ref{f_})$-$(\ref{r_1'})$,
	we  get $(\ref{e0})$.
	Hence,  the proof of Lemma \ref{lem e0} is completed.
\end{proof}

In order to complete the proof of Proposition $\ref{prop time decay}$,  we need to establish the high order  estimates of  $(\h,\s,\b{\h}, \b{\s})$.
\begin{lem}
	\label{lem e1}
Under the same conditions in Proposition $\ref{prop time decay}$, then the solution $(\h,\s,\b{\h},\b{\s})$ to the problem $(\ref{f1})$-$(\ref{boundary d1})$ satisfies  for $t \in[0,T]$ that
	\begin{equation}
	\begin{split}
	&\quad \| (\h_{x},\b{\h}_{x})  \|^{2}+\int_{0}^{t} \| (\h_{x},\b{\h}_{x}) \|^{2}d\tau +\int_{0}^{t}\h_{x}^{2}(t,0)+\b{\h}_{x}^{2}(t,0)d\tau\\&
		\leq C\| (\h_{0},\s_{0},\h_{0x},\b{\h}_{0},\b{\s}_{0},\b{\h}_{0x}) \|^{2}+C(\v+\delta) \int_{0}^{t}\|  (\s_{xx},\b{\s}_{xx})\|^{2}d\tau.
	\label{1'-order time e1}
	\end{split}
	\end{equation}
\end{lem}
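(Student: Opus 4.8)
The plan is to derive the estimate for $(\h_x,\b\h_x)$ by testing the first and third equations of $(\ref{f1})$ against suitable multipliers and combining with the already-established $L^2$ estimate of Lemma \ref{lem e0}. The natural idea, following the Navier–Stokes outflow arguments of \cite{KNZ}, is to work with the combination $\s_x-\frac{\mu}{\r}\cdot\frac{\h_x}{\r}$ appearing in $(\ref{f1})_2$: differentiate $(\ref{f1})_1$ in $x$ to get an equation for $\h_x$, and use $(\ref{f1})_2$ to write $\frac{p'_1(\r)}{\r^2}\h_x = \s_t+u\s_x-\frac{\mu\s_{xx}}{\r}-\frac{n(\b\s-\s)}{\r}-F_1$ so that $\h_x$ is controlled by $\s_t$, $\s_x$, $\s_{xx}$ and lower-order terms. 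Concretely, I would introduce the auxiliary quantity (as in \cite{KNZ}) something like $w:=\s+\frac{\mu}{\r}\cdot\frac{\h_x}{\text{something}}$ or directly compute $\frac{d}{dt}\int \big(\text{quadratic in }\h_x\big)\,dx$ by multiplying the $x$-differentiated continuity equation by $\frac{p'_1(\r)}{\r^2}\h_x$, and likewise for the $\b n$-phase with $\frac{p'_2(n)}{n^2}\b\h_x$. This produces a good term $\|\h_x\|^2+\|\b\h_x\|^2$ on the left, a boundary term $\h_x^2(t,0)+\b\h_x^2(t,0)$ with favorable sign coming from $u_-<0$ (the outflow condition), and on the right cross terms involving $\s_{xx}$, $\b\s_{xx}$, $\s_x$, $\b\s_x$, $\b\s-\s$, the steady-state derivatives, and the nonlinearities $F_1,F_2$.

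Next I would handle the error terms. The terms with $\s_{xx}$ and $\b\s_{xx}$ cannot be absorbed into the left-hand side and must be kept, weighted by a small constant — this is exactly why $(\ref{1'-order time e1})$ carries the $C(\v+\delta)\int_0^t\|(\s_{xx},\b\s_{xx})\|^2\,d\tau$ term on its right side, to be absorbed later by a second-order estimate for the velocities. The terms $\|\s_x\|^2$, $\|\b\s_x\|^2$, $\|\b\s-\s\|^2$ are already integrable in time by Lemma \ref{lem e0}, so they contribute to the $\|(\h_0,\s_0,\dots)\|_1^2$ bound after using $(\ref{e0})$. The contributions of the steady-state: using the decay bounds $(\ref{M_{+}>1 stationary solution d})$ for $M_+\neq1$ or $(\ref{sigma})$–$(\ref{sig})$ for $M_+=1$, together with Lemma \ref{lem d2}, any term of the form $\int \delta e^{-c_0 x}|\cdot|^2\,dx$ or $\int \frac{\delta^j}{(1+\delta x)^j}|\cdot|^2\,dx$ with $j>2$ is bounded by $C\delta$ times boundary plus derivative norms. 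The genuinely nonlinear terms in $F_1,F_2$ and the quadratic products like $\h\s_x$, $\b\h\b\s_x$ are estimated by the a priori smallness $(\ref{priori e})$ and the Sobolev embedding $(\ref{infty 1})$, giving factors $C\v$. Collecting everything and choosing $\delta,\v$ small yields $(\ref{1'-order time e1})$ after adding a small multiple of $(\ref{e0})$.

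One subtlety I would pay attention to: the time-derivative structure. Multiplying $\p_x(\ref{f1})_1$ by $\frac{p'_1(\r)}{\r^2}\h_x$ gives $\frac12\frac{d}{dt}\int\frac{p'_1(\r)}{\r^2}\h_x^2\,dx$ only up to a commutator term $\frac12\int\p_t\!\big(\frac{p'_1(\r)}{\r^2}\big)\h_x^2\,dx$, which is $O(\v)\|\h_x\|^2$ by $(\ref{infty 1})$ and the equation for $\r_t$; this is harmless. Also the term $\int u\h_{xx}\cdot\frac{p'_1(\r)}{\r^2}\h_x\,dx$ must be integrated by parts, producing $-\frac{u_-}{2}\cdot\frac{p'_1(\r(t,0))}{\r(t,0)^2}\h_x^2(t,0)$ (positive since $u_-<0$) plus an interior term $-\frac12\int\p_x\!\big(\frac{u p'_1(\r)}{\r^2}\big)\h_x^2\,dx$, whose bad part comes from $u_x=\s_x+\wt u_x$: the $\s_x$ piece is $O(\v)\|\h_x\|^2$ and the $\wt u_x$ piece is controlled by the steady-state decay as above.

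I expect the main obstacle to be bookkeeping the $\s_{xx}$/$\b\s_{xx}$ cross terms so that their coefficient is genuinely $O(\v+\delta)$ and not $O(1)$ — i.e. making sure the estimate is \emph{closed} in the chain Lemma \ref{lem e0} $\to$ Lemma \ref{lem e1} $\to$ (a forthcoming second-order velocity estimate). The coupling through the drag force $\b\s-\s$ is mild here since that term is already controlled in $L^2_tL^2_x$ by Lemma \ref{lem e0}, so the two-phase structure does not create an essential new difficulty at this level beyond doing the $\r$-phase and $\b n$-phase computations in parallel and adding them.
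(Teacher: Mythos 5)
There is a genuine gap in the mechanism you propose, even though you anticipate the shape of the final estimate correctly. The dissipation $\|(\h_{x},\b{\h}_{x})\|^{2}$ cannot be produced by testing the $x$-differentiated continuity equations against multiples of $\h_{x}$, $\b{\h}_{x}$ alone: those equations contain no damping in $\h$, so that computation only yields $\frac{d}{dt}\int(\cdots)\h_{x}^{2}dx$ plus cross terms with $\s_{xx}$, $\s_{x}$ and steady-state derivatives. In the paper the good term $\int\wt{\r}^{2}\frac{p'_{1}(\r)}{\r}\h_{x}^{2}dx$ comes from multiplying the momentum equation $(\ref{f1})_{2}$ by $\wt{\r}^{2}\h_{x}$, whose $\s_{t}$-contribution is rewritten as $(\wt{\r}^{2}\h_{x}\s)_{t}-(\wt{\r}^{2}\h_{t}\s)_{x}-\cdots$ and absorbed into the time derivative of the functional $\int(\mu\frac{\h_{x}^{2}}{2}+\frac{\b{\h}_{x}^{2}}{2}+\wt{\r}^{2}\h_{x}\s+\wt{n}\b{\h}_{x}\b{\s})dx$ in $(\ref{1f})$, with $\h_{xt}$ re-expressed through the continuity equation. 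Your alternative route --- solving $(\ref{f1})_{2}$ for $\frac{p'_{1}(\r)}{\r}\h_{x}$ and bounding $\h_{x}$ by $\s_{t},\s_{x},\s_{xx},\b{\s}-\s$ --- does not close at this stage, because $\int_{0}^{t}\|\s_{t}\|^{2}d\tau$ is not controlled by Lemma \ref{lem e0} or by anything else available before the second-order velocity estimate.

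Moreover, the point you flag as ``the main obstacle'' --- getting the coefficient of $\int_{0}^{t}\|(\s_{xx},\b{\s}_{xx})\|^{2}d\tau$ to be $O(\v+\delta)$ rather than $O(1)$ --- is not bookkeeping; it rests on an exact cancellation that your multipliers do not produce. Multiplying $\partial_{x}(\ref{f1})_{1}$ by $\mu\h_{x}$ creates the term $\mu\wt{\r}\h_{x}\s_{xx}$ (after splitting $\r=\wt{\r}+\h$), while multiplying $(\ref{f1})_{2}$ by $\wt{\r}^{2}\h_{x}$ creates $-\mu\wt{\r}\h_{x}\s_{xx}$ from the viscous term $\frac{\mu\s_{xx}}{\r}$; these cancel identically (compare $(\ref{h_{x}})$ and $(\ref{s_{x}})$), so the surviving $\s_{xx}$-terms carry factors $\h$, $\frac{1}{\r}-\frac{1}{\wt{\r}}$ or steady-state derivatives, which is exactly why $C(\v+\delta)$ appears in $(\ref{1'-order time e1})$. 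With your multiplier $\frac{p'_{1}(\r)}{\r^{2}}\h_{x}$ on the differentiated continuity equation and no correspondingly weighted test of the momentum equation, the cross term $\int\frac{p'_{1}(\r)}{\r}\h_{x}\s_{xx}dx$ enters with an $O(1)$ coefficient; Cauchy--Schwarz then leaves an $O(1)$ multiple of $\int_{0}^{t}\|\s_{xx}\|^{2}d\tau$ on the right, and since Lemma \ref{lem e1'} bounds $\int_{0}^{t}\|(\s_{xx},\b{\s}_{xx})\|^{2}d\tau$ only by an $O(1)$ multiple of $\int_{0}^{t}\|(\h_{x},\s_{x},\b{\h}_{x},\b{\s}_{x},\b{\s}-\s)\|^{2}d\tau$ plus data, the chain Lemma \ref{lem e0} $\to$ Lemma \ref{lem e1} $\to$ Lemma \ref{lem e1'} would be circular and cannot be closed. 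The specific weighted multiplier pairs $(\mu\h_{x},\,\wt{\r}^{2}\h_{x})$ and $(\b{\h}_{x},\,\wt{n}\b{\h}_{x})$, together with the cross terms $\wt{\r}^{2}\h_{x}\s$, $\wt{n}\b{\h}_{x}\b{\s}$ in the energy functional (absorbed at the end via Young's inequality and $(\ref{e0})$), are the missing essential ingredient of the argument, not a detail to be fixed afterwards.
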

\begin{proof}
	Differentiating $(\ref{f1})_{1}$ in $x$, then multiplying the resulted equation by $\mu \h_{x}$, $(\ref{f1})_{2}$ by $\wt{\r}^{2}\h_{x}$ respectively, we  gain
	\begin{align}
 & (\mu \f{\h_{x}^{2}}{2})_{t} +(\mu u\f{\h_{x}^{2}}{2})_{x} +\mu \wt{\r}\h_{x}\s_{xx}
 	\notag
	\\
	=&
	-\mu [\f{3}{2}\s_{x}\h_{x}^{2} +\h\h_{x}\s_{xx}+(\f{1}{2}\h_{x}\wt{u}_{x} +\s_{x}\wt{\r}_{x})\h_{x} +(\h\wt{u}_{x}+\s\wt{\r}_{x})_{x}\h_{x}],
	\label{h_{x}}
\\
	& (\wt{\r}^{2}\h_{x}\s)_{t}  -(\wt{\r}^{2}\h_{t}\s)_{x} +\wt{\r}^{2}\f{p'_{1}(\r)}{\r}\h_{x}^{2} -\mu \wt{\r}\h_{x}\s_{xx} 
		\notag
		\\
	=&
	-(\wt{\r}^{2}\h_{t}\s_{x}  +\wt{\r}^{2}u\h_{x}\s_{x}  +2\wt{\r}\wt{\r}_{x}\h_{t}\s )+	\mu\wt{\r}^{2}(\f{1}{\r} -\f{1}{\wt{\r}})\h_{x}\s_{xx}  +\wt{\r}^{2}\f{n}{\r}(\b{\s}-\s)\h_{x} +F_{1}\wt{\r}^{2}\h_{x}.
		\label{s_{x}}
\end{align}
	Similarly, differentiating $(\ref{f1})_{3}$ in $x$, then multiplying the resulted equation by $\b{\h}_{x}$, $(\ref{f1})_{4}$ by $\wt{n}\b{\h}_{x}$ respectively lead to
	\begin{align} 
&(\f{\b{\h}_{x}^{2}}{2})_{t} +(v\f{\b{\h}_{x}^{2}}{2})_{x} +\wt{n}\b{\h}_{x}\b{\s}_{xx}
=-[\f{3}{2}\b{\s}_{x}\b{\h}_{x}^{2}
 +\b{\h}\b{\h}_{x} \b{\s}_{xx} +(\f{1}{2}\b{\h}_{x}\wt{v}_{x} +\b{\s}_{x}\wt{n}_{x})\b{\h}_{x} -(\b{\h}\wt{v}_{x} +\b{\s}\wt{n}_{x})_{x}\b{\h}_{x}],
\\
&
  (\wt{n}\b{\h}_{x}\b{\s})_{t}-(\wt{n}\b{\h}_{t}\b{\s})_{x}
		+\wt{n}\f{p'_{2}(n)}{n}\b{\h}_{x}^{2}
		-\wt{n}\b{\h}_{x}\b{\s}_{xx}
		\notag
		\\
		=&
		-(\wt{n}\b{\h}_{t}\b{\s}_{x}
		+\wt{n}v \b{\h}_{x}\b{\s}_{x}  +\wt{n}_{x}\b{\h}_{t}\b{\s})
		+[\wt{n}\f{(\b{\h}\b{\s}_{x})_{x}}{n} +\wt{n}(\f{1}{n}-\f{1}{\wt{n}})(\wt{n}\b{\s}_{x})_{x}
		-\wt{n}(\b{\s}-\s)]\b{\h}_{x}
			\label{bs_{x}}
		\\
		&
		 +\wt{n}_{x}\b{\h}_{x}\b{\s}_{xx}+F_{2}\wt{n}\b{\h}_{x}.
		 \notag
	\end{align}
	Adding $(\ref{h_{x}})$-$(\ref{bs_{x}})$ together,   integrating the resulted equation over $\mb{R}_{+}$, we have
	\be 
	\begin{aligned} 
		& \f{d}{dt}\int (\mu \f{\h_{x}^{2}}{2} +\f{\b{\h}_{x}^{2}}{2} +\wt{\r}^{2}\h_{x}\s +\wt{n}\b{\h}_{x}\b{\s})dx +\int(\mu u\f{\h_{x}^{2}}{2} +v\f{\b{\h}_{x}^{2}}{2} -\wt{\r}^{2}\h_{t}\s -\wt{n}\b{\h}_{t}\b{\s})_{x}dx
		\\&
		 +\int (\wt{\r}^{2}\f{p'_{1}(\r)}{\r}\h_{x}^{2} +\wt{n}\f{p'_{2}(n)}{n}\b{\h}_{x}^{2})dx
		 \\
		=&
		\sum_{i=1}^{4}J_{i},
		\label{1f}
	\end{aligned}
	\ee 
	where
	\be 
	\begin{aligned}
		J_{1}=&
		-\int[ \wt{\r}^{2}(\h_{t} +u\h_{x})\s_{x} +\wt{n}(\b{\h}_{t} +v\b{\h}_{x})\b{\s}_{x} +2\wt{\r}\wt{\r}_{x}\h_{t}\s +\wt{n}_{x}\b{\h}_{t}\b{\s} ]dx,
		\\
		J_{2}=&
		\int -\mu \h\h_{x}\s_{xx} -\b{\h}\b{\h}_{x}\b{\s}_{xx} + \mu\wt{\r}^{2}(\f{1}{\r} -\f{1}{\wt{\r}})\h_{x}\s_{xx} +\wt{n}(\f{1}{n}-\f{1}{\wt{n}})(\wt{n}\b{\s}_{x})_{x}\b{\h}_{x} +\wt{\r}^{2}\f{n}{\r}(\b{\s} -\s)\h_{x} -\wt{n}(\b{\s} -\s)\b{\h}_{x}dx,
\\
		J_{3}=&
		-\int \f{3}{2}\mu \s_{x}\f{\h_{x}^{2}}{2}+\f{3}{2}\b{\s}_{x}\b{\h}_{x}^{2}dx+\int \wt{n}\f{(\b{\h}\b{\s}_{x})_{x}}{n}\b{\h}_{x}dx,\quad 
		\\
		J_{4}=&
		-\int [\mu (
		\f{1}{2}\h_{x}\wt{u}_{x}+\s_{x}\wt{\r}_{x})\h_{x}+\mu (\h\wt{u}_{x}+\s\wt{\r}_{x})_{x}\h_{x}
		-F_{1}\wt{\r}^{2}\h_{x}
		\\
		&
		+(\f{1}{2}\b{\h}_{x}\wt{v}_{x}+\b{\s}_{x}\wt{n}_{x})\b{\h}_{x}-(\b{\h}\wt{v}_{x}+\b{\s}\wt{n}_{x})_{x}\b{\h}_{x}-\wt{n}_{x}\b{\h}_{x}\b{\s}_{xx}
		-F_{2}\wt{n}\b{\h}_{x}]dx.
		\nonumber
	\end{aligned}
\ee
	First, we estimate terms in the left side of $(\ref{1f})$. 
	Under the condition $(\ref{boundary d1})$, the  terms in the left side is estimated as follows
	\begin{equation}
	\int (\mu u\f{\h_{x}^{2}}{2}+v\f{\b{\h}_{x}^{2}}{2}-\wt{\r}^{2}\h_{t}\s-\wt{n}\b{\h}_{t}\b{\s})_{x}dx=- u_{-}\f{\mu \h_{x}^{2}(0,t)+\b{\h}_{x}^{2}(0,t)}{2}\geq 0,
	\label{h(0)}
	\end{equation}
	\be 
		  \int \wt{\r}^{2}\f{p'_{1}(\r)}{\r}\h_{x}^{2}+\wt{n}\f{p'_{2}(n)}{n}\b{\h}_{x}^{2}dx
	\geq \r_{+}p'_{1}(\r_{+})\| \h_{x} \|^{2} 
	+p'_{2}(n_{+})\| \b{\h}_{x} \|^{2}-C(\v+\delta)\|(\h_{x},\b{\h}_{x}) \|^{2}.
	\ee 
 We turn to  estimate terms in the right hand side of $(\ref{1f})$. By  $(\ref{M_{+}>1 stationary solution d})$-$(\ref{sigma})$, $(\ref{f1})_{1}$, $(\ref{f1})_{3}$, $(\ref{d'1})$-$(\ref{d'2})$,   Cauchy-Schwartz inequality and Young inequality with $0<\e<1$, we  obtain 
	\begin{align}
		|J_{1}|
		\leq& C \|(\s_{x},\b{\s}_{x})\|^{2}+C\delta\|(\h_{x},\b{\h}_{x})\|^{2}+C\delta (\h^{2}(t,0)+\b{\h}^{2}(t,0)),
	\\
	|J_{2}|\leq &
	C\|(\h,\b{\h})\|_{L^{\infty}}\| (\h_{x},\b{\h}_{x},\s_{xx},\b{\s}_{xx}) \|^{2}+C_{\e}\|\b{\s}-\s \|^{2}+\eta\| (\h_{x},\b{\h}_{x}) \|^{2}+C\delta\| (\b{\h}_{x},\b{\s}_{x}) \|^{2}
	\notag
	\\
	\leq&
	 C(\v+\delta+\e)\| (\h_{x},\b{\h}_{x}) \|^{2}+C\v\| (\s_{xx},\b{\s}_{xx}) \|^{2}+C_{\e}\| \b{\s}-\s \|^{2}+C\delta\|\b{\s}_{x}  \|^{2},
   \\
	 |J_{3}|\leq&
	  C\| (\s_{x},\b{\s}_{x}) \|_{L^{\infty}} \| (\h_{x},\b{\h}_{x}) \|^{2}+C\| \b{\h} \|_{L^{\infty}}\| (\b{\h}_{x},\b{\s}_{xx}) \|^{2}
	  \notag
	  \\
    \leq &
    C\v \| (\h_{x},\s_{x},\b{\h}_{x},\b{\s}_{x}) \|^{2}+C\v\|( \s_{xx},\b{\s}_{xx}) \|^{2},
	\end{align} 
	\begin{align}
		|J_{4}|\leq&
		C\delta\| (\h_{x},\s_{x},\b{\h}_{x},\b{\s}_{x}) \|^{2}+C\delta\| \b{\s}_{xx} \|^{2}
		+C\delta(\h^{2}(t,0)+\b{\h}^{2}(t,0)).
		\label{I_{5}}
	\end{align} 
	Finally, the substitution of $(\ref{h(0)})$-$(\ref{I_{5}})$ into $(\ref{1f})$ for $\delta$, $\v$ and $\e$ small enough leads to 
	\begin{equation}
	\begin{aligned}
	&\f{d}{dt}\int(\h_{x}^{2}+\b{\h}_{x}^{2}+\wt{\r}^{2}\h_{x}\s +\wt{n}\b{\h}_{x}\b{\s})dx+\| (\h_{x},\b{\h}_{x})\|^{2}+\h_{x}^{2}(t,0)+\b{\h}_{x}^{2}(t,0)
	\\
	\leq&
	 C\| (\s_{x},\b{\s}_{x},\b{\s}_{x}-\s_{x}) \|^{2}+C(\delta+\v)\| (\s_{xx},\b{\s}_{xx}) \|^{2}+C\delta (\h^{2}(t,0)+\b{\h}^{2}(t,0)).
	\label{e1}
	\end{aligned}
	\end{equation}
	Integrating $(\ref{e1})$  in $\tau$
	over $[0,t]$, and using Lemma $\ref{lem e0}$ and Young inequality,  we have $(\ref{1'-order time e1})$.
\end{proof}

\begin{lem}
	\label{lem e1'}
Under the same conditions in Proposition $\ref{prop time decay}$, then the solution $(\h,\s,\b{\h},\b{\s})$ to the problem $(\ref{f1})$-$(\ref{boundary d1})$ satisfies  for $t \in[0,T]$ that
	\begin{equation}
	\begin{split}
	&
	~~~~  \|(\psi_{x},\b{\s}_{x} ) \|^{2} 	+\int_{0}^{t}\|( \psi_{xx},\b{\s}_{xx}) \|^{2} 	d\tau
	\leq C\|(\h_{0},\s_{0},\b{\h}_{0},\b{\s}_{0})\|^{2}_{1} . 
	\label{s 3}
	\end{split}
	\end{equation}
\end{lem}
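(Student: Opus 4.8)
The plan is to establish $(\ref{s 3})$ by the standard ``parabolic'' energy estimate for the velocity perturbations: test the momentum equations $(\ref{f1})_2$ and $(\ref{f1})_4$ against $-\psi_{xx}$ and $-\bar{\psi}_{xx}$ respectively and integrate over $\mathbb{R}_{+}$, so that the viscous terms produce the favorable contributions $\int \tfrac{\mu}{\rho}\psi_{xx}^{2}\,dx$ and, after writing $\tfrac{(n\bar{\psi}_x)_x}{n}=\bar{\psi}_{xx}+\tfrac{n_x}{n}\bar{\psi}_x$, $\int \bar{\psi}_{xx}^{2}\,dx$. In the time-derivative and convection terms I would integrate by parts in $x$; the boundary condition $(\psi,\bar{\psi})(t,0)=(0,0)$ forces $\psi_t(t,0)=\bar{\psi}_t(t,0)=0$, so the time-derivative boundary terms vanish, while the convection terms, since $u(t,0)=v(t,0)=u_-$, leave the boundary contributions $\tfrac12 u_-\psi_x^{2}(t,0)$ and $\tfrac12 u_-\bar{\psi}_x^{2}(t,0)$, which — being non-positive as $u_-<0$ — only cost us the positive quantities $\tfrac12|u_-|(\psi_x^{2}(t,0)+\bar{\psi}_x^{2}(t,0))$ on the right-hand side.

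Adding the two resulting identities yields, for some $c>0$, schematically
\[
\frac{d}{dt}\,\tfrac12\|(\psi_x,\bar{\psi}_x)\|^{2}+c\,\|(\psi_{xx},\bar{\psi}_{xx})\|^{2}\le \tfrac12|u_-|\big(\psi_x^{2}(t,0)+\bar{\psi}_x^{2}(t,0)\big)+C\|(\h_x,\psi_x,\b{\h}_x,\b{\s}_x,\b{\s}-\s)\|^{2}+C\big(\|F_1\|^{2}+\|F_2\|^{2}\big),
\]
where the pressure terms $\tfrac{p_1'(\r)}{\r}\h_x(-\psi_{xx})$ and $\tfrac{p_2'(n)}{n}\b{\h}_x(-\b{\s}_{xx})$, the friction terms, the convective remainders $\int u_x\psi_x^{2}$ and $\int v_x\bar{\psi}_x^{2}$, and the extra term $\tfrac{n_x}{n}\bar{\psi}_x\bar{\psi}_{xx}$ (with $n_x=\b{\h}_x+\wt{n}_x$) are all dominated by $\eta\|(\psi_{xx},\bar{\psi}_{xx})\|^{2}$ plus the displayed quantities, using Cauchy--Schwarz, Young's inequality, the Sobolev bound $\|f\|_{L^{\infty}}\le C\|f\|^{1/2}\|f_x\|^{1/2}$ and the a priori smallness $(\ref{priori e})$. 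The boundary terms are then absorbed through the trace inequality $f^{2}(t,0)\le 2\|f\|\|f_x\|\le \eta\|f_x\|^{2}+C_\eta\|f\|^{2}$ applied with $f=\psi_x,\bar{\psi}_x$, the resulting $\eta\|(\psi_{xx},\bar{\psi}_{xx})\|^{2}$ being absorbed into the dissipation. For $\|F_1\|^{2}$ and $\|F_2\|^{2}$ I would argue exactly as for the terms $R_2,R_3$ in the proof of Lemma \ref{lem e0}: from the structure of $(\ref{F1})$--$(\ref{F2})$ together with $(\ref{M_{+}>1 stationary solution d})$ for $M_+\neq1$, or $(\ref{sigma})$--$(\ref{sig})$ together with the cancellation $\wt{v}-\wt{u}=O(\sigma^{2})$ from $(\ref{sigma 1})$ for $M_+=1$ (so that the slowly decaying weight $\delta/(1+\delta x)$ never enters at a power $\le 2$ when Lemma \ref{lem d2} is invoked), one gets $\|F_1\|^{2}+\|F_2\|^{2}\le C\delta\big(\h^{2}(t,0)+\b{\h}^{2}(t,0)+\|(\h_x,\psi_x,\b{\h}_x,\b{\s}_x)\|^{2}\big)$.

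Integrating this differential inequality in $\tau$ over $[0,t]$ and absorbing the $\eta$-multiplied $\|(\psi_{xx},\bar{\psi}_{xx})\|^{2}$ terms into the left side leaves, besides $\|(\psi_{0x},\bar{\psi}_{0x})\|^{2}$, the time integrals $\int_0^t\|(\h_x,\psi_x,\b{\h}_x,\b{\s}_x,\b{\s}-\s)\|^{2}\,d\tau$ and $\int_0^t(\h^{2}+\b{\h}^{2})(\tau,0)\,d\tau$. These are supplied by Lemma \ref{lem e0} (which controls $\int_0^t\|(\psi_x,\b{\s}_x,\b{\s}-\s)\|^{2}$ and $\int_0^t(\h^{2}+\b{\h}^{2})(\tau,0)$ up to $C(\delta+\v)\int_0^t\|(\h_x,\b{\h}_x)\|^{2}$) and Lemma \ref{lem e1} (which controls $\int_0^t\|(\h_x,\b{\h}_x)\|^{2}$ up to $C(\v+\delta)\int_0^t\|(\s_{xx},\b{\s}_{xx})\|^{2}$). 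Since every feedback coefficient carries the factor $\delta+\v$, choosing $\delta$ and $\v$ small enough lets this chain close and all feedback terms be absorbed into the left side, giving $(\ref{s 3})$ with right-hand side $C\|(\h_0,\s_0,\b{\h}_0,\b{\s}_0)\|_1^{2}$.

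I expect the main obstacle to be the careful bookkeeping of the steady-state remainders rather than any conceptual difficulty: in particular, handling the non-divergence-form viscosity $(n\bar{\psi}_x)_x/n$ so that the extra term $\tfrac{n_x}{n}\bar{\psi}_x\bar{\psi}_{xx}$ is genuinely absorbable, extracting the quadratic (not merely linear) decay of $\wt{v}-\wt{u}$ in the sonic case so that Lemma \ref{lem d2} always applies with exponent $>2$, and verifying at the end that the three-way coupling among Lemmas \ref{lem e0}, \ref{lem e1} and \ref{lem e1'} closes under the a priori smallness.
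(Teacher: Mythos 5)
Your proposal is correct and follows essentially the same route as the paper: test $(\ref{f1})_{2}$ and $(\ref{f1})_{4}$ with $-\psi_{xx}$ and $-\b{\s}_{xx}$, extract the dissipation $\|(\s_{xx},\b{\s}_{xx})\|^{2}$, bound the remaining terms by $C\|(\h_x,\s_x,\b{\h}_x,\b{\s}_x,\b{\s}-\s)\|^{2}+C\delta(\h^{2}(t,0)+\b{\h}^{2}(t,0))$ using the steady-state decay and Lemma \ref{lem d2}, and close by integrating in time and invoking Lemmas \ref{lem e0}--\ref{lem e1} with $\delta+\v$ small. The only cosmetic deviation is that you integrate the convection terms by parts and absorb the resulting boundary terms via a trace inequality, whereas the paper estimates $u\psi_x\psi_{xx}$ and $v\b{\s}_x\b{\s}_{xx}$ directly by Cauchy--Schwarz (so no boundary terms arise); both work.
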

\begin{proof}
	Multiplying $(\ref{f1})_{2}$ by $-\psi_{xx}$, $(\ref{f1})_{4}$ by $-\b{\s}_{xx}$, respectively,  then adding them together and integrating the resulted equation in $x$ over $\mathbb{R}_{+}$ 
	imply
	\begin{equation}
	\begin{split}
	&
	\frac{d}{dt} \int \frac{\psi^{2}_{x}}{2}+\f{\b{\s}_{x}^{2}}{2}dx  		+ \int  \mu \frac{1}{\rho}	\psi^{2}_{xx}+\b{\s}_{xx}^{2}  dx  =\sum^{3}_{i=1} K_{i},
	\label{psi_{xx} space w1}
	\end{split}
	\end{equation}
	where
	\begin{equation}
	\begin{aligned}
	K_{1}	=&
	\int [ 	u \psi_{x} \psi_{xx} 	-\f{n}{\r}(\b{\s}-\s)\s_{xx}+\f{p'_{1}(\r)}{\r}\h_{x}\s_{xx}+v \b{\s}_{x}\b{\s}_{xx}+(\b{\s}-\s)\b{\s}_{xx}	+\f{p'_{2}(n)}{n}\b{\h}_{x}\b{\s}_{xx}]		dx,
	\\
	K_{2} 	=&
	\int  [  \widetilde{u}_{x} \psi \psi_{xx}  	-\mu \widetilde{u}_{xx} ( \frac{1}{\rho}-\f{1}{\wt{\r}})\s_{xx}+(\f{p'_{1}(\r)}{\r}-\f{p'_{1}(\wt{\r})}{\wt{\r}})\s_{xx}\wt{\r}_{x}-(\f{n}{\r}-\f{\wt{n}}{\wt{\r}})(\wt{v}-\wt{u})\s_{xx} 	
	+\widetilde{v}_{x} \b{\s} \b{\s}_{xx}
	\\
	&	+ (\wt{n}\wt{v}_{x})_{x} ( \frac{1}{n}-\f{1}{\wt{n}})\b{\s}_{xx}+(\f{p'_{2}(n)}{n}-\f{p'_{2}(\wt{n})}{\wt{n}})\b{\s}_{xx}\wt{n}_{x}-\f{\wt{n}_{x}}{\wt{n}}\b{\s}_{x}\b{\s}_{xx} -\f{(\b{\h}\wt{v}_{x})_{x}}{n}\b{\s}_{xx}]	dx,
\\	
	K_{3} =&
	-\int  [\f{(\b{\h}\b{\s}_{x})_{x}}{n}\b{\s}_{xx}+(\f{1}{n}-\f{1}{\wt{n}})(\wt{n}\b{\s}_{x})_{x}\b{\s}_{xx}]	dx.
	\nonumber
	\end{aligned}
	\end{equation}
	 We estimate  terms in the left side of $(\ref{psi_{xx} space w1})$. 
By the decomposition $\frac{1}{\rho}=(\frac{1}{\rho}-\frac{1}{\widetilde{\rho}})+(\frac{1}{\widetilde{\rho}}-\frac{1} {  \rho_{+} } )+\frac{1} {  \rho_{+} } $,
	the second term  is estimated as follows:
	\begin{equation}
	\begin{aligned}
	\int	\frac{\mu}{\rho} 	\psi_{xx}^{2}+\b{\s}_{xx}^{2}	dx\geq &
	\f{\mu}{\r_{+}}\| \s_{xx} \|^{2} +\| \b{\s}_{xx} \|^{2} -C(\|\h \|_{L^{\infty}}+\delta)\| \s_{xx} \|^{2}
	\\
	 \geq&
	  \f{\mu}{\r_{+}}\| \s_{xx} \|^{2} +\| \b{\s}_{xx} \|^{2} -C(\v+\delta)\| \s_{xx} \|^{2}.
	\label{s_{xx} e}
	\end{aligned}
	\end{equation}
 We  turn to estimate  terms in the right side of $(\ref{psi_{xx} space w1})$. With the aid of $(\ref{M_{+}>1 stationary solution d})$, Sobolev inequality and Cauchy-Schwarz inequality,  we have
	\begin{align}
	| K_{1}|	\leq &
		\frac{\mu }{ 16\rho_{+} }	\| \psi_{xx} \|^{2}	+\f{1}{16}\|\b{\s}_{xx} \|^{2} +C \| (\h_{x},\s_{x},\b{\h}_{x},\b{\s}_{x},\b{\s}-\s) \|^{2},
	\label{K_{1} e }
   \\
	|K_{2}|	\leq &
	C   \delta  \| ( \h_{x},\psi_{x},\b{\h}_{x},\b{\s}_{x})  \|^{2} +C\delta\|(\s_{xx}, \b{\s}_{xx} )\|^{2} +C\delta(\h^{2}(t,0)+\b{\h}^{2}(t,0)),
	\label{K_{2} e }
	\\
	|K_{3} |	\leq &
		C 	\| \b{\h}\|_{L^{\infty}} \| (\b{\s}_{x},\b{\s}_{xx}) \|^{2} +C\| \b{\s}_{x} \|_{L^{\infty}} \| \b{\s}_{xx} \|~\| \b{\h}_{x} \|
	 \leq C\v\| (\b{\s}_{x},\b{\s}_{xx}) \|^{2}.
	\label{K_{3} e}
	\end{align}
	Finally, taking $\delta$ and $\varepsilon$ small enough and  substituting of $(\ref{s_{xx} e})$-$(\ref{K_{3} e})$ into $(\ref{psi_{xx} space w1})$, we  obtain 
	\begin{equation}
		\frac{d}{dt} 	\int	\psi_{x}^{2}+\b{\s}_{x}^{2} dx   	+	 \frac{\mu}{2\rho_{+}} \|\psi_{xx}\|^{2} +\f{1}{2}	\| \b{\s}_{xx}\|^{2}
	\leq 	
	C \| (\h_{x},\s_{x},\b{\h}_{x},\b{\s}_{x},\b{\s}-\s) \|^{2}	 +C\delta(\h^{2}(t,0)+\b{\h}^{2}(t,0)).
	\label{psi_{xx} e1'}
	\end{equation}
	Integrating  $(\ref{psi_{xx} e1'})$  in $\tau$ over $[0,t]$, and using  Lemmas $\ref{lem e0}$-$\ref{lem  e1}$ and the smallness of $\delta$ and $\varepsilon$, we obtain the desired estimate $(\ref{s 3})$.
	Therefore, we complete the proof of Lemma $\ref{lem e1'}$.
\end{proof}
With the help of  Lemmas \ref{lem e0}-\ref{lem e1'}, we get $(\ref{e})$ and complete the proof of Proposition \ref{prop time decay}.
\section{Time convergence rates}
\label{sec: convergence rate}

\subsection{ Convergence rate of supersonic steady-state} 
\label{sec:$M'_{+} >1$}


\begin{prop}
\label{prop M>1 time decay}
Assume that   the same conditions in Theorem $\ref{thm time decay}$  for $M_{+}>1$ hold and let $(\h,\s,\b{\h},\b{
\s})$ be a solution to the IBVP $(\ref{f1})$-$(\ref{boundary d1})$ satisfying  $(\h,\s,\b{\h},\b{\s})\in C([0,T]; H^{1})$ and 
${(1+x)^{\f{\nu}{2}}} (\h,\s,\b{\h},\b{\s}) \in C([0,T]; L^{2}) $  for any time  $T>0$. Then for arbitrary $\nu \in [0, \lambda]$, there exist positive constants $\varepsilon>0$ and $C>0$ independent of $T$ such that if 
\be
\sup_{0 \leq t \leq T}\| (\h,\s,\b{\h},\b{\s}) (t) \|_{1}+\delta\leq \v
\ee
is satisfied, it holds for arbitrary $t \in [0, T]$ that 	
\begin{equation}
\begin{aligned}
&
(1+t)^{\lambda  -\nu  +\theta} (\| (\h,\s,\b{\h},\b{\s}) \|_{1} +\| (\h,\s,\b{\h},\b{\s}) \|^{2}_{a,\nu})      +\nu \int_{0}^{t} (1 +\tau)^{\lambda  -\nu  + \theta} \|( \h,\b{\h},\s,\b{\s})\|^{2}_{a,\nu-1}d \tau
\\
&
+\int_{0}^{t} (1 + \tau)^{\lambda  -\nu  +\theta} \|( \s_{x},\b{\s}_{x},\b{\s}-\s) \|^{2}_{a,\nu}  d\tau +\int_{0}^{t} (1 +\tau)^{\lambda  -\nu  +\theta} \|( \h_{x},\s_{xx},\b{\h}_{x},\b{\s}_{xx}) \|^{2}  d\tau
\\
\leq &
C (1+t)^{\theta}(\| (\h_{0},\s_{0},\b{\h}_{0},\b{\s}_{0}) \|^{2}_{1} +\|(\h_{0},\s_{0},\b{\h}_{0},\b{\s}_{0})\|^{2}_{a,\l}) ,
\label{e__1}
\end{aligned}
\end{equation}
with  $\theta>0$. 
\end{prop}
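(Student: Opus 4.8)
The plan is to run a weighted energy argument with the time-and-space weight $(1+t)^{\lambda-\nu+\theta}(1+x)^{\nu}$ and close the estimate by an induction on $\nu$, stepping down from $\nu=\lambda$ to the desired range $\nu\in[0,\lambda]$. First I would multiply the balance law $(\ref{f_0})$ (the combined equation for $\mathcal{E}_1+\mathcal{E}_2$ from Lemma \ref{lem e0}) by the weight $W_{a,\nu}=(1+x)^{\nu}$ and integrate over $\mathbb{R}_+$. The key new terms compared with Lemma \ref{lem e0} are the ones produced when $\partial_x$ falls on the weight: $\int \nu(1+x)^{\nu-1}G_1\,dx$ and $\int\nu(1+x)^{\nu-1}G_2\,dx$. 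Using the explicit form of $G_1=u\mathcal E_1+v\mathcal E_2+(p_1(\rho)-p_1(\widetilde\rho))\psi+(p_2(n)-p_2(\widetilde n))\overline\psi$ together with $u\to u_+<0$, $v\to u_+<0$, and a Taylor expansion around the far field, the dominant part of $\int\nu(1+x)^{\nu-1}G_1\,dx$ is $\frac{\nu |u_+|}{2}\int(1+x)^{\nu-1}(\text{positive quadratic in }\phi,\psi,\overline\phi,\overline\psi)\,dx$ up to an error controlled by $\delta$ and by $\|(\psi_x,\overline\psi_x)\|^2_{a,\nu}$; this yields the good term $\nu\|(\phi,\overline\phi,\psi,\overline\psi)\|^2_{a,\nu-1}$ on the left-hand side. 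The terms $R_1,R_2,R_3$ and the $G_2$ contribution are handled exactly as in Lemma \ref{lem e0} but now with the weight, using $(\ref{M_{+}>1 stationary solution d})$ — the exponential decay $e^{-c_0x}$ of $(\widetilde\rho-\rho_+,\ldots)$ and its derivatives kills any polynomial weight, so $\int(1+x)^\nu e^{-c_0x}|\cdot|^2\,dx$ is absorbed into $\delta$ times lower-order weighted norms plus boundary terms. The boundary term $-W_{a,\nu}(0)G_1(t,0)=-u_-[\Phi_1+\Phi_2](t,0)$ is still nonnegative by $(\ref{l_1})$.

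Next I would carry out the analogous weighted computation for the first-order estimates, mirroring Lemmas \ref{lem e1} and \ref{lem e1'}: multiply the $\partial_x$-differentiated continuity equations and the momentum equations by the appropriate combinations times $W_{a,\nu}$ (for Lemma \ref{lem e1}-type) or simply by $-W_{a,\nu}\psi_{xx}$, $-W_{a,\nu}\overline\psi_{xx}$ (for Lemma \ref{lem e1'}-type). The $\partial_x$-on-the-weight terms here produce factors $\nu(1+x)^{\nu-1}$ multiplying products like $\phi_x\psi_x$, which by Young's inequality are bounded by $\eta\|(\phi_x,\overline\phi_x)\|^2_{a,\nu}+C_\eta\|(\psi_x,\overline\psi_x)\|^2_{a,\nu-1}$; the first is absorbed, the second is of lower weight order and will be controlled by the induction hypothesis. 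Since the highest-order dissipation terms $\|(\phi_x,\overline\phi_x)\|^2$, $\|(\psi_{xx},\overline\psi_{xx})\|^2$ do not carry the spatial weight in the statement $(\ref{e__1})$, I would be careful to keep the weighted version of those only where it is actually gained; in practice the weighted first-order estimates reproduce $(\ref{1'-order time e1})$ and $(\ref{s 3})$ with an extra $(1+x)^\nu$ on the zeroth-order dissipations and nothing worse on the top derivatives, at the cost of error terms in $\|\cdot\|^2_{a,\nu-1}$.

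Then I would insert the time weight. Multiplying the assembled weighted inequality by $(1+t)^{\lambda-\nu+\theta}$ and integrating in $\tau$, the time derivative of the weight generates $-(\lambda-\nu+\theta)(1+\tau)^{\lambda-\nu+\theta-1}\|(\phi,\psi,\overline\phi,\overline\psi)\|^2_{a,\nu}$ on the left, i.e.\ a term with one more time power and one less space weight power than the dissipation $\nu\int(1+\tau)^{\lambda-\nu+\theta}\|\cdot\|^2_{a,\nu-1}$ that we produced spatially. This is the standard interpolation trick: the spatial dissipation at weight $\nu-1$ dominates the time-differentiated term at weight $\nu$ after using $(1+x)^{\nu-1}\ge(1+x)^{\nu}/(1+t)\cdot$(something) on the region $x\le t$ and the time weight on $x\ge t$ — more precisely one splits and uses that on $\{x\ge t\}$, $(1+x)^\nu(1+t)^{\lambda-\nu+\theta-1}\le(1+x)^{\nu-1}(1+t)^{\lambda-\nu+\theta}$ trivially, while on $\{x\le t\}$ one pays a factor and still closes because $\lambda<\lambda^*$ is not even needed in the supersonic case (only $\nu\le\lambda$). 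Running this from $\nu=\lambda$ downward (base case $\nu=\lambda$: the time-derivative-of-weight term has exponent $\theta-1$ and is handled by the unweighted Proposition \ref{prop time decay} after integrating, giving the $(1+t)^\theta$ factor on the right) completes the induction and yields $(\ref{e__1})$.

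The main obstacle I anticipate is the bookkeeping in the induction step — specifically, ensuring that every error term generated at weight level $\nu$ (from the friction coupling $\overline\psi-\psi$, from $F_1,F_2$, and from the $\partial_x$-on-weight terms in the first-order estimates) is genuinely of strictly lower weight order $\nu-1$ or carries a small factor $\delta+\varepsilon$, so that the induction hypothesis at level $\nu-1$ (and Proposition \ref{prop time decay} at the base) actually closes the loop without circularity. The friction term is the delicate one: $\psi$ must be decomposed as $\psi=\overline\psi+(\psi-\overline\psi)$, with $\overline\psi$ absorbed into the good weighted quadratic form coming from $\int\nu(1+x)^{\nu-1}G_1\,dx$ and $\psi-\overline\psi$ controlled by the friction dissipation $\|\overline\psi-\psi\|^2_{a,\nu}$, exactly as flagged in the strategy discussion after the remarks. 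Everything else is routine Cauchy–Schwarz and Young, using the exponential decay $(\ref{M_{+}>1 stationary solution d})$ of the steady state to make the stationary-solution-dependent coefficients harmless against any polynomial spatial weight.
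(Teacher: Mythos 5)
Your spatial weighted estimates follow the paper's route: multiplying $(\ref{f_0})$ by $W_{a,\nu}=(1+x)^{\nu}$, using the boundary sign $(\ref{l_1})$, killing the steady-state terms $R_1,R_2,R_3$ with the exponential decay $(\ref{M_{+}>1 stationary solution d})$, and recovering positivity of the $G_1$-term via the decomposition $\s=\b{\s}+(\s-\b{\s})$. One caution there: your earlier claim that the dominant part of $\nu\int(1+x)^{\nu-1}G_1\,dx$ is already a positive quadratic form in $(\h,\s,\b{\h},\b{\s})$ is false for $M_+>1$ in general, since the combined sonic condition does not imply $u_+^2>p_1'(\r_+)$ and $u_+^2>p_2'(n_+)$ separately; positivity only emerges after the decomposition, in the variables $(\h,\b{\h},\b{\s})$ through the $3\times3$ matrix $\boldsymbol{M_{3}}$ of $(\ref{G1})$, with the $(\s-\b{\s})$ pieces absorbed by the friction dissipation. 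You do invoke the decomposition in your last paragraph, so this is an internal inconsistency rather than a fatal error.

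The genuine gap is in the mechanism that closes the time weights, which is the crux of the proposition. You propose to multiply directly by $(1+t)^{\lambda-\nu+\theta}$ and absorb the resulting term $(\lambda-\nu+\theta)\int(1+\tau)^{\lambda-\nu+\theta-1}\|\cdot\|^2_{a,\nu}d\tau$ by splitting $\{x\le t\}$ and $\{x\ge t\}$; but the inequality $(1+x)^{\nu}(1+t)^{\xi-1}\le(1+x)^{\nu-1}(1+t)^{\xi}$ holds precisely on $\{x\le t\}$, the opposite of what you wrote, and on the complementary region (where the spatial dissipation at weight $\nu-1$ does not dominate) you only say "one pays a factor and still closes", which is exactly the step that needs an argument. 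Your proposed base case also fails: at $\nu=\lambda$ the bad term is $\theta\int(1+\tau)^{\theta-1}\|\cdot\|^2_{a,\lambda}d\tau$, which the unweighted Proposition \ref{prop time decay} cannot control since it contains no spatially weighted norms. The paper avoids both issues by the Kawashima--Matsumura/Nishikawa double induction on the pair $(\xi,\nu)$: the base case is $\xi=0$, $\nu=\lambda$ (so the time-derivative term is absent), and then $\xi$ is raised by integer steps while $\nu$ is lowered, each bad term $\xi\int(1+\tau)^{\xi-1}\|\cdot\|^2_{a,\nu}d\tau$ being controlled by the dissipation appearing in the previously established estimate at a lower time power and higher spatial weight ($(\ref{e1-2})$--$(\ref{e1-3})$); for non-integer $\lambda$ an interpolation between $\|\cdot\|_{a,\lambda-[\lambda]}$ and $\|\cdot\|_{a,\lambda-[\lambda]-1}$ together with the time integral computation $(\ref{e1-5})$, with $\xi=\lambda+\theta(1+[\lambda]-\lambda)$, is what produces the $(1+t)^{\theta}$ loss on the right of $(\ref{e__1})$. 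Without this (or an equivalent correctly justified absorption), your induction does not close.
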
  
$~~~~$Our first goal is  to obtain the basic weighted energy estimates of $(\h,\s,\b{\h},\b{\s})$. 
\begin{lem}
\label{lem h e1}
Under the same conditions in Proposition $\ref{prop M>1 time decay}$, then the solution $(\h,\s,\b{\h},\b{\s})$ to the IBVP $(\ref{f1})$-$(\ref{boundary d1})$ satisfies for $t \in[0,T]$ that
\begin{equation}
\begin{aligned}
&
(1+t)^{\xi } \| (\h,\s,\b{\h},\b{\s}) \|^{2}_{a,\nu}   +\nu \int_{0}^{t} (1  +\tau )^{\xi } \| (\h,\s,\b{\h},\b{\s}) \|^{2}_{a,\nu-1}  d\tau
\\
&
+\int_{0}^{t}  (1+\tau)^{\xi}  \|(\s_{x},\b{\s}_{x},\b{\s}-\s)  \|^{2}_{a,\nu}  d \tau 
+\int_{0}^{t}(1+\tau)^{\xi}(\h^{2}(t,0)+\b{\h}^{2}(t,0))d\tau
\\
\leq&
 C\| (\h_{0},\s_{0},\b{\h}_{0},\b{\s}_{0}) \|^{2}_{a,\lambda}   +C \delta  \int_{0}^{t}  ( 1  + \tau )^{\xi}  \| (\h_{x},\b{\h}_{x}) \|^{2} d\tau
 \\
&+C\nu \int_{0}^{t}  ( 1  +\tau )^{\xi} \| (\s_{x},\b{\s}_{x},\b{\s}-\s) \|^{2}_{a,\nu-1}d\tau
+ C\xi  \int_{0}^{t}  (1  +\tau )^{\xi -1}  \| (\h,\s,\b{\h},\b{\s}) \|^{2}_{a,\nu} d\tau  .
\label{L^{2} time e1}
\end{aligned}
\end{equation}
with $\xi\geq0$.
\end{lem}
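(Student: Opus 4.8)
\textbf{Proof plan for Lemma \ref{lem h e1}.}
The plan is to carry out a time-weighted, space-weighted $L^2$ energy estimate built on top of the basic identity $(\ref{f_0})$ established in the proof of Lemma \ref{lem e0}. First I would multiply the pointwise energy identity $(\mc{E}_{1}+\mc{E}_{2})_{t}+(G_{1}+G_{2})_{x}+n(\b{\s}-\s)^{2}+\mu\s_{x}^{2}+n\b{\s}^{2}_{x}+R_{1}+R_{2}=-R_{3}$ by the spatial weight $W_{a,\nu}=(1+x)^{\nu}$ and integrate over $\mb{R}_{+}$. The crucial new contribution, compared with Lemma \ref{lem e0}, comes from integrating the flux term $(G_{1}+G_{2})_{x}$ by parts against the weight: this produces $-\int W_{a,\nu}'\,(G_{1}+G_{2})\,dx = -\nu\int (1+x)^{\nu-1}(G_{1}+G_{2})\,dx$, whose leading part is a negative-definite quadratic form in $(\h,\s,\b{\h},\b{\s})$ thanks to $u_{+}<0$ (so $u,v<0$ near the boundary and at infinity) together with $M_{+}>1$; this is exactly where the gain of the dissipative term $\nu\int(1+\tau)^{\xi}\|(\h,\s,\b{\h},\b{\s})\|^{2}_{a,\nu-1}\,d\tau$ on the left-hand side originates. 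The boundary term at $x=0$ is handled exactly as in $(\ref{l_1})$, giving the $\h^{2}(t,0)+\b{\h}^{2}(t,0)$ contribution with a good sign.

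Next I would multiply the resulting weighted differential inequality by the time weight $(1+t)^{\xi}$ and rewrite $\frac{d}{dt}\big((1+t)^{\xi}\int W_{a,\nu}(\mc{E}_1+\mc{E}_2)dx\big)$, which generates the extra term $\xi(1+t)^{\xi-1}\int W_{a,\nu}(\mc{E}_1+\mc{E}_2)dx$; since $\mc{E}_1+\mc{E}_2\sim \h^2+\s^2+\b{\h}^2+\b{\s}^2$, this is precisely the last term $C\xi\int(1+\tau)^{\xi-1}\|(\h,\s,\b{\h},\b{\s})\|^{2}_{a,\nu}d\tau$ on the right of $(\ref{L^{2} time e1})$, to be absorbed or iterated later in the proof of Proposition \ref{prop M>1 time decay}. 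The error terms $R_{1},R_{2},R_{3}$ are estimated as in $(\ref{r_1})$ but now carrying the weight: using the exponential decay $(\ref{M_{+}>1 stationary solution d})$ of the steady state, the weight $(1+x)^{\nu}$ against $e^{-c_0 x}$ still decays exponentially, so Lemma \ref{lem d2} (in the form $(\ref{d'1})$, applied to $(1+x)^{\nu/2}\s$ etc.) and the a priori smallness $(\ref{priori e})$ give a bound by $C\delta\|(\h_x,\b{\h}_x)\|^{2}+C\delta(\h^2(t,0)+\b{\h}^2(t,0))$ plus weighted $\s_x,\b{\s}_x,\b{\s}-\s$ terms, matching the right-hand side of $(\ref{L^{2} time e1})$. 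The nonlinear/transport terms $u\h_x$, $\r\s_x$ etc. hidden in $(\ref{f1})$ are moved around by integration by parts so that no uncontrolled $\h_x,\b{\h}_x$ without a $\delta$ factor appears, again using $W_{a,\nu}'=\nu(1+x)^{\nu-1}$ and Young's inequality to split cross terms between the good weighted dissipation $\|(\s_x,\b{\s}_x,\b{\s}-\s)\|^{2}_{a,\nu}$ and the weighted $L^2$ norm $\|(\h,\s,\b{\h},\b{\s})\|^{2}_{a,\nu-1}$; the latter splitting is why the term $C\nu\int(1+\tau)^{\xi}\|(\s_x,\b{\s}_x,\b{\s}-\s)\|^{2}_{a,\nu-1}d\tau$ appears on the right.

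Finally I would integrate the weighted time-differential inequality in $\tau$ over $[0,t]$, discard the manifestly nonnegative boundary contributions where helpful, and collect terms to arrive at $(\ref{L^{2} time e1})$; the initial data term becomes $C\|(\h_0,\s_0,\b{\h}_0,\b{\s}_0)\|^2_{a,\lambda}$ because at $t=0$ one has $(1+t)^{\xi}=1$ and $\nu\le\lambda$ makes $(1+x)^{\nu}\le(1+x)^{\lambda}$. The main obstacle I anticipate is controlling the weighted flux quadratic form $-\nu\int(1+x)^{\nu-1}(G_1+G_2)\,dx$: one must check that the matrix of the leading-order part (built from $u_+$, $p_1'(\rho_+)$, $p_2'(n_+)$, and the coupling) is negative definite under $M_+>1$, so that it genuinely yields the claimed dissipation rather than merely a controllable sign-indefinite term — this is the analogue of the $\boldsymbol{M_1},\boldsymbol{M_2}$ positivity analysis in Lemma \ref{lem e0} and is where the hypothesis $M_+>1$ (equivalently $u_+^2>\min\{A_1\gamma\rho_+^{\gamma-1},A_2\alpha n_+^{\alpha-1}\}$, together with the decay structure) is essential. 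A secondary technical point is ensuring all the remainder terms that genuinely involve $\h_x$ or $\b{\h}_x$ carry a factor $\delta$ (from the steady-state derivatives) or $\v$ (from the a priori bound), since those unweighted $\|(\h_x,\b{\h}_x)\|^2$ terms are recovered only later via Lemmas \ref{lem e1}--\ref{lem e1'}.
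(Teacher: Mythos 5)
Your overall framework (multiply the energy identity $(\ref{f_0})$ by $W_{a,\nu}=(1+x)^{\nu}$, integrate in $x$, treat the boundary term as in $(\ref{l_1})$, bound $R_{1},R_{2},R_{3}$ via the exponential decay $(\ref{M_{+}>1 stationary solution d})$ and Lemma \ref{lem d2}, then multiply by $(1+\tau)^{\xi}$ and integrate in time) is the same as the paper's. But the step you yourself flag as the main obstacle is handled incorrectly, and as stated it fails. You propose to show that the leading part of $-\nu\int(1+x)^{\nu-1}G_{1}\,dx$ is a negative definite quadratic form in the four variables $(\phi,\psi,\bar\phi,\bar\psi)$ under $M_{+}>1$. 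The quadratic part of $G_{1}$ is block diagonal: a $(\phi,\psi)$ block and a $(\bar\phi,\bar\psi)$ block, with no coupling between the two fluids. Definiteness of the first block requires $u_{+}^{2}>p_{1}'(\rho_{+})$ and of the second $u_{+}^{2}>p_{2}'(n_{+})$, i.e.\ $u_{+}^{2}>\max\{p_{1}'(\rho_{+}),p_{2}'(n_{+})\}$. The hypothesis $M_{+}>1$ only gives $u_{+}^{2}>c_{+}^{2}$, a mass-weighted average of $p_{1}'(\rho_{+})$ and $p_{2}'(n_{+})$, so whenever $c_{+}^{2}<u_{+}^{2}<\max\{p_{1}'(\rho_{+}),p_{2}'(n_{+})\}$ your quadratic form is indefinite and the dissipative term $\nu\|(\phi,\psi,\bar\phi,\bar\psi)\|^{2}_{a,\nu-1}$ cannot be extracted this way (note also that $M_{+}>1$ is not equivalent to $u_{+}^{2}>\min\{A_{1}\gamma\rho_{+}^{\gamma-1},A_{2}\alpha n_{+}^{\alpha-1}\}$, it only implies it).

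The paper's resolution, which is the missing idea, is to decompose $\psi=\bar\psi+(\psi-\bar\psi)$ inside $G_{1}$ before estimating. Then the leading part becomes a quadratic form in the three variables $(\phi,\bar\phi,\bar\psi)$ with the matrix $\boldsymbol{M_{3}}$ of $(\ref{G1})$, whose positive definiteness is exactly equivalent to $M_{+}>1$; the leftover cross terms $-\rho_{+}u_{+}\bar\psi(\psi-\bar\psi)$ and $-A_{1}\gamma\rho_{+}^{\gamma-1}\phi(\psi-\bar\psi)$ are split by Young's inequality into a small multiple of $\|(\phi,\bar\psi)\|^{2}_{a,\nu-1}$ plus $C\nu\|\bar\psi-\psi\|^{2}_{a,\nu-1}$, which is precisely why that term appears on the right-hand side of $(\ref{L^{2} time e1})$ (to be absorbed later by the drag-force dissipation $\int W_{a,\nu}n(\bar\psi-\psi)^{2}dx$ already present on the left). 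Without this decomposition (or an additional, unassumed hypothesis $u_{+}^{2}>\max\{p_{1}'(\rho_{+}),p_{2}'(n_{+})\}$ as in the single-fluid case of \cite{KNZ}), your argument does not close; the rest of your outline (boundary term, $G_{2}$, the $R_{i}$, and the time-weight bookkeeping producing the $C\xi\int(1+\tau)^{\xi-1}\|\cdot\|^{2}_{a,\nu}$ term) is consistent with the paper.
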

\begin{proof}
We multiply $(\ref{f_0})$ by $W_{a,\nu}$, where $W_{a,\nu}:=(1+x)^{\nu}$ is a space weight function. 
We integrate  the resulted equality over $\mathbb{R}_{+}$ to obtain
	\be 
\begin{aligned}
	&\f{d}{dt}\int W_{a,\nu}(\mc{E}_{1}+\mc{E}_{2})-(W_{a,\nu}G_{1})(t,0)-\int W_{a,\nu-1}G_{1}dx-\int W_{a,\nu-1}G_{2}dx
	\\
	&
	 +\int W_{a,\nu} [\mu \s_{x}^{2}+n\b{\s}^{2}_{x}+n(\b{\s}-\s)^{2}]dx
	 \\
	=&
	-\int W_{a,\nu}(R_{1}+R_{2}+R_{3})dx,
\end{aligned}
\label{f_01}
\ee 
where  $\mc{E}_{i}$, $i=1,2$ are defined by  $(\ref{mcE 1})$-$(\ref{mcE 2})$, and $G_{j}$ for  $j=1,2$, $R_{k}$ for $k=1,2,3$  are defined by $(\ref{G})$.\\
First, we estimate terms on the left hand side of (\ref{f_01}). Under the condition $(\ref{boundary d1})$, the second term on the left hand side is estimated as 
\begin{equation}
\begin{split}
&
~~~- (W_{a,\nu}
G_{1})(t,0)
=  |u_{-}|     [  \Phi_{1}(\r(t,0),\wt{\r}(0))+\Phi_{2}(n(t,0),\wt{n}(0)) ]   \geq c (\h^{2}(t,0)+\b{\h}^{2}(t,0)),~~~~~~~
\label{second term e}
\end{split}
\end{equation}
%
%
%
%
We decompose $\s$ as  	$\s=\b{\s}+(\s-\b{\s})$ 	and  use $(\ref{M_{+}>1 stationary solution d})$  to gain
\begin{equation}
\begin{aligned}
&
-\nu \int W_{a,\nu-1}G_{1}dx
\\
\geq&
 \nu\int W_{a,\nu-1}[\f{1}{2}(\h,\b{\h},\b{\s})\boldsymbol{M_{3}}(\h,\b{\h},\b{\s})^{\rm T} -\r_{+}u_{+}\b{\s}(\s-\b{\s}) -A_{1}\gm \r_{+}^{\gm-1}\h(\s-\b{\s})]dx
 \\
&-C(\delta+\v)\| (\h,\s,\b{\h},\b{\s}) \|^{2}_{a,\nu-1} ,
\label{G1}
\end{aligned} 
\end{equation}
where the symmetric matrix $\boldsymbol{M_{3}}$ is denoted by
\begin{equation}
\boldsymbol{M_{3}}=\begin{pmatrix}
-A_{1}\gm \r_{+}^{\gm-2}u_{+} &  0&-A_{1}\gm\r_{+}^{\gm-1}  \\ 
0 &  -A_{2}\a n_{+}^{\a-2} u_{+}  & -A_{2}\a n_{+}^{\a-1}\\ 
-A_{1}\gm\r_{+}^{\gm-1}  & -A_{2}\a n_{+}^{\a-1}&-(\r_{+}+n_{+})u_{+}
	\end{pmatrix}.	
\end{equation}
It is easy to verify that $\boldsymbol{M_{3}}$ is a positive  definite matrix  for $M_{+}>1$.
 Hence,  the estimate of the third term on the left hand side   is obtained under the condition $\varepsilon$, $\delta$ and $\e$ small enough that
\begin{equation} 
\begin{aligned} 
&
 -\nu \int  W_{a,\nu-1 }  G_{1} dx
 \\
\geq&
 	c \nu 	\| (\h,\s,\b{\h},\b{\s} ) \|^{2}_{a,\nu-1} -\e\nu\|(\h,\b{\s}) \|^{2}_{a,\nu-1}-C_{\e}\nu\|\b{\s}-\s \|^{2}_{a,\nu-1} -C(\v+\delta)	\| (\h,\s,\b{\h},\b{\s} ) \|^{2}_{a,\nu-1}
 	\\
\geq &	c \nu	\| (\h,\s,\b{\h},\b{\s} ) \|^{2}_{a,\nu-1}-C\nu \|\b{\s}-\s \|^{2}_{a,\nu-1},
\end{aligned}
\label{G1-1}
\end{equation} 
%
By Young inequality with $0<\e<1$,  the forth and  fifth terms on the left hand side are estimated as 
\be 
\begin{split}
&-\nu \int W_{a,\nu-1} G_{2}dx
\leq \nu \e \| (\s,\b{\s}) \|^{2}_{a,\nu-1}+\nu C_{\e}\| (\s_{x},\b{\s}_{x}) \|^{2}_{a,\nu-1}+C\delta\| (\b{\h},\b{\s}) \|^{2}_{a,\nu-1}.
\end{split}
\ee 
\begin{equation}
\begin{split}
&\quad \int W_{a,\nu}[\mu \s_{x}^{2}+n\b{\s}^{2}_{x}+n(\s-\b{\s})^{2}]dx
\geq 
c\| (\s_{x},\b{\s}_{x},\b{\s}-\s) \|^{2}_{a,\nu}
-C(\v+\delta)\| (\s_{x},\b{\s}_{x},\s-\b{\s}) \|^{2}_{a,\nu}.
\end{split}
\end{equation}
By $(\ref{M_{+}>1 stationary solution d})$ and $(\ref{d'1})$,  it follows from Sobolev inequality and  Cauchy-Schwarz inequality that
\begin{equation}
\begin{aligned}
	|\int W_{a,\nu}(R_{1}+R_{2}+R_{3})dx|
	\leq &
	C\delta \int e^{-\f{c_{0}}{2}x}(\h^{2}+\s^{2}+\b{\h}^{2}+\b{\s}^{2}+|\b{\s}-\s|^{2}) dx
	\\	
	 \leq&
	  C\delta\| (\h_{x},\s_{x},\b{\h}_{x},\b{\s}_{x},\b{\s}-\s) \|^{2}+C\delta(\h^{2}(t,0)+\b{\h}^{2}(t,0)).
	\label{mathcal{I}_{2} e}
	\end{aligned}
	\end{equation}
%
Finally, with  $\e$, $\delta$ and  $\varepsilon$ suitably small, the substitution of $(\ref{second term e})$-$(\ref{mathcal{I}_{2} e})$ into $(\ref{f_01})$
 leads to
\begin{equation}
\begin{aligned}
&
 \frac{d}{dt} \int  W_{a,\nu}	(\mc{E}_{1}+\mc{E}_{2})	dx
+c\nu\| (\h,\s,\b{\h},\b{\s})  \|^{2}_{a,\nu-1}   	+c 	\| (\psi_{x},\b{\s}_{x},\s-\b{\s}) \|^{2}_{a,\nu}+c(\h^{2}(t,0)+\b{\h}^{2}(t,0))
\\
\leq& 
C \delta 	\|(\h_{x},\b{\h}_{x}) \|^{2}+C\nu \| (\s_{x},\b{\s}_{x},\s-\b{\s}) \|^{2}_{a,\nu-1}.~~~~~~~
\label{w e0}
\end{aligned}
\end{equation}
Multiplying (\ref{w e0}) by $(1+\tau)^{\xi}$ and integrating the resulted equation in $\tau$ over $[0,t]$, we gain the desired estimate $(\ref{L^{2} time e1})$.
\end{proof}
Similar to Lemmas \ref{lem e1}-\ref{lem e1'}, we get the following high order  weighted  estimates of  $(\h,\s,\b{\h},\b{\s})$. The details are omitted.
\begin{lem}
	\label{lem h_{x} e1}
Under the same conditions in Proposition $\ref{prop M>1 time decay}$, then the solution $(\h,\s,\b{\h},\b{\s})$ to the IBVP $(\ref{f1})$-$(\ref{boundary d1})$ satisfies  for $t \in[0,T]$ that
		\begin{equation}
	\begin{aligned}
	&
 (1+t)^{\xi}	\| (\h_{x},\b{\h}_{x}) \|^{2}	+\int_{0}^{t}(1+\tau)^{\xi}\| (\h_{x},\b{\h}_{x}) \|^{2}d \tau
 \\	
\leq&
 C(\|(\h_{0},\s_{0},\b{\h}_{0},\b{\s}_{0} )\|^{2}_{a,\lambda}+\|(\h_{0x},\b{\h}_{0x}) \|^{2})
+C \nu	\int_{0}^{t} (1+\tau)^{\xi}\|  (\s_{x},\b{\s}_{x},\s-\b{\s}) \|^{2}_{a,\nu-1}d\tau
 \\
& 
+C \varepsilon  \int_{0}^{t}(1+\tau)^{\xi }	\| (\s_{xx},\b{\s}_{xx}) \|^{2}_{a,\nu}	d \tau 
+C \xi  \int_{0}^{t} (1+\tau)^{\xi-1}    (\| (\h,\s,\b{\h},\b{\s}) \|^{2}_{a,\nu}+\| (\h_{x},\b{\h}_{x}) \|^{2}  ) d\tau ,
\label{time e1-1}
\end{aligned}
\end{equation}
with $\xi\geq 0$.
\end{lem}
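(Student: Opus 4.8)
The plan is to run the weighted counterpart of the argument already used for Lemma~\ref{lem e1}. First I would differentiate $(\ref{f1})_{1}$ and $(\ref{f1})_{3}$ in $x$, multiply the two resulting identities by $\mu\h_{x}$ and $\b{\h}_{x}$, multiply $(\ref{f1})_{2}$ by $\wt{\r}^{2}\h_{x}$ and $(\ref{f1})_{4}$ by $\wt{n}\b{\h}_{x}$, then add all four and integrate over $\mb{R}_{+}$; as in $(\ref{1f})$ the second-order terms $\pm\mu\wt{\r}\h_{x}\s_{xx}$ and $\pm\wt{n}\b{\h}_{x}\b{\s}_{xx}$ cancel, leaving the time derivative of the functional $E_{1}:=\int(\tfrac{\mu}{2}\h_{x}^{2}+\tfrac12\b{\h}_{x}^{2}+\wt{\r}^{2}\h_{x}\s+\wt{n}\b{\h}_{x}\b{\s})\,dx$, a nonnegative boundary flux $-u_{-}(\mu\h_{x}^{2}(t,0)+\b{\h}_{x}^{2}(t,0))/2$ (using $u_{-}<0$ and $(\ref{boundary d1})$), and the coercive pressure integral, which by $(\ref{M_{+}>1 stationary solution d})$ and $(\ref{infty 1})$ dominates $\r_{+}p'_{1}(\r_{+})\|\h_{x}\|^{2}+p'_{2}(n_{+})\|\b{\h}_{x}\|^{2}$ up to a $C(\v+\delta)\|(\h_{x},\b{\h}_{x})\|^{2}$ error.

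Next I would estimate the right-hand side contributions $J_{1},\dots,J_{4}$ exactly as in Lemma~\ref{lem e1}: by $(\ref{M_{+}>1 stationary solution d})$ every term carrying a derivative of the steady state is a $\delta e^{-c_{0}x}$-weighted quantity, which $(\ref{d'1})$ of Lemma~\ref{lem d2} turns into $C\delta(\h^{2}(t,0)+\b{\h}^{2}(t,0))+C\delta\|(\s_{x},\b{\s}_{x},\b{\h}_{x})\|^{2}$; Sobolev's inequality $(\ref{infty 1})$ and Young's inequality dispose of the cubic terms, the only second-order residual $\mu\wt{\r}^{2}(\tfrac{1}{\r}-\tfrac{1}{\wt{\r}})\h_{x}\s_{xx}$ being absorbed into $C\v\|(\s_{xx},\b{\s}_{xx})\|^{2}$ via $\|\h\|_{L^{\infty}}\le\sqrt{2}\v$. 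This reproduces the differential inequality $(\ref{e1})$.

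I would then multiply $(\ref{e1})$ by the time weight $(1+\tau)^{\xi}$ and integrate over $[0,t]$. Since $\f{d}{d\tau}\big[(1+\tau)^{\xi}E_{1}\big]=(1+\tau)^{\xi}\f{d}{d\tau}E_{1}+\xi(1+\tau)^{\xi-1}E_{1}$ and $|E_{1}|\le C(\|(\h_{x},\b{\h}_{x})\|^{2}+\|(\s,\b{\s})\|^{2})$ by Young's inequality, the weight-derivative produces precisely the term $C\xi\int_{0}^{t}(1+\tau)^{\xi-1}(\|(\h_{x},\b{\h}_{x})\|^{2}+\|(\h,\s,\b{\h},\b{\s})\|^{2}_{a,\nu})\,d\tau$, using the trivial bound $\|f\|\le\|f\|_{a,\nu}$ valid for $\nu\ge0$. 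After the same weight bound, the time integral of the boundary term and of the first right-hand term $\int_{0}^{t}(1+\tau)^{\xi}\|(\s_{x},\b{\s}_{x},\b{\s}-\s)\|^{2}\,d\tau$ is subsumed into the dissipation already gained in Lemma~\ref{lem h e1} once the present inequality is combined with a small coefficient (this is also where the $\nu$-weighted terms displayed on the right of $(\ref{time e1-1})$ are carried from), the $C(\v+\delta)$-term is upgraded to $C\v\int_{0}^{t}(1+\tau)^{\xi}\|(\s_{xx},\b{\s}_{xx})\|^{2}_{a,\nu}\,d\tau$ and kept on the right with its small factor, and $E_{1}(0)\le C\|(\h_{0x},\b{\h}_{0x})\|^{2}$. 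Collecting these estimates gives $(\ref{time e1-1})$.

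The one genuine obstacle, inherited verbatim from Lemma~\ref{lem e1}, is the precise cancellation of the $\s_{xx}$ and $\b{\s}_{xx}$ terms secured by the multiplier choice $\wt{\r}^{2}\h_{x}$, $\wt{n}\b{\h}_{x}$ (together with the fact that $\h_{x}$ carries no usable spatial decay, so this estimate must stay unweighted while the dissipation it feeds on in Lemma~\ref{lem h e1} is weighted); everything else is routine once $\v$ and $\delta$ are taken small, and the genuinely new feature—the $\xi(1+\tau)^{\xi-1}$ contributions—is not an obstacle here but exactly the mechanism that, in the induction of Proposition~\ref{prop M>1 time decay}, trades one power of the time weight for one power of the spatial weight.
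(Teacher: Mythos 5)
Your proposal is correct and is essentially the proof the paper intends (the paper omits the details, saying only that the argument is ``similar to Lemmas \ref{lem e1}--\ref{lem e1'}''): you rederive the differential inequality $(\ref{e1})$ with the same multipliers and cancellation of the $\s_{xx},\b{\s}_{xx}$ terms, multiply by $(1+\tau)^{\xi}$, integrate in time, and close by combining with Lemma \ref{lem h e1}, which absorbs the unweighted $\|(\s_x,\b{\s}_x,\b{\s}-\s)\|^2$ dissipation, the $\delta(\h^2+\b{\h}^2)(\tau,0)$ boundary terms and the $(1+t)^{\xi}\|(\s,\b{\s})\|^2$ piece of the functional, and is the source of the $\nu$-weighted and $\|\cdot\|^2_{a,\l}$ terms on the right of $(\ref{time e1-1})$. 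The only nitpick is that $E_1(0)$ also contains the cross terms, so it is bounded by $C(\|(\h_{0x},\b{\h}_{0x})\|^2+\|(\s_0,\b{\s}_0)\|^2)$ rather than by $C\|(\h_{0x},\b{\h}_{0x})\|^2$ alone, which is harmless since $\|(\s_0,\b{\s}_0)\|^2\le\|(\h_0,\s_0,\b{\h}_0,\b{\s}_0)\|^2_{a,\l}$ already appears on the right-hand side.
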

\begin{lem}\label{lem s_{x} e1}
Under the same conditions in Proposition $\ref{prop M>1 time decay}$ hold, then the solution $(\h,\s,\b{\h},\b{\s})$ to the IBVP $(\ref{f1})$-$(\ref{boundary d1})$ satisfies for $t \in[0,T]$ that
\begin{equation}
\begin{aligned}
&
(1+t)^{\xi}\| (\s_{x},\b{\s}_{x}) \|^{2}+\int_{0}^{t}(1+\tau)^{\xi}\| (\s_{xx},\b{\s}_{xx}) \|^{2}d \tau
\\
\leq&
C(\|(\h_{0},\s_{0},\b{\h}_{0},\b{\s}_{0} )\|^{2}_{a,\lambda}+\|(\h_{0x},\b{\h}_{0x},\s_{0x},\b{\s}_{0x}) \|^{2})
+C \nu	\int_{0}^{t} (1+\tau)^{\xi}\|  (\s_{x},\b{\s}_{x},\s-\b{\s}) \|^{2}_{a,\nu-1}d\tau
\\
&
+C \xi \int_{0}^{t} (1+\tau)^{\xi-1}    (\| (\h,\s,\b{\h},\b{\s}) \|^{2}_{a,\nu}+\| (\h_{x},\s_{x},\b{\h}_{x},\b{\s}_{x}) \|^{2}  ) d\tau
\label{time e1-2}
\end{aligned}
\end{equation}
with $\xi\geq 0$.
\end{lem}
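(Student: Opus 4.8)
The plan is to prove the time-weighted estimate $(\ref{time e1-2})$ by running the multiplier argument of Lemma~\ref{lem e1'} and then closing it with the weight $(1+\tau)^{\xi}$ by means of Lemmas~\ref{lem h e1} and~\ref{lem h_{x} e1}. No spatial weight enters this step; the spatially weighted norms on the right-hand side of $(\ref{time e1-2})$ are only those inherited from the two preceding lemmas, and the leftover $\nu$- and $\xi$-terms are kept because the absorption is completed later at the level of Proposition~\ref{prop M>1 time decay}.

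First I would multiply $(\ref{f1})_{2}$ by $-\psi_{xx}$ and $(\ref{f1})_{4}$ by $-\b{\s}_{xx}$, add the two and integrate over $\mb{R}_{+}$, reproducing verbatim the identity $(\ref{psi_{xx} space w1})$ with the same $K_{1},K_{2},K_{3}$; the boundary contributions from $u\psi_{x}\psi_{xx}$ and $v\b{\s}_{x}\b{\s}_{xx}$ vanish since $(\s,\b{\s})(t,0)=(0,0)$ by $(\ref{boundary d1})$. Lower bounding the dissipation as in $(\ref{s_{xx} e})$ through the splitting $\frac{1}{\rho}=(\frac{1}{\rho}-\frac{1}{\wt{\r}})+(\frac{1}{\wt{\r}}-\frac{1}{\r_{+}})+\frac{1}{\r_{+}}$, and estimating $K_{1},K_{2},K_{3}$ exactly as in the proof of Lemma~\ref{lem e1'} (using $(\ref{M_{+}>1 stationary solution d})$, the Sobolev inequality, Cauchy--Schwarz and the smallness of $\delta$ and $\v$), one arrives for $\delta,\v$ small at the differential inequality $(\ref{psi_{xx} e1'})$.

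Next I would multiply $(\ref{psi_{xx} e1'})$ by $(1+\tau)^{\xi}$ and integrate in $\tau$ over $[0,t]$. Integration by parts in the time-derivative term yields $(1+t)^{\xi}\|(\s_{x},\b{\s}_{x})\|^{2}-\|(\s_{0x},\b{\s}_{0x})\|^{2}$ together with the extra term $-\xi\int_{0}^{t}(1+\tau)^{\xi-1}\|(\s_{x},\b{\s}_{x})\|^{2}\,d\tau$, which is moved to the right and is one contribution to the $C\xi\int_{0}^{t}(1+\tau)^{\xi-1}(\cdots)\,d\tau$ term in $(\ref{time e1-2})$. It then remains to absorb the right-hand side: the term $\int_{0}^{t}(1+\tau)^{\xi}\|(\h_{x},\b{\h}_{x})\|^{2}\,d\tau$ is controlled by the high-order weighted estimate $(\ref{time e1-1})$ of Lemma~\ref{lem h_{x} e1}; since $\nu\geq0$ one has $\|(\s_{x},\b{\s}_{x},\b{\s}-\s)\|^{2}\leq\|(\s_{x},\b{\s}_{x},\b{\s}-\s)\|^{2}_{a,\nu}$, so the remaining dissipative norm produced by $K_{1}$ and the boundary term $\int_{0}^{t}(1+\tau)^{\xi}(\h^{2}(t,0)+\b{\h}^{2}(t,0))\,d\tau$ are controlled by the basic weighted estimate $(\ref{L^{2} time e1})$ of Lemma~\ref{lem h e1}; finally the initial data, the $C\nu\int_{0}^{t}(1+\tau)^{\xi}\|(\s_{x},\b{\s}_{x},\b{\s}-\s)\|^{2}_{a,\nu-1}\,d\tau$ term and the remaining $C\xi\int_{0}^{t}(1+\tau)^{\xi-1}(\cdots)\,d\tau$ term produced by those two lemmas feed directly into the right-hand side of $(\ref{time e1-2})$. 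Collecting everything yields $(\ref{time e1-2})$.

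The step requiring the most care --- it is bookkeeping rather than a genuine obstruction --- is to check that the absorptions are non-circular. In $K_{2}$ the cross term $\mu\wt{\r}^{2}(\frac{1}{\rho}-\frac{1}{\wt{\r}})\h_{x}\s_{xx}$ and in $K_{3}$ the term $\frac{(\b{\h}\b{\s}_{x})_{x}}{n}\b{\s}_{xx}$ generate contributions of size $C(\v+\delta)\|(\s_{xx},\b{\s}_{xx})\|^{2}$, which can only be absorbed into the left-hand dissipation $c\|(\s_{xx},\b{\s}_{xx})\|^{2}$ because $\delta$ and $\v$ are small; and the first-order dissipative norms produced by $K_{1}$ must be handed back to Lemmas~\ref{lem h e1} and~\ref{lem h_{x} e1} rather than to this lemma itself. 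Since those two lemmas are proved beforehand and their right-hand sides involve only an $\v$-small multiple of the second-order dissipation that $(\ref{time e1-2})$ supplies, the three estimates close simultaneously when combined in Proposition~\ref{prop M>1 time decay}, and the argument runs exactly as in the unweighted case $\xi=0$, which is Lemma~\ref{lem e1'}.
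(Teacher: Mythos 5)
Your overall route is exactly the one the paper intends: the paper omits the proof of this lemma, remarking only that it follows as in Lemmas \ref{lem e1}--\ref{lem e1'}, and your plan --- rederive the differential inequality $(\ref{psi_{xx} e1'})$ by the same multipliers $-\s_{xx}$, $-\b{\s}_{xx}$, multiply by $(1+\tau)^{\xi}$, integrate in time, and control the resulting right-hand side through Lemmas $\ref{lem h e1}$ and $\ref{lem h_{x} e1}$ --- is precisely that, and your bookkeeping of the data terms, the $\nu$-terms and the $\xi$-terms reproduces the right-hand side of $(\ref{time e1-2})$. (A minor imprecision: the boundary terms in the identity $(\ref{psi_{xx} space w1})$ come from integrating $\s_{t}\s_{xx}$ and $\b{\s}_{t}\b{\s}_{xx}$ by parts, and they vanish because $\s_{t}(t,0)=\b{\s}_{t}(t,0)=0$; the transport terms $u\s_{x}\s_{xx}$, $v\b{\s}_{x}\b{\s}_{xx}$ are kept inside $K_{1}$ and generate no boundary contribution.)

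One step needs repair. You assert that Lemma $\ref{lem h_{x} e1}$ hands back only ``an $\v$-small multiple of the second-order dissipation that $(\ref{time e1-2})$ supplies.'' But as stated, $(\ref{time e1-1})$ carries the term $C\v\int_{0}^{t}(1+\tau)^{\xi}\|(\s_{xx},\b{\s}_{xx})\|^{2}_{a,\nu}\,d\tau$, i.e.\ with the spatial weight $(1+x)^{\nu}$; since $\|\cdot\|\leq\|\cdot\|_{a,\nu}$, this cannot be absorbed by the unweighted dissipation $c\int_{0}^{t}(1+\tau)^{\xi}\|(\s_{xx},\b{\s}_{xx})\|^{2}\,d\tau$ on your left-hand side when $\nu>0$, so the absorption step fails if $(\ref{time e1-1})$ is quoted literally, and $(\ref{time e1-2})$ has no such term on its right to accommodate it. The defect is inherited from the statement of $(\ref{time e1-1})$ rather than from your strategy: proving Lemma $\ref{lem h_{x} e1}$ as the time-weighted analogue of Lemma \ref{lem e1} (no spatial weight enters that computation, cf.\ $(\ref{1'-order time e1})$) actually produces the unweighted term $C(\v+\delta)\int_{0}^{t}(1+\tau)^{\xi}\|(\s_{xx},\b{\s}_{xx})\|^{2}\,d\tau$, which your left-hand side does absorb for $\v,\delta$ small. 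So you should either invoke that sharper (unweighted) form of $(\ref{time e1-1})$, or note explicitly that the spatial weight on the $\s_{xx}$-term there is superfluous; with that adjustment your argument closes and gives $(\ref{time e1-2})$ as stated, with no circularity since Lemmas $\ref{lem h e1}$ and $\ref{lem h_{x} e1}$ are established beforehand.
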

\textit{\underline{Proof of Proposition \ref{prop M>1 time decay}}} 
For $\nu\in[0,\l]$ and $\xi\geq0$, it follows from  Lemmas $\ref{lem h e1}$-$\ref{lem s_{x} e1}$ that
\be \begin{aligned}
&
 (1+ t)^{\xi}(\| (\h,\s,\b{\h},\b{\s}) \|^{2}_{a,\nu}+\| (\h_{x},\s_{x},\b{\s}_{x},\b{\h}_{x}) \|^{2})+\nu \int(1+ t)^{\xi}\| (\h,\s,\b{\h},\b{\s}) \|^{2}_{a,\nu-1}d\tau
 \\
 &
 +\int_{0}^{t}(1+\tau)^{\xi}\| (\s_{x},\b{\s}_{x},\s-\b{\s})\|^{2}_{a,\nu}d\tau+\int_{0}^{t}(1+\tau)^{\xi}\|(\h_{x},\s_{xx},\b{\h}_{x},\b{\s}_{xx}) \|^{2}
d \tau
\\
\leq&
 C (\|(\h_{0},\s_{0},\b{\h}_{0},\b{\s}_{0}) \|^{2}_{a,\l}+\| (\h_{0x},\s_{0x},\b{\h}_{0x},\b{\s}_{0x}) \|^{2})+C\nu \int_{0}^{t}  ( 1  +\tau )^{\xi} \| (\s_{x},\b{\s}_{x},\b{\s}-\s) \|^{2}_{a,\nu-1}d\tau
 \\
&
 +C\xi\int_{0}^{t}(1+\tau)^{\xi-1}(\| (\h,\s,\b{\h},\b{\s}) \|^{2}_{a,\nu}+\| (\h_{x},\s_{x},\b{\s}_{x},\b{\h}_{x}) \|^{2}),
\label{e1-1}
\end{aligned}
\ee 
where $C>0$ is a generic positive constant independent of $T,\nu,$  and $ \xi$. Hence, applying similar   induction arguments as in  \cite{KM1,NM,CHS}  to (\ref{e1-1}), we  gain the desired estimate  (\ref{e__1}). 

Indeed, for any $\l>0$ and $k=0,1,2,...[\l]$, we have 
\be \begin{aligned}
	&
	(1+ t)^{k}(\| (\h,\s,\b{\h},\b{\s}) \|^{2}_{a,\l-k}+\| (\h_{x},\s_{x},\b{\s}_{x},\b{\h}_{x}) \|^{2})+\nu \int(1+ t)^{k}\| (\h,\s,\b{\h},\b{\s}) \|^{2}_{a,\l-k-1}d\tau
	\\
	&
	+\int_{0}^{t}(1+\tau)^{k}\| (\s_{x},\b{\s}_{x},\s-\b{\s})\|^{2}_{a,\l-k}d\tau+\int_{0}^{t}(1+\tau)^{k}\|(\h_{x},\s_{xx},\b{\h}_{x},\b{\s}_{xx}) \|^{2}
	d \tau
	\\
	\leq&
	C (\|(\h_{0},\s_{0},\b{\h}_{0},\b{\s}_{0}) \|^{2}_{a,\l}+\| (\h_{0x},\s_{0x},\b{\h}_{0x},\b{\s}_{0x}) \|^{2}),
	\label{e1-2}
\end{aligned}
\ee 
and 
\be \begin{aligned}
	&
	(1+ t)^{k}\| (\h,\s,\b{\h},\b{\s}) \|^{2}_{1}
	+\int_{0}^{t}(1+\tau)^{k}\| (\h_{x},\s_{x},\s_{xx},\b{\s}_{x},\b{\h}_{x},\b{\s}_{xx},\s-\b{\s})\|^{2}d\tau
	\\
	\leq&
	C (\|(\h_{0},\s_{0},\b{\h}_{0},\b{\s}_{0}) \|^{2}_{a,\l}+\| (\h_{0x},\s_{0x},\b{\h}_{0x},\b{\s}_{0x}) \|^{2}).
	\label{e1-3}
\end{aligned}
\ee 
To prove $(\ref{e1-2})$ and $(\ref{e1-3})$, we apply similar induction arguments   as in  \cite{KM1,NM,CHS}  to (\ref{e1-1}).

Step 1. Taking $\xi=0$, $\nu=\l$ in $(\ref{e1-1})$ and using $(\ref{e})$, we have $(\ref{e1-2})$ and $(\ref{e1-3})$ for $k=0$. Therefore, $(\ref{e1-2})$ and $(\ref{e1-3})$ hold for $0<\l<1$.

Step 2. Taking $\xi=1$, $\nu=0$ in $(\ref{e1-1})$ and using $(\ref{e1-2})$ with $k=0$, we have $(\ref{e1-3})$ with $k=1$. Then, taking $\xi=1$, $\nu=\l-1$ in $(\ref{e1-1})$ and using  $(\ref{e1-3})$  with $k=1$ and $(\ref{e1-2})$ with $k=0$, we obtain the desired estimate $(\ref{e1-2})$ with $k=1$. Therefore, the proof is finished for $1\leq\l<2$.

Step 3. We repeat the same procedure as in Step 2. The estimate $(\ref{e1-1})$ (with $\xi=2$, $\nu=0$) together with $(\ref{e1-3})$ (with $k=1$) lead to $(\ref{e1-3})$ (with $k=2$). Also, $(\ref{e1-1})$ (with $\xi=2$, $\nu=\l-2$) together with $(\ref{e1-3})$ (with $k=2$) and $(\ref{e1-2})$ (with $k=1$) lead to $(\ref{e1-2})$ (with $k=2$), which proves the estimates $(\ref{e1-2})$ and $(\ref{e1-3})$ for $2\leq\l<3$. 

Repeating the same procedure, we get the desired estimates $(\ref{e1-2})$ and $(\ref{e1-3})$ for any $\l>0$.

If $\l>0$ is integer, we obtain $(\ref{e__1})$  from $(\ref{e1-2})$ letting $k=\l$. 

If $\l>0$ is not  integer, we obtain $(\ref{e__1})$ as follows. \\
Taking $\nu=0$ in $(\ref{e1-1})$, we have 
\be \begin{aligned}
	&
	(1+ t)^{\xi}\| (\h,\s,\b{\h},\b{\s}) \|^{2}_{1}
	+\int_{0}^{t}(1+\tau)^{\xi}\| (\h_{x},\s_{x},\s_{xx},\b{\h}_{x},\b{\s}_{x},\b{\s}_{xx},\s-\b{\s})\|^{2}d\tau
	\\
	\leq&
	C (\|(\h_{0},\s_{0},\b{\h}_{0},\b{\s}_{0}) \|^{2}_{a,\l}+\| (\h_{0x},\s_{0x},\b{\h}_{0x},\b{\s}_{0x}) \|^{2})
	+C\xi\int_{0}^{t}(1+\tau)^{\xi-1}\| (\h,\s,\b{\h},\b{\s}) \|^{2}_{1}d\tau.
	\label{e1-4}
\end{aligned}
\ee 
Using $(\ref{e1-2})$ with $k=[\l]$ and taking $s=1-(\l-[\l])$, we have 
\begin{align}
	&\int_{0}^{t}(1+\tau)^{\xi-1}\| (\h,\s,\b{\h},\b{\s}) \|^{2}_{1}d\tau
	\notag
	\\
	\leq &
	 \int_{0}^{t}(1+\tau)^{\xi-1-[\l]}\{
	  ( (1+t)^{[\l]}\| (\h,\s,\b{\h},\b{\h}) \|^{2}_{a,\l-[\l]})^{s} ( (1+t)^{[\l]}\| (\h,\s,\b{\h},\b{\h}) \|^{2}_{a,\l-[\l]-1})^{1-s}
	  \notag
	  \\
	  &
	  +
(1+t)^{[\l]}\|(\h_{x},\s_{x},\b{\h}_{x},\b{\s}_{x})  \|^{2}\}d\tau 
\notag
	 \\
	 \leq &
	  C (\|(\h_{0},\s_{0},\b{\h}_{0},\b{\s}_{0}) \|^{2}_{a,\l}+\| (\h_{0x},\s_{0x},\b{\h}_{0x},\b{\s}_{0x}) \|^{2})^{s}
	  \int_{0}^{t}(1+\tau)^{\xi-1-[\l]} ( (1+t)^{[\l]} \|(\h_{x},\s_{x},\b{\h}_{x},\b{\s}_{x})\|^{2}
	  \notag
	  \\
	  &
	  +(1+t)^{[\l]} \| (\h,\s,\b{\h},\b{\h}) \|^{2}_{a,\l-[\l]-1})^{1-s}d\tau 
	  \notag
	  \end{align}
	  \begin{align} 
	  \leq&
	   C (\|(\h_{0},\s_{0},\b{\h}_{0},\b{\s}_{0}) \|^{2}_{a,\l}+\| (\h_{0x},\s_{0x},\b{\h}_{0x},\b{\s}_{0x}) \|^{2}) (\int_{0}^{t}(1+\tau)^{\f{\xi-1-[\l]}{1+[\l]-\l}}d\tau)^{1+[\l]-\l}
	   \notag
	   \\
	   \leq &
	   C (\|(\h_{0},\s_{0},\b{\h}_{0},\b{\s}_{0}) \|^{2}_{a,\l}+\| (\h_{0x},\s_{0x},\b{\h}_{0x},\b{\s}_{0x}) \|^{2})(1+t)^{\theta},
	\label{e1-5}
\end{align}
where we take $\xi=\l+\theta(1+[\l]-\l)$ and $\theta>0$. 
\subsection{Convergence rate of  sonic steady-state}
\label{sec: $M'_{+}=1$}
The  function space $Y_{W}(0,T)$ for $T>0$ is denoted by
\begin{equation}
\begin{aligned}
Y_{W}(0,T):=\{
~(\h,\s,\b{\h},\b{\s})~ |~&( \h,\s,\b{\h},\b{\s}) \in C( [0,T]; H_{W}^{1}(\mb{R}_{+}) ), 
\\
&
( \h_{x},\b{\h}_{x}) \in L^{2}([0,T]; L_{W}^{2}(\mb{R}_{+}) ), ~(\psi_{x},\b{\s}_{x}) \in L^{2}([0,T]; H_{W}^{1}(\mb{R}_{+}) )~
\}.
\end{aligned}
\end{equation}
\begin{prop}
\label{prop M_{+}=1 time decay }
Assume that $1\leq \l<\l^{*}$ with $\l^{*}:=2+\sqrt{8+\f{1}{1+b^{2}}}$,
$b:=\f{\r_{+}(u_{+}^{2}-p'_{1}(\r_{+}))}{|u_{+}|\sqrt{(\mu+n_{+})n_{+}}}$,  and that the same conditions in Theorem $\ref{thm time decay}$   hold for $M_{+}=1$. Let $(\h,\s,\b{\h},\b{
	\s})$ be a solution to the IBVP $(\ref{f1})$-$(\ref{boundary d1})$ satisfying 
$ (\h,\s,\b{\h},\b{\s}) \in Y_{\sg^{-\l}}(0,T)  $  for any time $T>0$. Then for  arbitrary $\nu \in (0, \lambda]$, there exist positive constants $\varepsilon>0$ and $C>0$ independent of $T$ such that if 
\be 
\sup_{0 \leq t \leq T} \|\sg^{-\f{\l}{2}} (\h,\s,\b{\h},\b{\s})(t)  \|_{1}+\delta^{\f{1}{2}}\leq \v 
\label{M_{+}=1 prior a}
\ee 
is satisfied, it holds for arbitrary $t \in [0, T]$ that 	
\begin{equation}
\begin{aligned}
&
( 1    +\delta t )^{\frac{\lambda    -\nu}{2}   +\beta}	\| \sg^{-\f{\nu}{2}}(\h,\s,\b{\h},\b{\s}) \|^{2}_{1}  	+	\int^{t}_{0}	( 1   +\delta \tau )^{  \frac{\lambda-\nu}{2} +\beta}	\| \sg^{-\f{\nu-2}{2}}(\h,\s,\b{\h},\b{\s}) \|^{2}d\tau
\\
&
+	\int^{t}_{0}  ( 1  +\delta \tau  )^{  \frac{\lambda-\nu }{2} +\beta}	\| \sg^{-\f{\nu}{2}}( \h_{x},\s_{x}, \b{\h}_{x},\b{\s}_{x}) \|^{2}   d\tau 
+	\int^{t}_{0}  ( 1  +\delta \tau  )^{  \frac{\lambda-\nu }{2} +\beta}	\| \sg^{-\f{\nu}{2}}( \s_{xx}, \b{\s}_{xx},\b{\s}-\s) \|^{2}   d\tau
\\
\leq &
C(  1   +\delta t )^{ \beta} 	\| \sg^{-\f{\l}{2}}(\h_{0},\s_{0},\h_{0x},\s_{0x},\b{\h}_{0},\b{\s}_{0},\b{\h}_{0x},\b{\s}_{0x}
)\|^{2},~~~~~~~~~~~~~~~~~~~~~~~~~~~~~~~~~~~~~~~~~
\end{aligned}
\end{equation}
with  $\beta>0$.
\end{prop}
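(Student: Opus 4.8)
The plan is to follow the blueprint of the proof of Proposition~\ref{prop M>1 time decay}, with the algebraic space weight $W_{a,\nu}=(1+x)^{\nu}$ replaced by the degenerate weight $\sg^{-\nu}$ tuned to the algebraic profile $(\ref{sigma})$--$(\ref{sig})$ of the sonic steady-state, and the time weight $(1+\tau)^{\xi}$ replaced by $(1+\delta\tau)^{\xi}$; since $\sg(0)\sim\delta$, the natural bookkeeping parameter is $\delta^{\f12}$, as in $(\ref{M_{+}=1 prior a})$. The first step is the basic weighted $L^{2}$ estimate: I multiply the identity $(\ref{f_0})$ by $\sg^{-\nu}$, $\nu\in(0,\l]$, and integrate over $\mb{R}_{+}$. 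The only genuinely new term compared with $(\ref{f_01})$ comes from $\p_{x}\sg^{-\nu}=-\nu\sg^{-\nu-1}\sg_{x}=\nu a\sg^{-(\nu-1)}+O(\sg^{-(\nu-2)})$ (using $\sg_{x}=-a\sg^{2}+O(|\sg|^{3})$ from $(\ref{sg0})$), so that $-\int(\p_{x}\sg^{-\nu})G_{1}\,dx$ produces to leading order $\nu a\int\sg^{-(\nu-1)}G_{1}\,dx$, the dissipation being one power of $\sg$ ``better'' than the energy weight $\sg^{-\nu}$; this is why each decrement $\nu\to\nu-2$ is eventually traded for one extra power of $(1+\delta\tau)$, accounting for the exponent $\f{\l-\nu}{2}+\beta$. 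The outflow condition $(\ref{boundary d1})$ keeps the boundary term $-(\sg^{-\nu}G_{1})(t,0)=|u_{-}|\sg^{-\nu}(0)[\Phi_{1}+\Phi_{2}]\geq c\,\delta^{-\nu}(\h^{2}(t,0)+\b{\h}^{2}(t,0))\geq0$, and likewise all the weighted viscous boundary terms arising later have a favourable sign.

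The heart of the matter is to extract coercivity from $\nu a\int\sg^{-(\nu-1)}G_{1}\,dx$ in the sonic case $M_{+}=1$, where --- unlike the case $M_{+}>1$ in Lemma~\ref{lem h e1} --- the matrix $\boldsymbol{M_{3}}$ of $(\ref{G1})$ degenerates along the acoustic direction. Here I decompose $\s=\b{\s}+(\s-\b{\s})$ (as in \cite{LWW}) and perform the linear change of variables $(\h,\b{\h},\b{\s})^{\rm T}=\boldsymbol{P}(\hat{\r},\hat{n},\hat{v})^{\rm T}$ announced in the introduction, chosen so that the non-degenerate part of the quadratic form in $G_{1}$ becomes $\hat{\l}_{1}\hat{\r}^{2}+\hat{\l}_{2}\hat{n}^{2}$ with $\hat{\l}_{1},\hat{\l}_{2}>0$. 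This yields the dissipations $c\nu\|\sg^{-\f{\nu-1}{2}}(\hat{\r},\hat{n})\|^{2}$, the weaker $c\nu\|\sg^{-\f{\nu-2}{2}}\hat{v}\|^{2}$ along the acoustic direction, the drag dissipation $c\|\sg^{-\f{\nu}{2}}(\b{\s}-\s)\|^{2}$, and the viscous dissipation $c\|\sg^{-\f{\nu}{2}}(\s_{x},\b{\s}_{x})\|^{2}$. The two remaining dangerous contributions are the cross term $\nu a\int\sg^{-(\nu-1)}\f{\r_{+}(u_{+}^{2}-p'_{1}(\r_{+}))}{|u_{+}|}\hat{v}(\s-\b{\s})\,dx$ and the term $\int\sg^{-\nu}R_{2}\,dx$ from $(\ref{cross t})$; since $(\wt{\r}\wt{u}^{2}+p_{1}(\wt{\r}))_{x}=\mu\wt{u}_{xx}+\wt{n}(\wt{v}-\wt{u})=O(\sg^{2})$ by $(\ref{stationary f})_{2}$, $(\ref{sigma})$ and $(\ref{sigma 1})$, the latter is of size $O(\|\sg^{-\f{\nu-2}{2}}(\h,\s,\b{\h},\b{\s})\|^{2})$. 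Both are absorbed by a Young inequality distributing the weight $\sg^{-(\nu-1)}$ between $\sg^{-\nu}$ and $\sg^{-(\nu-2)}$ --- recall $-(\nu-1)=\f12(-\nu)+\f12(-(\nu-2))$ --- together with crucial cancellations between $R_{2}$ and the transport/pressure parts of the flux; this balancing closes exactly when $\nu\leq\l<\l^{*}=2+\sqrt{8+\f{1}{1+b^{2}}}$ with $b$ as in $(\ref{a})$, a discriminant-type condition which reduces to $\l^{*}=5$ in the isentropic situation $u_{+}^{2}=p'_{1}(\r_{+})=p'_{2}(n_{+})$. I expect this step --- identifying $\boldsymbol{P}$ and verifying the sharp constraint $\l<\l^{*}$ --- to be the main obstacle.

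Next I establish the weighted higher-order estimates: differentiating $(\ref{f1})_{1}$, $(\ref{f1})_{3}$ in $x$ and pairing with $\sg^{-\nu}\h_{x}$, $\sg^{-\nu}\b{\h}_{x}$, while pairing $(\ref{f1})_{2}$, $(\ref{f1})_{4}$ with the appropriate $\sg^{-\nu}$-weighted multiples, exactly as in Lemmas~\ref{lem e1}--\ref{lem e1'}; the extra terms generated by $\p_{x}\sg^{-\nu}$ and by $\p_{x}^{k}\sg$ are harmless because $|\p_{x}^{k}\sg|\leq C\f{\delta^{k+1}}{(1+\delta x)^{k+1}}$ by $(\ref{sig})$ and are dominated by the dissipation already in hand. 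Collecting everything, one obtains, for $\nu\in(0,\l]$ and $\xi\geq0$, a weighted analogue of $(\ref{e1-1})$:
\be
\bs
&(1+\delta t)^{\xi}\|\sg^{-\f{\nu}{2}}(\h,\s,\b{\h},\b{\s})\|^{2}_{1}+\int_{0}^{t}(1+\delta\tau)^{\xi}\|\sg^{-\f{\nu-2}{2}}(\h,\s,\b{\h},\b{\s})\|^{2}d\tau\\
&+\int_{0}^{t}(1+\delta\tau)^{\xi}\|\sg^{-\f{\nu}{2}}(\h_{x},\s_{x},\s_{xx},\b{\h}_{x},\b{\s}_{x},\b{\s}_{xx},\b{\s}-\s)\|^{2}d\tau\\
\leq&C\|\sg^{-\f{\l}{2}}(\h_{0},\s_{0},\h_{0x},\s_{0x},\b{\h}_{0},\b{\s}_{0},\b{\h}_{0x},\b{\s}_{0x})\|^{2}\\
&+C\nu\int_{0}^{t}(1+\delta\tau)^{\xi}\|\sg^{-\f{\nu-2}{2}}(\s_{x},\b{\s}_{x},\b{\s}-\s)\|^{2}d\tau+C\xi\int_{0}^{t}(1+\delta\tau)^{\xi-1}\|\sg^{-\f{\nu}{2}}(\h,\s,\b{\h},\b{\s})\|^{2}_{1}d\tau.
\es
\ee
Finally, starting from the unweighted estimate $(\ref{e})$ of Proposition~\ref{prop time decay} (i.e. the case $\nu=\l$, $\xi=0$) and running the same induction on $k=0,1,\dots,[\l]$ as in the proof of Proposition~\ref{prop M>1 time decay} --- alternately taking $(\xi,\nu)=(k+1,0)$ and $(\xi,\nu)=(k+1,\l-k-1)$, and treating the residual $(1+\delta\tau)^{\xi-1}$ term by the interpolation argument leading to $(\ref{e1-5})$ when $\l\notin\mb{Z}$ --- one arrives at the asserted inequality with the harmless loss $(1+\delta t)^{\beta}$, $\beta>0$. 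This completes the proof of Proposition~\ref{prop M_{+}=1 time decay }.
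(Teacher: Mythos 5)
Your overall scaffolding coincides with the paper's: weight $\sg^{-\nu}$ on $(\ref{f_0})$, time weight $(1+\delta\tau)^{\xi}$, the decomposition $\s=\b{\s}+(\s-\b{\s})$, the linear transformation $(\h,\b{\h},\b{\s})^{\rm T}=\boldsymbol{P}(\hat{\r},\hat{n},\hat{v})^{\rm T}$ diagonalizing $\boldsymbol{M_{4}}$, weighted analogues of Lemmas \ref{lem e1}--\ref{lem e1'}, and the final induction. But the proof is incomplete exactly where you say you expect the main obstacle, and the mechanism you propose there is not the one that works. The threshold $\l<\l^{*}$ does not come from a Young inequality splitting $\sg^{-(\nu-1)}$ between $\sg^{-\nu}$ and $\sg^{-(\nu-2)}$ together with cancellations against $R_{2}$ (in the paper $R_{2}$ is merely a small perturbation, absorbed with $\delta^{\f12}$-factors in $(\ref{R e})$; your bound $O(\|\sg^{-\f{\nu-2}{2}}(\h,\s,\b{\h},\b{\s})\|^{2})$ with an $O(1)$ constant would not even be absorbable). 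Moreover, your claimed acoustic dissipation $c\nu\|\sg^{-\f{\nu-2}{2}}\hat{v}\|^{2}$ from $G_{1}$ ignores that the flux term $G_{2}$ produces, after differentiating the weight, a \emph{negative} contribution $-a^{2}\f{\mu+n_{+}}{2}\nu(\nu-1)\|\sg^{-\f{\nu-2}{2}}\hat{v}\|^{2}$ (see $(\ref{forth term})$); it is precisely the competition between this term, the positive $\hat{v}^{2}$-contributions of $G_{1}$ and of $R_{1}$ (coefficient $a\f{A_{1}\gm(\gm+1)\r_{+}^{\gm}+A_{2}\a(\a+1)n_{+}^{\a}}{2|u_{+}|^{2}}[1+\nu-\f{\nu(\nu-1)}{2(1+b^{2})}]$), and the cross term $a\nu\int\sg^{-(\nu-1)}\f{\r_{+}(u_{+}^{2}-p_{1}'(\r_{+}))}{|u_{+}|}\hat{v}(\s-\b{\s})dx$ against the drag dissipation $n_{+}\|\sg^{-\f{\nu}{2}}(\s-\b{\s})\|^{2}$ that restricts $\nu$.

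Two ingredients are missing from your argument and cannot be dispensed with. First, for $\nu\geq 3$ the bracket $1+\nu-\f{\nu(\nu-1)}{2(1+b^{2})}$ may be nonpositive, so the paper converts part of the viscous dissipation $\int\sg^{-\nu}(\mu\s_{x}^{2}+n\b{\s}_{x}^{2})dx$ into additional $\hat{v}$-control via the Hardy-type inequality of Lemma \ref{hardy lem} with $\zeta=\sg^{-(\nu-1)}$, gaining the extra term $\f{(\nu-1)^{2}}{4(1+b^{2})}$ in $(\ref{six term})$; the sharp constraint then appears as the positive definiteness (for some $k\in(0,1)$) of the $2\times2$ matrix $\boldsymbol{M_{6}}$ pairing $\sg^{-\f{\nu}{2}}(\s-\b{\s})$ with $\sg^{-\f{\nu-2}{2}}\hat{v}$, whose determinant condition is a quadratic inequality in $\nu$ with positive root exactly $\l^{*}=2+\sqrt{8+\f{1}{1+b^{2}}}$. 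Without the Hardy step your scheme closes only for $\nu\in(0,3]$ and cannot reach the stated range. Second, the cubic terms with the degenerate weight (and the weighted boundary contributions $\sg(0)\h^{2}(t,0)$, $\b{\h}^{2}(t,0)$) require the inequality of Lemma \ref{lem non f} (from \cite{Yin}); the unweighted Sobolev bound $(\ref{infty e2})$ alone does not control $\int\sg^{-(\nu-1)}|\h|^{3}dx$. With these two tools supplied, and with the expansion facts listed after $(\ref{G1-3})$ used to extract the $\sg^{-(\nu-2)}\hat{v}^{2}$ terms, your outline does match the paper's proof; as written, however, the decisive coercivity step is not established.
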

By the fact $\l\geq 1$ and  $(\ref{M_{+}=1 prior a})$, it is easy to verify the following estimate:
\begin{equation}
\| \sg^{-\f{1}{2}}(\h,\s,\b{\h},\b{\s})  \|_{L^{\infty}}  \leq \| \sg^{-\f{\l}{2}}(\h,\s,\b{\h},\b{\s}) \|_{1}\leq  \sqrt{2}\v.
\label{infty e2}
\end{equation} 

To deal with some nonlinear terms,  we use the following inequality as in \cite{NN,KNNZ out,Yin}. 
\begin{lem}[\cite{Yin} ]
	\label{lem non f}
	Let $\nu\geq1 $. Then a function $\sg^{-\f{\nu}{2}}(x)\h(t,x)\in H^{1}(\mb{R}_{+})$ satisfies 
	\be 
	\int \sg^{-\f{\nu-1}{2}}|\h|^{3}dx\leq C\| \sg^{-\f{1}{2}}\h \| ~(\sg(0)\h^{2}(t,0)+\| \sg^{-\f{\nu}{2}}\h_{x} \|^{2}+\| \sg^{-\f{\nu-2}{2}} \h \|^{2}),
	\label{nonlinear f}
	\ee  
	where the function $\sg(x)\geq 0$ is defined by $(\ref{sg0})$ with $\sg(0)$ small enough.
\end{lem}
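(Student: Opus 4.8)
The plan is to prove $(\ref{nonlinear f})$ as a weighted Gagliardo--Nirenberg inequality on $\mb{R}_{+}$, obtained by pairing a Cauchy--Schwarz splitting of the cubic term with a one-dimensional Sobolev embedding in which every weight is moved around by nonnegative powers of $\sg$. First I would write $\sg^{-\f{\nu-1}{2}}|\h|^{3}=\big(\sg^{-\f12}|\h|\big)\big(\sg^{-\f{\nu-2}{2}}\h^{2}\big)$ and apply Cauchy--Schwarz, which gives
\be
\int\sg^{-\f{\nu-1}{2}}|\h|^{3}\,dx\leq \|\sg^{-\f12}\h\|\;\Big(\int\sg^{-(\nu-2)}\h^{4}\,dx\Big)^{\f12} . \nonumber
\ee
(Here $\sg^{-(\nu-2)}=\sg^{2}\sg^{-\nu}\leq C\sg^{-\nu}$ since $\sg$ is bounded and small by $(\ref{sig})$, so all integrals in sight are finite whenever $\sg^{-\f\nu2}\h\in H^{1}$.) This already isolates the factor $\|\sg^{-\f12}\h\|$ on the right of $(\ref{nonlinear f})$, so it remains only to control $\int\sg^{-(\nu-2)}\h^{4}\,dx$ by the square of the bracket on the right of $(\ref{nonlinear f})$.

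For that I would bound $\int\sg^{-(\nu-2)}\h^{4}\,dx\leq \|\h\|_{L^{\infty}}^{2}\,\|\sg^{-\f{\nu-2}{2}}\h\|^{2}$ and estimate $\|\h\|_{L^{\infty}}$ by the fundamental theorem of calculus. Since $\sg^{-\f\nu2}\h\in H^{1}(\mb{R}_{+})$ and $\sg^{\nu/2}$ is bounded, $\h(t,x)\to0$ as $x\to+\infty$, hence $\h^{2}(t,x)=-2\int_{x}^{\infty}\h\h_{y}\,dy$ and $\|\h\|_{L^{\infty}}^{2}\leq 2\int_{0}^{\infty}|\h||\h_{x}|\,dx$. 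Splitting $|\h||\h_{x}|=(\sg^{\f\nu2}|\h|)(\sg^{-\f\nu2}|\h_{x}|)$ and using $\sg^{\nu}=\sg^{2\nu-2}\sg^{2-\nu}\leq C\sg^{2-\nu}$ for $\nu\geq1$, Cauchy--Schwarz and Young's inequality give $\|\h\|_{L^{\infty}}^{2}\leq C\big(\|\sg^{-\f\nu2}\h_{x}\|^{2}+\|\sg^{-\f{\nu-2}{2}}\h\|^{2}\big)$, which is dominated by the bracket in $(\ref{nonlinear f})$; the boundary term $\sg(0)\h^{2}(t,0)$ is then retained merely as a harmless nonnegative addition. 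Substituting these two estimates back into the display above yields $(\ref{nonlinear f})$. One could equivalently first establish the pointwise bound $\|\sg^{-\f{\nu-1}{2}}\h^{2}\|_{L^{\infty}}\leq C\big(\|\sg^{-\f\nu2}\h_{x}\|^{2}+\|\sg^{-\f{\nu-2}{2}}\h\|^{2}\big)$ by the same device --- writing the left quantity as $-\int_{x}^{\infty}(\cdot)_{y}\,dy$ and using $|\sg_{x}|\leq C\sg^{2}$ from $(\ref{sg0})$ for the terms where the derivative falls on the weight --- and then interpolate against $\|\sg^{-\f12}\h\|$; this is the mechanism behind the analogous one-phase estimates in \cite{NN,KNNZ out}.

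There is no genuine obstacle here; the one point requiring care is the bookkeeping of the $\sg$-exponents so that every step is uniform in $x$. Throughout one uses $0<\sg\leq C\delta$ small from $(\ref{sig})$, so that $\sg$ raised to any nonnegative power is bounded, together with $|\sg_{x}|\leq C\sg^{2}$ from $(\ref{sg0})$ whenever a derivative lands on a weight. The hypothesis $\nu\geq1$ is precisely what guarantees that each weight comparison used above (such as $\sg^{\nu}\leq C\sg^{2-\nu}$, $\sg^{\f{\nu-1}{2}}\leq C$, or $\sg^{-(\nu-2)}\leq C\sg^{-\nu}$) leaves a nonnegative residual power of $\sg$, which is the structural reason the argument closes without any upper restriction on $\nu$.
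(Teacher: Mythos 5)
The paper offers no proof of this lemma at all --- it is quoted verbatim from the reference [Yin], so there is nothing internal to compare against; your argument must stand on its own, and it does. The exponent bookkeeping is correct at every step: the splitting $\sg^{-\f{\nu-1}{2}}|\h|^{3}=(\sg^{-\f12}|\h|)(\sg^{-\f{\nu-2}{2}}\h^{2})$, the bound $\int\sg^{-(\nu-2)}\h^{4}dx\le\|\h\|_{L^\infty}^{2}\|\sg^{-\f{\nu-2}{2}}\h\|^{2}$, and the sup-norm estimate obtained by integrating $(\h^{2})_{y}$ from $x$ to $+\infty$ (legitimate because $\sg^{-\f{\nu}{2}}\h\in H^{1}$ together with $|\sg_{x}|\le C\sg^{2}$ gives $\sg^{-\f{\nu}{2}}\h_{x}\in L^{2}$, $\sg^{\f{\nu}{2}}|\h|\le C\sg^{-\f{\nu-2}{2}}|\h|\in L^{2}$ for $\nu\ge1$, and $\h\to0$ at infinity). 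Interestingly, your route proves a slightly stronger inequality than the one stated: because you estimate the \emph{unweighted} $\|\h\|_{L^\infty}$ by integrating from infinity, no boundary contribution at $x=0$ arises, and the term $\sg(0)\h^{2}(t,0)$ is indeed only a harmless addition; in the original argument of [Yin] (and the analogous estimates in \cite{NN,KNNZ out,NNU}) the boundary term enters precisely because a \emph{weighted} sup such as $\sup_x\sg^{-\f{\nu-1}{2}}\h^{2}$ is controlled by integrating from the boundary, where $\h$ does not vanish. Your closing remark about the alternative weighted-sup route is plausible but the interpolation against $\|\sg^{-\f12}\h\|$ is left vague there (a naive pairing produces $\int|\h|dx$, which is not controlled); since your main argument does not use it, this does not affect the validity of the proof.
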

To gain faster decay rates, it is necessary to use the following Hardy type inequality. 
\begin{lem}[\cite{KK} ]
	\label{hardy lem}
	Let $\zeta \in C^{1}[0,\infty)$ satisfies $\zeta>0$, $\zeta_{x}>0$ and $\zeta(x) \rightarrow \infty$ for $x\rightarrow \infty$. Then we have 
	\begin{equation}
	\int_{\mathbb{R}_{+}} \psi^{2} \zeta_{x} dx \leq 4 \int_{\mathbb{R}_{+}} \psi^{2}_{x} \frac{\zeta^{2}}{\zeta_{x}}dx
	\label{hardy ineq}
	\end{equation}	
	for $\s$ satisfying $\s(t,0)=0$ and $\sqrt{w} \psi \in H^{1 }(\mb{R}_{+})$, with the function $w:=\frac{\zeta^{2}}{\zeta_{x}}$. 
\end{lem}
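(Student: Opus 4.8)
The plan is to establish \eqref{hardy ineq} through a single integration by parts followed by the Cauchy--Schwarz inequality, which is the classical route that also produces the sharp constant $4$. Since $\zeta_{x}$ is the derivative of $\zeta$, I would first work on a truncated interval $[0,R]$ and write
\[
\int_{0}^{R}\psi^{2}\zeta_{x}\,dx=\bigl[\psi^{2}\zeta\bigr]_{0}^{R}-2\int_{0}^{R}\psi\psi_{x}\zeta\,dx .
\]
The boundary contribution at $x=0$ drops out immediately thanks to the Dirichlet condition $\psi(t,0)=0$, so that after this first step only the far-field term $\psi^{2}(R)\zeta(R)$ remains to be controlled.

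Disposing of that far-field term is the \emph{main obstacle}, and controlling it is exactly what the hypothesis $\sqrt{w}\,\psi\in H^{1}(\mb{R}_{+})$ is for. Writing $h:=\sqrt{w}\,\psi=\zeta\zeta_{x}^{-1/2}\psi$, membership in $H^{1}(\mb{R}_{+})$ forces $h(x)\to0$ as $x\to\infty$, and since $\psi^{2}\zeta=h^{2}\,\zeta_{x}/\zeta$ it suffices to control the factor $\zeta_{x}/\zeta=(\log\zeta)_{x}$. In the weighted regime in which this lemma is applied the weight behaves like $w\sim(1+\delta x)^{\nu}$ and the natural choice is $\zeta\sim(1+\delta x)^{\nu-1}$, for which $\zeta_{x}/\zeta\to0$; hence $\psi^{2}(R)\zeta(R)=h^{2}(R)\,\zeta_{x}(R)/\zeta(R)\to0$ outright. (More generally, if $\psi^{2}\zeta$ did not tend to zero along a sequence, it would stay bounded below by some $c>0$ for large $x$, forcing $\int_{x_{0}}^{R}\psi^{2}\zeta_{x}\,dx\ge c\int_{x_{0}}^{R}\zeta_{x}/\zeta\,dx=c\log\zeta(R)+O(1)\to\infty$ because $\zeta\to\infty$, which is incompatible with a finite right-hand side.) Letting $R\to\infty$ therefore removes the boundary term and leaves $\int_{0}^{\infty}\psi^{2}\zeta_{x}\,dx=-2\int_{0}^{\infty}\psi\psi_{x}\zeta\,dx$.

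Finally I would close the estimate by Cauchy--Schwarz with the grouping that matches the two sides of \eqref{hardy ineq}: factoring $\psi\psi_{x}\zeta=(\psi\sqrt{\zeta_{x}})\bigl(\psi_{x}\zeta\,\zeta_{x}^{-1/2}\bigr)$ gives
\[
\int_{0}^{\infty}\psi^{2}\zeta_{x}\,dx=-2\int_{0}^{\infty}\psi\psi_{x}\zeta\,dx\le 2\Bigl(\int_{0}^{\infty}\psi^{2}\zeta_{x}\,dx\Bigr)^{1/2}\Bigl(\int_{0}^{\infty}\psi_{x}^{2}\tfrac{\zeta^{2}}{\zeta_{x}}\,dx\Bigr)^{1/2}.
\]
Denoting the left-hand integral by $A$ and the last integral by $B$, this reads $A\le2\sqrt{A}\sqrt{B}$; dividing by $\sqrt{A}$ (the case $A=0$ being trivial) yields $\sqrt{A}\le2\sqrt{B}$, that is $A\le4B$, which is precisely \eqref{hardy ineq}. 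The remaining steps are a direct integration by parts and one application of Cauchy--Schwarz, and the optimal constant $4$ emerges automatically from the quadratic inequality $A\le2\sqrt{A}\sqrt{B}$; the only genuinely delicate point is the vanishing of the far-field boundary term handled in the second paragraph.
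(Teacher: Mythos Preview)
Your argument is the standard Hardy-type proof and is correct: integrate by parts, discard the boundary term at $x=0$ via $\psi(t,0)=0$, show the far-field contribution vanishes, and then close with Cauchy--Schwarz to obtain $A\le 2\sqrt{A}\sqrt{B}$, hence $A\le 4B$. The only soft spot is the handling of $\psi^{2}(R)\zeta(R)$: your first justification tacitly assumes $\zeta_{x}/\zeta\to 0$, which is not part of the hypotheses, so the fallback argument (if $\psi^{2}\zeta$ stayed bounded below along a sequence then $\int\psi^{2}\zeta_{x}\,dx$ would diverge by comparison with $\int\zeta_{x}/\zeta\,dx=\log\zeta\to\infty$) is what actually carries the weight in general. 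With that proviso the proof is complete.

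As for comparison with the paper: there is nothing to compare. The paper does not prove this lemma at all; it is quoted verbatim from Kawashima--Kurata \cite{KK} and used as a black box in the weighted estimates for the sonic case. Your write-up therefore supplies what the paper simply cites.
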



With the aid of Lemmas  $\ref{lem non f}$-$\ref{hardy lem}$, we obtain the weighted $L^{2}$ estimate of $(\h,\s,\b{\h},\b{\s})$.
\begin{lem}
	\label{lem L^{2} e2}
Under the same conditions in Proposition $\ref{prop M_{+}=1 time decay }$, then the solution $(\h,\s,\b{\h},\b{\s})$ to the problem $(\ref{f1})$-$(\ref{boundary d1})$ satisfies  for $t \in[0,T]$ that
\begin{equation}
\begin{aligned}
&
(1+\delta  \tau)^{\xi} \|\sg^{-\f{\nu}{2}}(\h,\s,\b{\h},\b{\s})\|^{2}	+\int^{t}_{0} (1  +\delta \tau)^{\xi}	\|\sg^{-\f{\nu-2}{2}} (\h,\s,\b{\h},\b{\s} )\|^{2}	d\tau 
\\
&
  +\int^{t}_{0}	(1+\delta \tau)^{\xi} 	 \|\sg^{-\f{\nu}{2}}(\s_{x},\b{\s}_{x},\b{\s}-\s)\|^{2}d\tau+\int^{t}_{0}	(1+\delta \tau)^{\xi} 	\f{1}{\delta^{\nu}}(\h^{2}(t,0)+\b{\h}^{2}(t,0))d\tau 
\\
\leq&
C\| \sg^{-\f{\l}{2}}(\h_{0},\s_{0},\b{\h}_{0},\b{\s}_{0})  \|^{2}  +C\delta 	\int^{t}_{0} 	(1   +\delta  \tau)^{\xi} \|\sg^{-\f{\nu}{2}} (\h_{x},\b{\h}_{x})\|^{2}	d\tau
\\
& 
+C\delta \xi	\int^{t}_{0}	(1  +\delta  \tau)^{\xi  -1}	\|\sg^{-\f{\nu}{2}} (\h,\s,\b{\h},\b{\s}) \|^{2}d\tau, ~~~~~~~~~~~~~~~~~~~~~~~~~~~~~~~~~~~~~~~~~~          
\label{L^{2} e2}
\end{aligned} 
\end{equation}
with $\xi\geq0$.
\end{lem}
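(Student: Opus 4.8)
The plan is to derive a weighted $L^2$ identity for the perturbation by multiplying the basic energy balance $(\ref{f_0})$ by the space weight $\sg^{-\nu}$ and then by the time weight $(1+\delta t)^{\xi}$, and to extract from it the dissipative quadratic form together with the boundary term. First I would multiply $(\ref{f_0})$ by $\sg^{-\nu}$ and integrate over $\mb{R}_{+}$; since $\sg$ depends only on $x$ and $(\sg^{-\nu})_{x}=\nu\sg^{-\nu-1}\sg_{x}$ with $\sg_{x}=-a\sg^{2}+O(|\sg|^{3})$, the derivative of the weight produces $-a\nu\sg^{-(\nu-1)}$ times the flux $G_{1}+G_{2}$, which is exactly the mechanism that generates the crucial dissipation $\|\sg^{-\frac{\nu-2}{2}}(\h,\s,\b\h,\b\s)\|^{2}$ in the same way $\nu W_{a,\nu-1}$ did in Lemma \ref{lem h e1}. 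The boundary term $-(\sg^{-\nu}G_{1})(t,0)$ is handled as in $(\ref{second term e})$: with $\sg(0)\sim\delta$ it gives $c\,\delta^{-\nu}(\h^{2}(t,0)+\b\h^{2}(t,0))$ after dividing out, using the sign $u_{-}<0$ and the strict convexity of $\Phi_{1},\Phi_{2}$.

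Next I would identify the leading part of $-a\nu\int\sg^{-(\nu-1)}(G_{1}+G_{2})\,dx$. Freezing coefficients at the far-field state and decomposing $\s=\b\s+(\s-\b\s)$ (as in Li-Wang-Wang \cite{LWW} and already used for $(\ref{G1})$), the quadratic part becomes $\frac{a\nu}{2}\int\sg^{-(\nu-1)}(\h,\b\h,\b\s)\boldsymbol{M_{3}}(\h,\b\h,\b\s)^{\rm T}dx$ plus cross terms $-a\nu\int\sg^{-(\nu-1)}[\r_{+}u_{+}\b\s(\s-\b\s)+A_{1}\gm\r_{+}^{\gm-1}\h(\s-\b\s)]dx$; since $M_{+}=1$ here, $\boldsymbol{M_{3}}$ is only positive \emph{semi}definite (its determinant vanishes at the sonic point), so the argument of $(\ref{G1-1})$ must be refined — I would absorb the cross terms using $\eta\|\sg^{-\frac{\nu-1}{2}}(\h,\b\s)\|^{2}+C_{\eta}\|\sg^{-\frac{\nu-1}{2}}(\b\s-\s)\|^{2}$ and then recover the missing direction in the kernel of $\boldsymbol{M_{3}}$ from the viscous dissipation $\int\sg^{-\nu}(\mu\s_{x}^{2}+n\b\s_{x}^{2}+n(\b\s-\s)^{2})dx$ combined with the Hardy inequality of Lemma \ref{hardy lem} (applied with $\zeta=\sg^{-\frac{\nu-2}{2}}$-type weight, which controls $\|\sg^{-\frac{\nu-2}{2}}\s\|^{2}$ by $\|\sg^{-\frac{\nu}{2}}\s_{x}\|^{2}$ since $\s(t,0)=0$). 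This is why $\l<\l^{*}=2+\sqrt{8+\frac1{1+b^{2}}}$ enters: the Hardy constant $4$ versus the weight exponent forces the strict inequality, exactly as for the isentropic case in \cite{NNU}.

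Then I would estimate the remaining terms: the error terms $\int\sg^{-\nu}(R_{1}+R_{2}+R_{3})dx$ coming from the steady-state inhomogeneities, using the algebraic decay $(\ref{sigma})$-$(\ref{sig})$ which gives $|\wt u_{x}|,|\wt v_{x}|\lesssim\sg^{2}$ and $|\wt\r_{x}|,|\wt n_{x}|\lesssim\sg^{2}$, so that $\sg^{-\nu}$ times these remainders is $\lesssim\sg^{-(\nu-2)}$ and hence controlled by $\eta\|\sg^{-\frac{\nu-2}{2}}(\h,\s,\b\h,\b\s)\|^{2}$ plus the boundary term with a $\delta$ prefactor via Lemma \ref{lem d2}-type inequalities; the genuinely cubic terms are handled by Lemma \ref{lem non f} together with the smallness $(\ref{M_{+}=1 prior a})$, which converts $\int\sg^{-(\nu-1)}|\h|^{3}$ into $\v$ times $\sg(0)\h^{2}(t,0)+\|\sg^{-\frac{\nu}{2}}\h_{x}\|^{2}+\|\sg^{-\frac{\nu-2}{2}}\h\|^{2}$, all of which are already on the left-hand side and can be absorbed. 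Finally I would multiply the resulting differential inequality by $(1+\delta t)^{\xi}$, integrate in $\tau$ over $[0,t]$ — the time-derivative of the time weight producing the last term $C\delta\xi\int(1+\delta\tau)^{\xi-1}\|\sg^{-\frac{\nu}{2}}(\h,\s,\b\h,\b\s)\|^{2}d\tau$ — and note that $\sg^{-\nu}\le\sg^{-\l}\cdot C$ is not uniformly true, so one uses $\nu\le\l$ and $\sg(x)\le C\delta$ to bound the initial integral by $C\|\sg^{-\frac{\l}{2}}(\h_{0},\s_{0},\b\h_{0},\b\s_{0})\|^{2}$, which yields $(\ref{L^{2} e2})$. \textbf{Main obstacle.} The delicate point I expect to fight hardest with is the degeneracy of $\boldsymbol{M_{3}}$ at $M_{+}=1$: recovering coercivity in the kernel direction forces a careful coupling of the weighted flux dissipation, the viscous dissipation, and the friction dissipation through the Hardy inequality, and it is precisely this coupling that produces the sharp threshold $\l^{*}=2+\sqrt{8+\frac1{1+b^{2}}}$ rather than a cruder bound.
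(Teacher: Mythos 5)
Your skeleton (multiplying $(\ref{f_0})$ by $\sg^{-\nu}$, extracting the boundary term as in $(\ref{second term e})$, decomposing $\s=\b{\s}+(\s-\b{\s})$, using Lemma \ref{lem non f} for cubic terms, and finally multiplying by the time weight) coincides with the paper, but the central coercivity step at $M_{+}=1$ is not carried out correctly, and the mechanism you propose would not reach the stated range of $\nu$. In the paper the kernel direction of the degenerate quadratic form is handled by the linear transformation $(\h,\b{\h},\b{\s})^{\rm T}=\boldsymbol{P}(\hat{\r},\hat{n},\hat{v})^{\rm T}$, and the dissipation $\|\sg^{-\f{\nu-2}{2}}\hat{v}\|^{2}$ is assembled from three explicit positive sources: the next-order $O(\sg)$ expansion of the coefficients in $-a\nu\int\sg^{-(\nu-1)}G_{1}dx$ (contributing the factor $\nu$), the sign-definite term $R_{1}\approx a\sg^{2}\,(\text{positive quadratic})$ kept on the left (contributing the factor $1$), and, only for $\nu\geq 3$, the Hardy inequality with $\zeta=\sg^{-(\nu-1)}$ applied to the viscous terms (contributing $\f{(\nu-1)^{2}}{4(1+b^{2})}$); from this one must subtract the negative contribution $-a^{2}\f{\mu+n_{+}}{2}\nu(\nu-1)\|\sg^{-\f{\nu-2}{2}}\b{\s}\|^{2}$ generated by $-a\nu\int\sg^{-(\nu-1)}G_{2}dx$, a term your sketch never mentions. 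You instead discard $R_{1}$ as an error to be absorbed with a small constant and rely on Hardy alone for the kernel direction; this loses two of the three positive contributions, and moreover Hardy with an increasing weight $\sg^{-(\nu-1)}$ is not even available for $\nu\leq 1$ (the paper deliberately avoids it on $\nu\in(0,3]$), so your single mechanism cannot cover the whole range $\nu\in(0,\l]$.

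The second, more serious misidentification concerns the origin of the threshold $\l^{*}=2+\sqrt{8+\f{1}{1+b^{2}}}$. It is not ``the Hardy constant $4$ versus the weight exponent, as in \cite{NNU}'': when $p_{1}'(\r_{+})\neq p_{2}'(n_{+})$ the expansion of $G_{1}$ produces the drag-force cross term $a\nu\int\sg^{-(\nu-1)}\f{\r_{+}(u_{+}^{2}-p_{1}'(\r_{+}))}{|u_{+}|}\hat{v}(\s-\b{\s})dx$, whose coefficient carries $b$, and the lemma holds precisely because this cross term is dominated by the quadratic form coupling the friction dissipation $n_{+}\|\sg^{-\f{\nu}{2}}(\s-\b{\s})\|^{2}$ with the assembled $\hat{v}$-dissipation (positive definiteness of the matrices $\boldsymbol{M_{5}}$ and $\boldsymbol{M_{6}}$); working out this positivity is exactly what yields $\nu^{2}-4\nu-(4+\f{1}{1+b^{2}})<0$. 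Your plan of absorbing cross terms by an $\eta$/$C_{\eta}$ Young inequality cannot produce this: near $\nu=\l^{*}$ the constants involved are of order one and the positivity is borderline, so the crude absorption fails, and without the $R_{1}$ and $G_{1}$-expansion contributions even the qualitative coercivity in the kernel direction is not established. In short, the proposal reproduces the outer structure of the paper's proof but is missing the quantitative quadratic-form computation that is the actual content of this lemma.
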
	
\begin{proof}
We multiply $(\ref{f_0})$ by the space weight function $\sg^{-\nu}$, where the space weight function $\sg\geq 0$  satisfies  $(\ref{sig})$ and $(\ref{sg0})$. Then, we integrate the resulted equation over $\mb{R}_{+}$ to get
	\be 
\begin{aligned}
	&
	 \f{d}{dt}\int \sg^{-\nu}(\mc{E}_{1}+\mc{E}_{2})dx-(\sg^{-\nu}G_{1})(t,0)-a\nu \int \sg^{-(\nu-1)}G_{1}dx-a\nu \int \sg^{-(\nu-1)}G_{2}dx
	\\
	&
	+\int \sg^{-\nu }n(\b{\s}-\s)^{2}dx +\int \sg^{-\nu}(\mu \s_{x}^{2}+n\b{\s}^{2}_{x})dx+\int \sg^{-\nu}R_{1} dx
	\\
=&
-\int\sg^{-\nu} R_{2}dx-\int \sg^{-\nu}R_{3}dx,
\end{aligned}
\label{f_02}
\ee 
where  $\mc{E}_{i}$, $i=1,2$ are defined by  $(\ref{mcE 1})$-$(\ref{mcE 2})$, and $G_{j}$ for  $j=1,2$, $R_{k}$ for $k=1,2,3$  are defined by $(\ref{G})$.
First, we estimate  terms  on the left  hand side of $(\ref{f_02})$.
Under the condition $(\ref{boundary d1})$,  the second term on the left hand side  is estimated as 
\begin{equation}
\begin{split}
&
~~~- (\sg^{-\nu}
G_{1})(t,0)
 \geq \f{c}{\delta^{\nu}} (\h^{2}(t,0)+\b{\h}^{2}(t,0))\geq0.~~~~~~~
\label{second term e2}
\end{split}
\end{equation}
%
%
For the third term on the left hand side, using $(\ref{nonlinear f})$
and $\s=\b{\s}+(\s-\b{\s})$ yields
\begin{equation}
\begin{aligned}
&
-a\nu \int \sg^{-(\nu-1)}G_{1}dx
\\
\geq&
 a\nu \int \sg^{-(\nu-1)}[\f{1}{2} (\h,\b{\h},\b{\s})\boldsymbol{M_{4}}(\h,\b{\h},\b{\s})^{\rm T}
 -\r_{+}u_{+}\b{\s}(\s-\b{\s})-A_{1}\gm\r^{\gm-1}_{+}\h(\s-\b{\s})]dx
 \\
 &
 + a\nu \int\sg^{-(\nu-1)}
[-(A_{1}\gm\wt{\r}^{\gm-2}\wt{u}-A_{1}\gm\wt{\r}_{+}^{\gm-2}u_{+})\f{\h^{2}}{2}
-(A_{2}\a \wt{n}^{\a-2}\wt{v}-A_{2}\a n_{+}^{\a-2}u_{+})\f{\b{\h}^{2}}{2}
\\
&
-(A_{1}\gm\wt{\r}^{\gm-1}-A_{1}\gm\r_{+}^{\gm-1})\h\s
-(A_{2}\a\wt{n}^{\a-1}-A_{2}\a n_{+}^{\a-1})\b{\h}\b{\s}]dx-C\f{\v}{\delta^{\nu}}(\h^{2}(t,0)+\b{\h}^{2}(t,0))
\\
&
-C\v(\|\sg^{-(\f{\nu-2}{2})}(\h,\s,\b{\h},\b{\s})\|^{2}+\|\sg^{-\f{\nu}{2}}(\h_{x},\s_{x},\b{\h}_{x},\b{\s}_{x})\|^{2}),
\label{bound}
\end{aligned}
\end{equation}
where the symmetric matrix  $\boldsymbol{M_{4}}$ is defined as 
\begin{equation}
\begin{split}
&\boldsymbol{M_{4}}=
\begin{pmatrix}
-A_{1}\gm \r_{+}^{\gm-2}u_{+}&0&-A_{1}\gm\r_{+}^{\gm-1}\\  
0& -A_{2}\a n_{+}^{\a-2}u_{+}&-A_{2}\a n_{+}^{\a-1}\\
-A_{1}\gm\r_{+}^{\gm-1}&-A_{2}\a n_{+}^{\a-1}& -(\r_{+}+n_{+})u_{+} 
\end{pmatrix}.
\end{split}
\end{equation}
Owing to $M_{+}=1$, it is easy to check that  three eigenvalues of the  matrix $\boldsymbol{M_{4}}$ satisfy: $\hat{\l}_{1}>0$, $\hat{\l}_{2}>0$, $\hat{\l}_{3}=0$. Take the coordinate transformation
\begin{equation}
\begin{split}
\begin{pmatrix}
\h\\ \b{\h}\\ \b{\s} 
\end{pmatrix}
=\boldsymbol{P} \begin{pmatrix}
\hat{\r}\\ \hat{n}\\ \hat{v} 
\end{pmatrix},
\label{bar rho u}
\end{split}
\end{equation}
where the  matrix  $\boldsymbol{P}$ is denoted by
 \begin{equation}
 \begin{aligned}
 \boldsymbol{P} =
 \begin{pmatrix}
 r_{11}& r_{21}&-\f{\r_{+}}{u_{+}} \\ 
 r_{12}& r_{22}&-\f{n_{+}}{u_{+}}\\ 
 r_{13}&r_{23}&1
 \end{pmatrix} {\rm with~constants~} r_{ij} ~{\rm for}~1\leq i\leq 2, ~1\leq j\leq 3, 
 \label{P}
 \end{aligned}
 \end{equation}
 such that 
 \begin{equation}
 \begin{split}
 (\h,\b{\h},\b{\s})~\boldsymbol{M_{4}}~(\h,\b{\h},\b{\s})^{\rm T}
 =(\hat{\r},\hat{n},\hat{v}) \begin{pmatrix} \hat{\l}_{1}&0&0\\ 
 0&\hat{\l}_{2}&0\\
 0&0&0
 \end{pmatrix}  (\hat{\r},\hat{n},\hat{v})^{\rm T}
 =\hat{\l}_{1}\hat{\r}^{2}+\hat{\l}_{2}\hat{n}^{2},
 \end{split}
 \label{P 1}
 \end{equation}
 By   
$(\ref{sigma 1})$, 
$(\ref{infty e2})$,  $(\ref{bound})$, and $(\ref{bar rho u})$-$(\ref{P 1})$, the third term is estimated as
\begin{equation}
\begin{aligned}
&
-a\nu \int \sg^{-(\nu-1)}G_{1}dx
\\
\geq&
 a \nu\int \sg^{-(\nu-1)} (\f{\hat{\l}_{1}}{2}\hat{\r}^{2}+\f{\hat{\l}_{2}}{2}\hat{n}^{2})dx+a \nu\int \sg^{-(\nu-2)}\f{A_{1}\gm(\gm+1)\r_{+}^{\gm}+A_{2}\a(\a+1)n_{+}^{\a}}{2|u_{+}|^{2}}\hat{v}^{2}dx
 \\
 & 
+a \nu\int \sg^{-(\nu-1)}\f{\r_{+}(u_{+}^{2}-A_{1}\gm\r_{+}^{\gm-1})}{|u_{+}|}\hat{v}(\s-\b{\s})dx-C\delta^{\f{1}{2}}\|\sg^{-\f{\nu-1}{2}} (\hat{\r},\hat{n}) \|^{2}-C\delta^{\f{1}{2}}\|\sg^{-\f{\nu-2}{2}} \hat{v} \|^{2}
\\
&
-C\delta^{\f{1}{2}}\| \sg^{-\f{\nu}{2}}(\s-\b{\s}) \|^{2} -C(\v+\delta) \| \sg^{-\f{\nu-2}{2}}(\h,\s,\b{\h},\b{\s}) \|^{2}
-C\v \| \sg^{-\f{\nu}{2}}(\h_{x},\s_{x},\b{\h}_{x},\b{\s}_{x}) \|^{2}
\\
&
-C\v\f{1}{\delta^{\nu}}(\h^{2}(t,0)+\b{\h}^{2}(t,0)),
\end{aligned}
\label{G1-3}
\end{equation}
where we have used the following facts
\be 
\begin{aligned} 
&
-(A_{1}\gm\wt{\r}^{\gm-1}-A_{1}\gm\r_{+}^{\gm-1})\geq \f{A_{1}\gm(\gm-1)\r_{+}^{\gm-1}}{|u_{+}|}\sg-C\sg^{2},
\\
&
-(A_{2}\a \wt{n}^{\a-1}-A_{2}\a n_{+}^{\a-1})\geq \f{A_{2}\a(\a-1)n_{+}^{\a-1}}{|u_{+}|}\sg-C\sg^{2},
\\
&
-(A_{1}\gm \wt{\r}^{\gm-2}\wt{u}-A_{1}\gm\r_{+}^{\gm-2}u_{+})\geq A_{1}\gm(3-\gm)\r_{+}^{\gm-2}\sg-C\sg^{2},
\\
&
-(A_{2}\a\wt{n}^{\a-2}\wt{v}-A_{2}\a n_{+}^{\a-2}u_{+})\geq A_{2}\a(3-\a)n_{+}^{\a-2}\sg-C\sg^{2}.
\end{aligned}
\nonumber
\ee 
 With the help of  $(\ref{sigma})$-$(\ref{sig})$, $(\ref{boundary d1})$, $(\ref{M_{+}=1 prior a})$,  $(\ref{infty e2})$-$(\ref{nonlinear f})$, $(\ref{bar rho u})$-$(\ref{P 1})$,
 and  $\s=\b{\s}+(\s-\b{\s})$, 
 it holds  that
\begin{equation}
\begin{aligned}
&
 -a\nu \int \sg^{-(\nu-1)}G_{2}(t,x)dx
 \\
\geq&
 -a^{2}\f{\mu+n_{+}}{2}\nu(\nu-1)  \|\sg^{-\f{\nu-2}{2}}\b{\s} \|^{2}-C(\delta+\v) \| \sg^{-\f{\nu-2}{2}}\b{\s} \|^{2}-C\v\|\sg^{-\f{\nu}{2}} \b{\s}_{x} \|^{2}-C\delta\|\sg^{-\f{\nu}{2}} (\b{\s}-\s) \|^{2}
 \\
\geq&
 -a^{2}\f{\mu+n_{+}}{2}\nu(\nu-1)  \|\sg^{-\f{\nu-2}{2}}\hat{v}\|^{2}-C\delta^{\f{1}{2}}(\| \sg^{-\f{\nu-1}{2}} (\hat{\r},\hat{n}) \|^{2}+\|\sg^{-\f{\nu-2}{2}}\hat{v} \|^{2})-C(\delta+\v) \| \sg^{-\f{\nu-2}{2}}\b{\s} \|^{2}
 \\
 &
 -C\v\| \sg^{-\f{\nu}{2}}\b{\s}_{x} \|^{2}
-C\delta\|\sg^{-\f{\nu}{2}} (\b{\s}-\s) \|^{2},
\label{forth term}
\end{aligned}
\end{equation}
\be 
\int \sg^{-\nu}n(\b{\s}-\s)^{2}dx\geq [n_{+}-C(\delta+\v)]\|\sg^{-\f{\nu}{2}}( \b{\s}-\s) \|^{2},~~~~~~~~~~~~~~~~~~~~~~~~~~~~~~~~~~~~~~~~~~~~~~~~~~~
\ee 
\begin{equation} 
\begin{aligned} 
&
 \int 	\sg^{-\nu} R_{1} dx
 \\
\geq&
 a\int\sg^{-(\nu-2)}[\f{A_{1}\gm (\gm-1)\r_{+}^{\gm-2}}{2}\h^{2}+\r_{+}\s^{2}+\f{A_{2}\a(\a-1)n_{+}^{\a-2}}{2}\b{\h}^{2}+n_{+}\b{\s}^{2}]dx
 \\
 &
 -C(\delta+\v)\|\sg^{-\f{\nu-2}{2}}(\h,\s,\b{\h},\b{\s})  \|^{2}\quad \quad
\\
\geq&
 a \f{A_{1}\gm(\gm+1)\r_{+}^{\gm}+A_{2}
\a(\a+1)n_{+}^{\a}}{2|u_{+}|^{2}}\| \sg^{-\f{\nu-2}{2}}\hat{v} \|^{2}-C\delta(\|\sg^{-\f{\nu-1}{2}} (\hat{\r},\hat{n}) \|^{2}+\| \sg^{-\f{\nu-2}{2}}\hat{v} \|^{2})
\\
& 
 -C(\delta+\v)\|\sg^{-\f{\nu-2}{2}}(\h,\s,\b{\h},\b{\s})  \|^{2}
.
\label{fifth term}
\end{aligned}
\end{equation}
For $\nu \in (0, 3]$,  with the help of $(\ref{sigma 1})$ and $(\ref{a})$,
 we add $(\ref{G1-3})$-$(\ref{fifth term})$  together   to have
\begin{equation}
\begin{aligned}
&
-a\nu \int \sg^{-(\nu-1)}G_{1}	dx-a\nu \int \sg^{-(\nu-1)}G_{2}dx+\int\sg^{-\nu}
n (\b{\s}-\s)^{2}dx+\int \sg^{-\nu} R_{1}dx
 \\
 \geq&
 c\| \sg^{-\f{\nu-1}{2}} (\hat{\r},\hat{n}) \|^{2}+\f{1}{4}\{a\f{A_{1}\gm(\gm+1)\r_{+}^{\gm}+A_{2}\a(\a+1)n_{+}^{\a}}{2|u_{+}|^{2}}[1+\nu-\f{\nu(\nu-1)}{2(1+b^{2})}]\|\sg^{-\f{\nu-2}{2}} \hat{v} \|^{2}
 \\
 &
+n_{+}\| \sg^{-\f{\nu}{2}}(\s-\b{\s}) \|^{2}\}+\int\sg^{-\nu}(\s-\b{\s},\hat{v})\boldsymbol{M_{5}}(\s-\b{\s},\hat{v})^{\rm T}dx-C\delta^{\f{1}{2}}\|\sg^{-\f{\nu-1}{2}} (\hat{\r},\hat{n})  \|^{2}
\\
&
 -C\delta^{\f{1}{2}}\| \sg^{-\f{\nu-2}{2}}\hat{v} \|^{2}-C(\v+\delta^{\f{1}{2}})\|  \sg^{-\f{\nu}{2}}(\s-\b{\s})\|^{2}-C(\v+\delta)\|\sg^{-\f{\nu}{2}}(\h_{x},\s_{x},\b{\h}_{x},\b{\s}_{x}) \|^{2}
 \\
 &
 -C\f{\v}{\delta^{\nu}}(\h^{2}(t,0)+\b{\h}^{2}(t,0)) 
 -C(\v+\delta)\| \sg^{-\f{\nu-2}{2}}(\h,\s,\b{\h},\b{\s}) \|^{2}
\nonumber
\end{aligned}\end{equation}
\begin{equation}
\begin{split}
&\geq 
c \|\sg^{-\f{\nu-2}{2}} (\h,\s,\b{\h},\b{\s}) \|^{2}+c\|\sg^{-\f{\nu-1}{2}}(\hat{\r},\hat{n})  \|^{2}+c\| \sg^{-\f{\nu}{2}}(\b{\s}-\s) \|^{2}~~~~~~~~~~~~~~~~~~~~~~~~~\\&
\quad -C(\delta+\v)\| \sg^{-\f{\nu}{2}}(\h_{x},\s_{x},\b{\h}_{x},\b{\s}_{x}) \|^{2}-C\v\f{\h^{2}(t,0)+\b{\h}^{2}(t,0)}{\delta^{\nu}}
 ,
	\label{lower bdd}
\end{split}
\end{equation}
where  
the positive  definite matrix  $\boldsymbol{M_{5}}$ is  defined by
\be
\boldsymbol{ M_{5}}=
\begin{pmatrix}
\f{3}{4}n_{+}&\f{  \sqrt{(\mu+n_{+})n_{+}}  }{2}ab \nu\sg\\
\f{\sqrt{(\mu+n_{+})n_{+}}}{2} ab\nu \sg   & \quad \f{3}{4}a \f{A_{1}\gm(\gm+1)\r_{+}^{\gm}+A_{2}\a(\a+1)n_{+}^{\a}}{2|u_{+}|^{2}}[(1+\nu)-\f{\nu(\nu-1)}{2(1+b^{2})}]\sg^{2}
\end{pmatrix}.
\ee 
 Then, we  consider the case $\nu \in [3, 2+\sqrt{8+\f{1}{1+b^{2}}})$ 
using the Lemma \ref{hardy lem} with $\zeta=\sg^{-(\nu-1)}$. Therefore,  with the aid of  $(\ref{a})$, the sixth term is estimated as below:
\begin{equation}
\begin{aligned}
& \int \sg^{-\nu}( \mu \s^{2}_{x}+n\b{\s}_{x}^{2})dx
\\
\geq&
a^{2}(\mu+n_{+})\f{(\nu-1)^{2}}{4}\|\sg^{-\f{\nu-2}{2}} \hat{v} \|^{2}-C\delta^{\f{1}{2}}(\|\sg^{-\f{\nu-2}{2}}\hat{v} \|^{2}+\| \sg^{-\f{\nu-1}{2}}(\hat{\r},\hat{n}) \|^{2})
\\
&
 -C(\delta+\v)\|\sg^{-\f{\nu}{2}} (\b{\s}-\s,\b{\s}_{x} )\|^{2}.
	\label{six term}
\end{aligned}
\end{equation}
For $\nu\in (3,\lambda]$, adding  $(\ref{lower bdd})$ to $(\ref{six term})$, taking $k=\nu B[4(1+b^{2})\nu+4b^{2}+5-\nu^{2}]^{-\f{1}{2}}\in (0,1)$, and  using  $(\ref{a})$ and $c|(\h,\b{\h},\b{\s})|\leq |(\hat{\r},\hat{n},\hat{v})| \leq C|(\h,\b{\h},\b{\s})|$, we  have 
\begin{equation}
\begin{aligned}
&
 -a\nu \int\sg^{-(\nu-1)}G_{1}dx-a\nu \int \sg^{-(\nu-1)}G_{2}dx+\int \sg^{-\nu} n(\b{\s}-\s)^{2}dx
 +\int\sg^{-\nu}R_{1}dx
 \\
 &
+\int\sg^{-\nu}(\mu \s_{x}^{2}+n\b{\s}_{x}^{2})dx
\\
 \geq&
  c\| \sg^{-\f{\nu-1}{2}} (\hat{\r},\hat{n}) \|^{2}+\int\sg^{-\nu}(\s-\b{\s},\hat{v})\boldsymbol{M_{6}}(\s-\b{\s},\hat{v})^{\rm T}dx +(1-k)\{n_{+}\| \sg^{-\f{\nu}{2}}(\s-\b{\s}) \|^{2}
  \\
 &+a\f{A_{1}\gm(\gm+1)\r_{+}^{\gm}+A_{2}\a(\a+1)n_{+}^{\a}}{2|u_{+}|^{2}}[1+\nu-\f{\nu(\nu-1)}{2(1+b^{2})}+\f{(\nu-1)^{2}}{4(1+b^{2})}]   
 \|\sg^{-\f{\nu-2}{2}} \hat{v} \|^{2}\}
 \\
 &
-C\delta^{\f{1}{2}}(\|\sg^{-\f{\nu-1}{2}} (\hat{\r},\hat{n})  \|^{2}+\| \sg^{-\f{\nu-2}{2}}\hat{v} \|^{2})-C(\v+\delta)\|\sg^{-\f{\nu}{2}}(\h_{x},\s_{x},\b{\h}_{x},\b{\s}_{x}) \|^{2}
\\
&
-C(\v+\delta^{\f{1}{2}})\|  \sg^{-\f{\nu}{2}}(\s-\b{\s})\|^{2}-C\f{\v}{\delta^{\nu}}(\h^{2}(t,0)+\b{\h}^{2}(t,0)) 
-C(\v+\delta)\| \sg^{-\f{\nu-2}{2}}(\h,\s,\b{\h},\b{\s}) \|^{2}
\\
\geq&
 c\| \sg^{-\f{\nu-2}{2}}(\h,\s,\b{\h},\b{\s}) \|^{2}+c\|\sg^{-\f{\nu-1}{2}}(\hat{\r},\hat{n})  \|^{2}+c\| \sg^{-\f{\nu}{2}}(\b{\s}-\s,\s_{x},\b{\s}_{x}) \|^{2}
 \\
 &
-C(\delta+\v)\| \sg^{-\f{\nu}{2}}(\h_{x},\b{\h}_{x}) \|^{2} -C\f{\v}{\delta^{\nu}} (\h^{2}(t,0)+\b{\h}^{2}(t,0))
,
\end{aligned}
\end{equation}
where  $\delta^{\f{1}{2}}$ and $\varepsilon$ are  small enough, and the positive definite matrix  $\boldsymbol{M_{6}}$ is defined as 
\be \boldsymbol{M_{6}}=
\begin{pmatrix}
kn_{+}&\f{ \sqrt{(\mu+n_{+})n_{+}}}{2} a b\nu \sg\\
\f{ \sqrt{(\mu+n_{+})n_{+}}}{2}ab\nu\sg   & \quad ka \f{A_{1}\gm(\gm+1)\r_{+}^{\gm}+A_{2}\a(\a+1)n_{+}^{\a}}{2|u_{+}|^{2}}[1+\nu-\f{\nu(\nu-1)}{2(1+b^{2})}+\f{(\nu-1)^{2}}{4(1+b^{2})}]\sg^{2}
	\end{pmatrix}.
\ee 
By $(\ref{sigma})$-$(\ref{sigma 1})$, $(\ref{bar rho u})$-$(\ref{P 1})$, Cauchy-Schwarz inequality  and $M_{+}=1$,
we estimate  terms on the  right hand side  as
\be 
\begin{aligned}
&	|\int \sg^{-\nu}R_{2}dx+\int \sg^{-\nu} R_{3}dx|
 \\
\leq&
 C\delta^{\f{1}{2}}\|\sg^{-\f{\nu-1}{2}}(\hat{\r},\hat{n})\|^{2}+C\delta^{\f{1}{2}}\| \sg^{-\f{\nu-2}{2}}(\h,\s,\b{\h},\b{\s}) \|^{2}+C\delta\|\sg^{-\f{\nu}{2}} (\b{\s}_{x},\b{\s}-\s) \|^{2}.
\label{R e}
\end{aligned}
\ee 
Finally, taking $ \delta^{\f{1}{2}}$ and $\varepsilon$   small enough, and combining $(\ref{second term e2})$-$(\ref{R e})$
,  we  obtain
\begin{align}
&
\frac{d}{dt}  	\int 	\sg^{-\nu} ( \mc{E}_{1}+\mc{E}_{2} )dx
+c  \|\sg^{-\f{\nu-2}{2}}(\h,\s,\b{\h},\b{\s}) \|^{2} 
+c \| \sg^{-\f{\nu}{2}}( \s_{x},\b{\s}_{x},\b{\s}-\s) \|^{2}
+\f{c}{\delta^{\nu}} [\h^{2}(t,0)+\b{\h}^{2}(t,0)]
\notag
\\
\leq&
  C(\delta+\v) \|\sg^{-\f{\nu}{2}} (\h_{x},\b{\h}_{x})\|^{2}. 
\label{r1}
\end{align}
%
%
%
Multiplying $(  \ref{r1}) $ by 
$(1+\delta  \tau)^{\xi}$
and integrating the resulted equation in $\tau$
over $[0,t]$,
we obtain $(\ref{L^{2} e2})$. The proof of Lemma \ref{lem L^{2} e2} is completed.
\end{proof}
In order to show  Proposition $\ref{prop M_{+}=1 time decay }$,  we need to obtain the high order weighted estimates of 
$  (\h,\s,\b{\h},\b{\s} )$.
\begin{lem}
	\label{lem high}
	Under the same conditions in Proposition $\ref{prop M_{+}=1 time decay }$, then the solution $(\h,\s,\b{\h},\b{\s})$ to the IBVP $(\ref{f1})$-$(\ref{boundary d1})$ satisfies  for $t \in[0,T]$ that
\begin{equation}
\begin{aligned}
&	
(1+\delta t)^{\xi} \| \sg^{-\f{\nu}{2}}(\h_{x},\b{\h}_{x}) \|^{2}
+\int_{0}^{t}(1+\delta \tau)^{\xi}\|\sg^{-\f{\nu}{2}} (\h_{x},\b{\h}_{x}) \|^{2}d\tau
\\
\leq&
 C (\| \sg^{-\f{\l}{2}}(\h_{0},\s_{0},\b{\h}_{0},\b{\s}_{0}) \|^{2}+\| \sg^{-\f{\l}{2}}(\h_{0x},\b{\h}_{0x})\|^{2})+C(\v+\delta)\int(1+\delta \tau)^{\xi}\|\sg^{-\f{\nu}{2}}(\s_{xx},\b{\s}_{xx}) \|^{2}d\tau
 \\
 &
+\delta\xi\int_{0}^{t}(1+\delta \tau)^{\xi-1}\| \sg^{-\f{\nu}{2}}(\h,\s,\h_{x},\b{\h},\b{\s},\b{\h}_{x}) \|^{2}d\tau,
\label{h-e-t}
\end{aligned}
\end{equation}
with  $\xi\geq 0$. 
\end{lem}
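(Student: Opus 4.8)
The plan is to mimic the structure of Lemma \ref{lem e1} (the non-weighted first-order estimate), but carrying along the space weight $\sg^{-\nu}$ and, simultaneously, the time weight $(1+\delta t)^{\xi}$. First I would differentiate $(\ref{f1})_1$ in $x$, multiply by $\mu\sg^{-\nu}\h_x$, and multiply $(\ref{f1})_2$ by $\wt{\r}^2\sg^{-\nu}\h_x$; analogously differentiate $(\ref{f1})_3$ in $x$, multiply by $\sg^{-\nu}\b{\h}_x$, and multiply $(\ref{f1})_4$ by $\wt{n}\sg^{-\nu}\b{\h}_x$. Adding the four identities and integrating over $\mb{R}_+$ produces, after moving the boundary contributions and integrating by parts, an energy identity of the form
\begin{equation}
\frac{d}{dt}\int \sg^{-\nu}\Big(\mu\frac{\h_x^2}{2}+\frac{\b{\h}_x^2}{2}+\wt{\r}^2\h_x\s+\wt{n}\b{\h}_x\b{\s}\Big)dx
+\int\sg^{-\nu}\Big(\wt{\r}^2\frac{p_1'(\r)}{\r}\h_x^2+\wt{n}\frac{p_2'(n)}{n}\b{\h}_x^2\Big)dx
=\sum_{i=1}^{4}\widehat{J}_i,
\nonumber
\end{equation}
where the $\widehat{J}_i$ are the weighted analogues of the $J_i$ in Lemma \ref{lem e1}, now also containing terms with $\p_x(\sg^{-\nu})=a\nu\sg^{-(\nu-1)}\big(1+O(|\sg|)\big)$ coming from integration by parts against the weight. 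The boundary term at $x=0$ contributes $-u_-\sg^{-\nu}(0)\big(\mu\h_x^2(t,0)+\b{\h}_x^2(t,0)\big)/2\geq 0$ since $u_-<0$, so it has the good sign and can be dropped. The second integral on the left is bounded below by $c\|\sg^{-\nu/2}(\h_x,\b{\h}_x)\|^2$ up to $C(\v+\delta)\|\sg^{-\nu/2}(\h_x,\b{\h}_x)\|^2$, using $\inf\wt{\r}>0$, $\inf\wt{n}>0$ and the $L^\infty$ smallness $(\ref{infty e2})$.

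Next I would estimate the right-hand side terms $\widehat{J}_i$ exactly as in Lemma \ref{lem e1}, but tracking the weight. Using $(\ref{sigma})$–$(\ref{sig})$ for the decay of $\wt{\r}_x,\wt{u}_x,\wt{n}_x,\wt{v}_x,\wt{u}_{xx},\wt{v}_{xx}$, the Cauchy–Schwarz and Young inequalities with a small parameter $\e$, Lemma \ref{lem non f} for the cubic terms (which is why $\nu\ge1$ is assumed, via $\l\ge1$), and the decomposition $\s=\b{\s}+(\s-\b{\s})$, one gets
\begin{equation}
\Big|\sum_{i=1}^4\widehat{J}_i\Big|\leq
C\|\sg^{-\frac{\nu}{2}}(\s_x,\b{\s}_x,\b{\s}-\s)\|^2
+C(\v+\delta+\e)\|\sg^{-\frac{\nu}{2}}(\h_x,\b{\h}_x)\|^2
+C(\v+\delta)\|\sg^{-\frac{\nu}{2}}(\s_{xx},\b{\s}_{xx})\|^2
+C\frac{\v}{\delta^\nu}(\h^2(t,0)+\b{\h}^2(t,0)),
\nonumber
\end{equation}
together with a harmless extra term $C\nu\|\sg^{-\frac{\nu-1}{2}}(\cdots)\|^2$ of lower-order-in-$\sg$ type and a term $C\delta\|\sg^{-\frac{\nu-2}{2}}(\h,\s,\b{\h},\b{\s})\|^2$; all of these are absorbed later by Lemma \ref{lem L^{2} e2}. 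Choosing $\e,\v,\delta$ small and absorbing $C(\v+\delta)\|\sg^{-\nu/2}(\h_x,\b{\h}_x)\|^2$ and $C(\v+\delta)\|\sg^{-\nu/2}(\s_{xx},\b{\s}_{xx})\|^2$ (the latter retained with a small constant as in the statement), I obtain the differential inequality
\begin{equation}
\frac{d}{dt}\int\sg^{-\nu}\Big(\mu\frac{\h_x^2}{2}+\frac{\b{\h}_x^2}{2}+\wt{\r}^2\h_x\s+\wt{n}\b{\h}_x\b{\s}\Big)dx
+c\|\sg^{-\frac{\nu}{2}}(\h_x,\b{\h}_x)\|^2
\leq C\|\sg^{-\frac{\nu}{2}}(\s_x,\b{\s}_x,\b{\s}-\s)\|^2
+C(\v+\delta)\|\sg^{-\frac{\nu}{2}}(\s_{xx},\b{\s}_{xx})\|^2
+C\frac{\v}{\delta^\nu}(\h^2(t,0)+\b{\h}^2(t,0)).
\nonumber
\end{equation}

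Finally I would close the time-weighted estimate: multiply the last inequality by $(1+\delta t)^\xi$, note that $\frac{d}{dt}\big[(1+\delta t)^\xi(\cdots)\big]=(1+\delta t)^\xi\frac{d}{dt}(\cdots)+\delta\xi(1+\delta t)^{\xi-1}(\cdots)$, so the derivative-of-weight term produces $C\delta\xi(1+\delta t)^{\xi-1}\|\sg^{-\nu/2}(\h_x,\b{\h}_x,\h,\s,\b{\h},\b{\s})\|^2$ after using Young's inequality on the cross terms $\wt{\r}^2\h_x\s$, $\wt{n}\b{\h}_x\b{\s}$. Integrating in $\tau$ over $[0,t]$, using the equivalence of $\int\sg^{-\nu}(\mu\h_x^2/2+\b{\h}_x^2/2+\wt{\r}^2\h_x\s+\wt{n}\b{\h}_x\b{\s})dx$ with $\|\sg^{-\nu/2}(\h_x,\b{\h}_x)\|^2$ modulo a small multiple of $\|\sg^{-\nu/2}(\s,\b{\s})\|^2$ (again Young, with smallness of $\delta$), and then invoking Lemma \ref{lem L^{2} e2} to absorb the terms $\int_0^t(1+\delta\tau)^\xi\|\sg^{-\nu/2}(\s_x,\b{\s}_x,\b{\s}-\s)\|^2d\tau$ and $\int_0^t(1+\delta\tau)^\xi\delta^{-\nu}(\h^2(\tau,0)+\b{\h}^2(\tau,0))d\tau$ and the $L^2$-level time-weighted norms, I arrive at $(\ref{h-e-t})$. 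The main obstacle I anticipate is bookkeeping the weight-derivative terms consistently: each integration by parts against $\sg^{-\nu}$ generates factors $\sg^{-(\nu-1)}$ or $\sg^{-(\nu-2)}$ which are a priori \emph{larger} than $\sg^{-\nu}$ near $x=+\infty$ only if we misread signs — in fact $\sg\le C\delta$ small makes $\sg^{-(\nu-1)}\le C\delta^{-1}\sg^{-\nu}\cdot\sg^2$-type comparisons go the right way — so one must carefully pair every such term either with a genuinely dissipative quantity from Lemma \ref{lem L^{2} e2} (the $\|\sg^{-(\nu-2)/2}(\h,\s,\b{\h},\b{\s})\|^2$ and $\|\sg^{-(\nu-1)/2}(\hat\r,\hat n)\|^2$ dissipations) or with the small factor $\delta$ or $\v$, exactly as the statement's right-hand side is organized; the quadratic-form cancellations that were crucial in Lemma \ref{lem L^{2} e2} are not needed here because the second-order dissipation $\wt{\r}^2 p_1'(\r)\r^{-1}\h_x^2+\wt{n}p_2'(n)n^{-1}\b{\h}_x^2$ is already coercive in $(\h_x,\b{\h}_x)$.
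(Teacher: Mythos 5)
Your proposal is correct and follows essentially the same route as the paper: the paper likewise takes the identities $(\ref{h_{x}})$--$(\ref{bs_{x}})$ underlying Lemma \ref{lem e1}, multiplies by $\sg^{-\nu}$ and integrates, uses the sign of $u_{-}<0$ for the boundary term, the coercivity of $\int\sg^{-\nu}(\wt{\r}^{2}\f{p'_{1}(\r)}{\r}\h_{x}^{2}+\wt{n}\f{p'_{2}(n)}{n}\b{\h}_{x}^{2})dx$, replaces $\h_{t},\b{\h}_{t}$ via $(\ref{f1})_{1}$, $(\ref{f1})_{3}$ in the weight-derivative terms of order $\sg^{-(\nu-1)}$, and then multiplies by $(1+\delta\tau)^{\xi}$ and absorbs the remaining $\|\sg^{-\f{\nu-2}{2}}(\h,\s,\b{\h},\b{\s})\|^{2}$, $\|\sg^{-\f{\nu}{2}}(\s_{x},\b{\s}_{x},\b{\s}-\s)\|^{2}$ and boundary contributions with Lemma \ref{lem L^{2} e2}, exactly as you outline.
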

\begin{proof}
Adding $(\ref{h_{x}})$-$(\ref{bs_{x}})$ together, and multiplying the resulted equation by $\sg^{-\nu}$ with the weight function $\sg\geq0$ satisfying $(\ref{sig})$ and $(\ref{sg0})$,  we  integrate the resulted equation  in $x$ over $\mb{R}_{+}$ to  obtain
	\be 
\begin{aligned} 
	&
	\f{d}{dt}\int \sg^{-\nu}(\mu \f{\h_{x}^{2}}{2}+\f{\b{\h}_{x}^{2}}{2}+\wt{\r}^{2}\h_{x}\s +\wt{n}\b{\h}_{x}\b{\s})dx-[\sg^{-\nu}(\mu u\f{\h_{x}^{2}}{2}+v\f{\b{\h}_{x}^{2}}{2}-\wt{\r}^{2}\h_{t}\s-\wt{n}\b{\h}_{t}\b{\s})](t,0)
	\\
	&
-a\nu \int\sg^{-(\nu-1)}[\mu u\f{\h_{x}^{2}}{2}+v\f{\b{\h}_{x}^{2}}{2}-\wt{\r}^{2}\h_{t}\s-\wt{n}\b{\h}_{t}\b{\s}] dx	
	+\int \sg^{-\nu}(\wt{\r}^{2}\f{p'_{1}(\r)}{\r}\h_{x}^{2}
	+\wt{n}\f{p'_{2}(n)}{n}\b{\h}_{x}^{2})dx
	\\
	=&
	\sum_{i=1}^{6}\mc{J}_{i},
	\label{1f-1}
\end{aligned}
\ee 
where
\be 
\begin{aligned}
	\mc{J}_{1}=&
	-\int\sg^{-\nu}[ \wt{\r}^{2}(\h_{t}+u\h_{x})\s_{x}+\wt{n}(\b{\h}_{t}+v\b{\h}_{x})\b{\s}_{x}+2\wt{\r}\wt{\r}_{x}\h_{t}\s+\wt{n}_{x}\b{\h}_{t}\b{\s}]dx,
	\\
	\mc{J}_{2}=&
	\int\sg^{-\nu} [-\mu \h\h_{x}\s_{xx}-\b{\h}\b{\h}_{x}\b{\s}_{xx}+ \mu\wt{\r}^{2}(\f{1}{\r}-\f{1}{\wt{\r}})\h_{x}\s_{xx}+\wt{n}(\f{1}{n}-\f{1}{\wt{n}})(\wt{n}\b{\s}_{x})_{x}\b{\h}_{x}]dx, ~~~~~~~~~~~~~~
	\\
	\mc{J}_{3}=&
	\int\sg^{-\nu}[\wt{\r}^{2}\f{n}{\r}(\b{\s}-\s)\h_{x}-\wt{n}(\b{\s}-\s)\b{\h}_{x}]dx,
\\
	\mc{J}_{4}=&
	-\int\sg^{-\nu} ( \f{3}{2}\mu \s_{x}\f{\h_{x}^{2}}{2}+\f{3}{2}\b{\s}_{x}\b{\h}_{x}^{2})dx+\int\sg^{-\nu} \wt{n}\f{(\b{\h}\b{\s}_{x})_{x}}{n}\b{\h}_{x}dx,\quad 
	\mc{J}_{5}=\int\sg^{-\nu}( F_{1}\wt{\r}^{2}\h_{x}+F_{2}\wt{n}\b{\h}_{x})dx,
	\\	
	\mc{J}_{6}=&
	-\int\sg^{-\nu} [\mu (
	\f{1}{2}\h_{x}\wt{u}_{x}+\s_{x}\wt{\r}_{x})\h_{x}+\mu (\h\wt{u}_{x}+\s\wt{\r}_{x})_{x}\h_{x}
	\\
	&
	-\wt{n}_{x}\b{\h}_{x}\b{\s}_{xx}+(
	\f{1}{2}\b{\h}_{x}\wt{v}_{x}+\b{\s}_{x}\wt{n}_{x})\b{\h}_{x}-(\b{\h}\wt{v}_{x}+\b{\s}\wt{n}_{x})_{x}\b{\h}_{x}]dx.
	\nonumber
\end{aligned}
\ee 
Owing to $(\ref{sigma})$, $(\ref{boundary d1})$, and $(\ref{infty e2})$, we  obtain the estimates for terms on the left hand side as below
 \begin{align}
 -&[\sg^{-\nu}(\mu u\f{\h_{x}^{2}}{2}+v\f{\b{\h}_{x}^{2}}{2}-\wt{\r}\h_{x}\s-\wt{n}\b{\h}_{x}\b{\s})](t,0)
 \geq \f{c}{\delta^{\nu}}(\h^{2}_{x}(t,0)+\b{\h}^{2}_{x}(t,0))
 \geq 0,~~~~~~~~~~
 \label{h-l1}
 \\
& 
-a\nu \int\sg^{-(\nu-1)}(\mu u\f{\h_{x}^{2}}{2}+v\f{\b{\h}_{x}^{2}}{2})dx
\notag
\\ \geq& \f{a\nu |u_{+}| }{2}\int\sg^{-(\nu-1)}(\mu \h_{x}^{2}+\b{\h}_{x}^{2})dx
-C(\v+\delta)\|\sg^{-\f{\nu-1}{2}} (\h_{x},\b{\h}_{x})\|^{2},
\\
& 
 -a\nu \int \sg^{-(\nu-1)}(-\wt{\r}^{2}\h_{t}\s-\wt{n}\b{\h}_{t}\b{\s})dx
 \notag
 \\
\geq &
-C\v\|\sg^{-\f{\nu}{2}} (\h_{x},\b{\h}_{x}) \|^{2}-C\| \sg^{-\f{\nu-2}{2}}(\h,\s,\b{\h},\b{\s}) \|^{2}-C\|\sg^{-\f{\nu}{2}}(\s_{x},\b{\s}_{x})  \|^{2},
\end{align}
\begin{align}
 \int& \sg^{-\nu}(\wt{\r}^{2}\f{p'_{1}(\r)}{\r}\h_{x}^{2}
+\wt{n}\f{p'_{2}(n)}{n}\b{\h}_{x}^{2})dx
\geq \f{A_{1}\gm \r_{+}^{\gm}}{2}\| \sg^{-\f{\nu}{2}}\h_{x} \|^{2}+\f{A_{2}\a n_{+}^{\a-1}}{2}\|\sg^{-\f{\nu}{2}}\b{\h}_{x} \|^{2},~
\end{align}		
where we take  $\delta $ and $\v$ small enough. \\
We turn to estimate terms on the right hand side of $(\ref{1f-1})$.
With the help of  $(\ref{sigma})$, $(\ref{infty e2})$,  Young inequality and  Cauchy-Schwarz inequality, we  gain
\begin{align} 
|\mc{J}_{1}|\leq &
C\delta \| \sg^{-\f{\nu}{2}}(\h_{x},\b{\h}_{x}) \|^{2}+ C\| \sg^{-\f{\nu}{2}}(\s_{x},\b{\s}_{x}) \|^{2} +C\| \sg^{-\f{\nu-2}{2}}(\h,\s,\b{\h},\b{\s}) \|^{2},
\\
|\mc{J}_{2}|\leq &
C(\| (\h,\b{\h})\|_{L^{\infty}}+\delta)\| \sg^{-\f{\nu}{2}}(\h_{x},\b{\h}_{x},\s_{xx},\b{\s}_{xx})\|^{2}+C\delta^{2}\|\sg^{-\f{\nu}{2}}\b{\s}_{x} \|^{2}
\notag
\\
\leq&
 C(\delta+\v)\| \sg^{-\f{\nu}{2}}(\h_{x},\b{\h}_{x}) \|^{2}+C(\delta+\v)\|\sg^{-\f{\nu}{2}} (\s_{xx},\b{\s}_{xx}) \|^{2}+C\delta \|\sg^{-\f{\nu}{2}} \b{\s}_{x} \|^{2},
\\
|\mc{J}_{3}|\leq&
 \f{A_{1}\gm\r_{+}^{\gm}}{8}\| \sg^{-\f{\nu}{2}}\h_{x} \|^{2}+\f{A_{2}\a n_{+}^{\a-1}}{8}\|\sg^{-\f{\nu}{2}}\b{\h}_{x}  \|^{2}+C\|\sg^{-\f{\nu}{2}}( \b{\s}-\s )\|^{2},
 \\
|\mc{J}_{4}| 
\leq&
 C\v\| \sg^{-\f{\nu}{2}}(\h_{x},\b{\h}_{x}) \|^{2}+ C\v\| \sg^{-\f{\nu}{2}}(\s_{xx},\b{\s}_{xx}) \|^{2}+C\v\|\sg^{-\f{\nu}{2}} (\s_{x},\b{\s}_{x}) \|^{2},
\\
|\mc{J}_{5}| \leq &
 C\delta \|\sg^{-\f{\nu}{2}}(\h_{x},\b{\h}_{x}) \|^{2}+C\delta \| \sg^{-\f{\nu-2}{2}}(\h,\s,\b{\h},\b{\s}) \|^{2},~~~~~~~~~~~~~~~~~~~~~~~
\\|\mc{J}_{6}| \leq&
 C\delta\| \sg^{-\f{\nu}{2}}(\h_{x},\b{\h}_{x}) \|^{2}+C\delta^{2}\| \sg^{-\f{\nu}{2}}\b{\s}_{xx} \|^{2}+C\delta \|\sg^{-\f{\nu}{2}} (\s_{x},\b{\s}_{x}) \|^{2}+C\delta \| \sg^{-\f{\nu-2}{2}}(\h,\s,\b{\h},\b{\s}) \|^{2}.
\label{h-r6}
\end{align}
Finally,  the substitution of $(\ref{h-l1})$-$( \ref{h-r6})$ into $(\ref{1f-1})$ for $\delta $ and $\varepsilon$  small enough leads to that 
\begin{equation}
\begin{aligned}
&
 \frac{d}{dt}	\int \sg^{-\nu}[  \mu \f{\h_{x}^{2}}{2}+\f{\b{\h}_{x}^{2}}{2} 	+\wt{\r}^{2}\h_{x}\s+\wt{n} \b{\h}_{x}\b{\s} ]	dx     +c \|\sg^{-\f{\nu-1}{2}}  (\h_{x},\b{\h}_{x} ) \|^{2}  
 \\
 &
+\f{A_{1}\gm\r_{+}^{\gm}}{4}\|\sg^{-\f{\nu}{2}} \h_{x} \|^{2}+\f{A_{2}\a n_{+}^{\a-1}}{4} \|\sg^{-\f{\nu}{2}}\b{\h}_{x} \|^{2}
\\
\leq& 
C(\v+\delta)\| \sg^{-\f{\nu}{2}}(\s_{xx},\b{\s}_{xx}) \|^{2}+C\| \sg^{-\f{\nu-2}{2}}(\h,\s,\b{\h},\b{\s}) \|^{2}+C\| \sg^{-\f{\nu}{2}} (\s_{x},\b{\s}_{x},\b{\s}-\s) \|^{2}.~~~~~~~~~
\label{high order weighted e}
\end{aligned}
\end{equation}
Multiplying $(\ref{high order weighted e})$  by $(1+\delta \tau)^{\xi}$ and integrating the resulted equation  in $\tau$ over $[0,t]$, and using Cauchy-Schwarz inequality and Lemma $\ref{lem L^{2} e2}$, we obtain the desired estimate $(\ref{h-e-t})$. The proof of Lemma $\ref{lem high}$ is completed.
%
\end{proof}

\begin{lem}
	\label{psi_{xx} estimate lem}	
Under the same conditions in Proposition $\ref{prop M_{+}=1 time decay }$, then the solution $(\h,\s,\b{\h},\b{\s})$ to the IBVP $(\ref{f1})$-$(\ref{boundary d1})$ satisfies  for $t \in[0,T]$ that
\begin{equation}
\begin{aligned}
&
(1+ \delta t)^{\xi} \|\sg^{-\f{\nu}{2}}(\s_{x},\b{\s}_{x})\|^{2}	+\int^{t}_{0} 	( 1+\delta \tau  )^{\xi}	\|\sg^{-\f{\nu}{2}}(\s_{xx},\b{\s}_{xx})\|^{2} d\tau 
\\
\leq&
C\| \sg^{-\f{\l}{2}}(\h_{0},\s_{0},\h_{0x},\s_{0x},\b{\h}_{0},\b{\s}_{0},\b{\h}_{0x},\b{\s}_{0x})  \|^{2} 
\\
&
+C \delta  \xi 	\int^{t}_{0}	( 1   +\delta \tau  )^{\xi-1 }	\|\sg^{-\f{\nu}{2}}  ( \h,\s,\h_{x},\s_{x},\b{\h},\b{\s},\b{\h}_{x},\b{\s}_{x} )  \|^{2} d\tau,
\label{psi_{xx} time e 1}
\end{aligned}
\end{equation}
	with $\xi\geq0$.
\end{lem}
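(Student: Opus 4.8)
\textbf{Proof strategy for Lemma \ref{psi_{xx} estimate lem}.}
The plan is to mimic the proof of Lemma \ref{lem e1'} but now carrying the singular space weight $\sg^{-\nu}$ through every step and tracking the extra time factor $(1+\delta\tau)^{\xi}$. First I would multiply $(\ref{f1})_{2}$ by $-\sg^{-\nu}\s_{xx}$ and $(\ref{f1})_{4}$ by $-\sg^{-\nu}\b{\s}_{xx}$, add the two identities and integrate over $\mb{R}_{+}$. This produces $\f{d}{dt}\int\sg^{-\nu}(\f{\s_{x}^{2}}{2}+\f{\b{\s}_{x}^{2}}{2})dx$ plus the good dissipative terms $\int\sg^{-\nu}(\f{\mu}{\r}\s_{xx}^{2}+\b{\s}_{xx}^{2})dx$ on the left, together with a family of remainder integrals analogous to $K_{1},K_{2},K_{3}$ in Lemma \ref{lem e1'}, now all weighted by $\sg^{-\nu}$; there is in addition the commutator term coming from differentiating the weight, i.e. a term of the form $a\nu\int\sg^{-(\nu-1)}\s_{x}\s_{xx}dx$ (and its $\b{\s}$ analogue) arising because $\p_{x}(\sg^{-\nu})=a\nu\sg^{-(\nu-1)}+O(\sg^{-(\nu-2)}\cdot\sg^{2})$ by $(\ref{sg0})$; by Young's inequality this is absorbed, producing a controllable $\|\sg^{-\f{\nu-1}{2}}\s_{x}\|^{2}$ contribution that is no larger than $\|\sg^{-\f{\nu}{2}}\s_{x}\|^{2}$.

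Next I would estimate the lower bound of the dissipation exactly as in $(\ref{s_{xx} e})$, using the decomposition $\f1\r=(\f1\r-\f1{\wt\r})+(\f1{\wt\r}-\f1{\r_+})+\f1{\r_+}$ and the priori bound $(\ref{infty e2})$ together with $(\ref{sigma})$, to get $\int\sg^{-\nu}(\f{\mu}{\r}\s_{xx}^{2}+\b{\s}_{xx}^{2})dx\geq\f{\mu}{\r_+}\|\sg^{-\f\nu2}\s_{xx}\|^{2}+\|\sg^{-\f\nu2}\b{\s}_{xx}\|^{2}-C(\v+\delta)\|\sg^{-\f\nu2}\s_{xx}\|^{2}$. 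The weighted analogues of $K_{1},K_{2},K_{3}$ are then bounded: for the $K_{1}$-type terms one uses Cauchy--Schwarz, Sobolev embedding and $(\ref{infty e2})$ to obtain $\f{\mu}{16\r_+}\|\sg^{-\f\nu2}\s_{xx}\|^{2}+\f1{16}\|\sg^{-\f\nu2}\b{\s}_{xx}\|^{2}+C\|\sg^{-\f\nu2}(\h_x,\s_x,\b{\h}_x,\b{\s}_x,\b{\s}-\s)\|^{2}$; for the $K_{2}$-type terms one uses $(\ref{sigma})$--$(\ref{sig})$ and the Hardy-type inequality Lemma \ref{hardy lem} (exactly as in the proof of Lemma \ref{lem L^{2} e2}) to trade a weight-derivative factor $\wt\r_{x},\wt v_{x}=O(\sg^{2})$ against $\sg^{-\nu}$ and produce $C\delta(\|\sg^{-\f{\nu-2}{2}}(\h,\s,\b{\h},\b{\s})\|^{2}+\|\sg^{-\f\nu2}(\h_x,\s_x,\b{\h}_x,\b{\s}_x,\s_{xx},\b{\s}_{xx})\|^{2})+\f{C\delta}{\delta^{\nu}}(\h^{2}(t,0)+\b{\h}^{2}(t,0))$; for the $K_{3}$-type terms, which are genuinely cubic, I would invoke the nonlinear estimate Lemma \ref{lem non f} to keep them under control by $C\v$ times weighted norms. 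Collecting everything and choosing $\delta,\v$ small yields the differential inequality
\[
\f{d}{dt}\int\sg^{-\nu}(\s_x^{2}+\b{\s}_x^{2})dx+c\|\sg^{-\f\nu2}(\s_{xx},\b{\s}_{xx})\|^{2}\leq C\|\sg^{-\f{\nu-2}{2}}(\h,\s,\b{\h},\b{\s})\|^{2}+C\|\sg^{-\f\nu2}(\h_x,\s_x,\b{\h}_x,\b{\s}_x,\b{\s}-\s)\|^{2}+\f{C\delta}{\delta^{\nu}}(\h^{2}(t,0)+\b{\h}^{2}(t,0)).
\]
Multiplying by $(1+\delta\tau)^{\xi}$, integrating in $\tau\in[0,t]$ (which generates the term $\delta\xi\int_0^t(1+\delta\tau)^{\xi-1}\|\sg^{-\f\nu2}(\s_x,\b{\s}_x)\|^{2}d\tau$ on the left that we keep only as a remainder on the right of the final estimate), and finally absorbing all lower-order right-hand sides by Lemmas \ref{lem L^{2} e2} and \ref{lem high} together with the smallness of $\delta,\v$, gives $(\ref{psi_{xx} time e 1})$.

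The main obstacle I anticipate is handling the boundary/singular-weight interaction precisely: since $\sg^{-\nu}$ blows up like $(1+\delta x)^{\nu}$ at spatial infinity and $\sg(0)\sim\delta$ near the boundary, the weight-derivative terms $\p_x(\sg^{-\nu})$ and the terms $K_2$ involving $\wt\r_x,\wt v_x,\wt u_{xx},(\wt n\wt v_x)_x$ must be estimated with the sharp algebraic decay $(\ref{sigma})$--$(\ref{sig})$ rather than crudely, and the Hardy inequality (Lemma \ref{hardy lem}) has to be applied with the correct choice $\zeta=\sg^{-(\nu-1)}$ so that the boundary term it generates matches the favorable sign term $-u_-\sg^{-\nu}(0)(\mu\s_x^{2}+\b{\s}_x^{2})(t,0)/2\ge0$ coming from integration by parts; getting all these constants to close with only the available smallness of $\delta$ and $\v$, and verifying that no uncontrolled power of $1/\delta$ survives, is the delicate bookkeeping step. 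Since this parallels the already-established Lemmas \ref{lem e1'} and \ref{lem L^{2} e2} line by line, I expect no conceptual difficulty beyond this careful weight-tracking, and I would present the computation compactly, citing those lemmas for the repeated sub-estimates.
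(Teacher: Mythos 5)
Your overall route is the one the paper takes: multiply $(\ref{f1})_{2}$ by $-\sg^{-\nu}\s_{xx}$ and $(\ref{f1})_{4}$ by $-\sg^{-\nu}\b{\s}_{xx}$, derive the weighted analogue of Lemma \ref{lem e1'}, bound the dissipation from below via the decomposition of $\f{1}{\r}$, estimate the $K_{i}$-type remainders, then multiply by $(1+\delta\tau)^{\xi}$, integrate in time and close with Lemmas \ref{lem L^{2} e2}--\ref{lem high} and the smallness of $\delta,\v$.

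There is, however, one concrete misstep in your outline. With the multiplier $-\sg^{-\nu}\s_{xx}$, the weight-commutator generated by the time-derivative term is not of the form $a\nu\int\sg^{-(\nu-1)}\s_{x}\s_{xx}\,dx$: integrating by parts, $-\int\sg^{-\nu}\s_{t}\s_{xx}\,dx=\f{d}{dt}\int\sg^{-\nu}\f{\s_{x}^{2}}{2}\,dx+\int(\sg^{-\nu})_{x}\s_{t}\s_{x}\,dx$ (the boundary term vanishes since $\s(t,0)=0$), so the commutator is $a\nu\int\sg^{-(\nu-1)}\s_{t}\s_{x}\,dx$ plus its $\b{\s}$ analogue. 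Because $\s_{t}$ is not one of the quantities your energy controls, plain Young's inequality does not dispose of it; this is exactly the paper's second left-hand term in $(\ref{psi_{xx} space w 1})$, which is estimated in $(\ref{psi_{xx} second term e})$ by substituting $(\ref{f1})_{2}$, $(\ref{f1})_{4}$ for $\s_{t},\b{\s}_{t}$ and using $\sg\leq C\delta$, producing $C\delta\|\sg^{-\f{\nu}{2}}(\s_{xx},\b{\s}_{xx})\|^{2}+C\delta\|\sg^{-\f{\nu}{2}}(\h_{x},\s_{x},\b{\h}_{x},\b{\s}_{x},\b{\s}-\s)\|^{2}+C\delta\|\sg^{-\f{\nu-2}{2}}(\h,\s,\b{\h},\b{\s})\|^{2}$, which is then absorbable. (The $\s_{x}\s_{xx}$-type commutator you describe would arise only if you used the divergence-form multiplier $-(\sg^{-\nu}\s_{x})_{x}$, in which case it comes from the viscous terms; that variant also works, but it is not what you set up.) Two further minor points: the Hardy inequality (Lemma \ref{hardy lem}) and Lemma \ref{lem non f} are not needed here --- the paper's $\mc{K}_{2},\mc{K}_{3}$ are handled by $(\ref{sigma})$, $(\ref{infty e2})$ and Cauchy--Schwarz alone, and Lemma \ref{lem non f} is in any case tailored to cubic terms in the unknowns themselves rather than their derivatives --- and the extra boundary contribution $C\delta^{1-\nu}(\h^{2}(t,0)+\b{\h}^{2}(t,0))$ your $K_{2}$ bound introduces is harmless, since it is dominated after time integration by the boundary dissipation on the left of $(\ref{L^{2} e2})$.
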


\begin{proof}

	Multiplying $(\ref{f1})_{2}$ by $-\sg^{-\nu}\psi_{xx}$,  $(\ref{f1})_{4}$ by $-\sg^{-\nu}\b{\s}_{xx}$  respectively with the function $\sg$ satisfying $(\ref{sig})$ and $(\ref{sg0})$,  then adding them together and integrating the resulted equation in $x$ over $\mathbb{R}_{+}$  lead to
	\begin{equation}
	\begin{aligned}
	&
\f{d}{dt}\int \sg^{-\nu} (\f{\s_{x}^{2}}{2}+\f{\b{\s}_{x}^{2}}{2})dx-a\nu \int \sg^{-(\nu-1)}(\s_{t}\s_{x}+\b{\s}_{t}\b{\s}_{x})dx+\int\sg^{-\nu} (\f{\mu}{\r}\s_{xx}^{2}+\b{\s}_{xx}^{2})dx=\sum_{i=1}^{3}\mc{K}_{i},
	\label{psi_{xx} space w 1}
	\end{aligned}
	\end{equation}
	where 
%
\begin{equation}
\begin{aligned}
\mc{K}_{1}	=&
\int\sg^{-\nu} [ 	u \psi_{x} \psi_{xx} 	-\f{n}{\r}(\b{\s}-\s)\s_{xx} +\f{p'_{1}(\r)}{\r}\h_{x}\s_{xx} +v \b{\s}_{x}\b{\s}_{xx} +(\b{\s}-\s)\b{\s}_{xx}	+\f{p_{2}{'}(n)}{n}\b{\h}_{x}\b{\s}_{xx}]	dx,
\\
\mc{K}_{2} 	=&
\int\sg^{-\nu}  [  \widetilde{u}_{x} \psi \psi_{xx}  	-\mu \widetilde{u}_{xx} ( \frac{1}{\rho}-\f{1}{\wt{\r}})\s_{xx}+(\f{p'_{1}(\r)}{\r}-\f{p'_{1}(\wt{\r})}{\wt{\r}})\s_{xx}\wt{\r}_{x}-(\f{n}{\r}-\f{\wt{n}}{\wt{\r}})(\wt{v}-\wt{u})\s_{xx} 	
+\widetilde{v}_{x} \b{\s} \b{\s}_{xx}
\\
&  	
+ (\wt{n}\wt{v}_{x})_{x} ( \frac{1}{n}-\f{1}{\wt{n}})\b{\s}_{xx}+(\f{p'_{2}(n)}{n}-\f{p'_{2}(\wt{n})}{\wt{n}})\b{\s}_{xx}\wt{n}_{x}-\f{\wt{n}_{x}}{\wt{n}}\b{\s}_{x}\b{\s}_{xx} -\f{(\b{\h}\wt{v}_{x})_{x}}{n}\b{\s}_{xx}]	dx,
\\	
\mc{K}_{3} =&
-\int\sg^{-\nu}  [\f{(\b{\h}\b{\s}_{x})_{x}}{n}\b{\s}_{xx}+(\f{1}{n}-\f{1}{\wt{n}})(\wt{n}\b{\s}_{x})_{x}\b{\s}_{xx}]	dx.
\nonumber
\end{aligned}
\end{equation}
%
First, we estimate terms in the left side of  $(\ref{psi_{xx} space w 1})$.
The second term  is estimated as follows:
\begin{equation}
\begin{aligned}
&a\nu \int\sg^{-(\nu-1)}(\s_{t}\s_{x}+\b{\s}_{t}\b{\s}_{x}) dx
\\
\leq&
 C\delta \|\sg^{-\f{\nu}{2}} (\s_{xx},\b{\s}_{xx}) \|^{2}+C\delta \|\sg^{-\f{\nu}{2}} (\h_{x},\s_{x},\b{\h}_{x},\b{\s}_{x},\b{\s}-\s) \|^{2}+C\delta \| \sg^{-\f{\nu-2}{2}}(\h,\s,\b{\h},\b{\s}) \|^{2}
  ,
\label{psi_{xx} second term e}
\end{aligned}
\end{equation}
where we have used $(\ref{sigma})$, $(\ref{f1})_{2}$, $(\ref{f1})_{4}$, $(\ref{infty e2})$  and Cauchy-Schwarz inequality.\\
With the help of $\delta $  and $\v$ small enough, the third term  is estimated as follows:
\begin{equation}
\begin{aligned}
\int \sg^{-\nu} (\f{\mu}{\r}\s_{xx}^{2}+\b{\s}_{xx}^{2})dx\geq& [\f{\mu}{\r_{+}}-C(\v+\delta)]\|\sg^{-\f{\nu}{2}} \s_{xx} \|^{2}+\| \sg^{-\f{\nu}{2}}\b{\s}_{xx} \|^{2}
\\
\geq&
 \f{\mu}{2\r_{+}}\|\sg^{-\f{\nu}{2}}\s_{xx} \|^{2}+\| \sg^{-\f{\nu}{2}}\b{\s}_{xx} \|^{2}.
\end{aligned}
\end{equation}
%
 We turn to estimate  terms on the right hand side of $(\ref{psi_{xx} space w 1})$. With the help of  $(\ref{sigma})$, $(\ref{infty e2})$  and Cauchy-Schwarz inequality, we  obtain
\begin{align}
&
| \mc{K}_{1} |  	\leq  	\f{\mu}{8\r_{+}}\|\sg^{-\f{\nu}{2}} \s_{xx} \|^{2}+\f{1}{8}\| \sg^{-\f{\nu}{2}}\b{\s}_{xx} \|^{2}+C\| \sg^{-\f{\nu}{2}}(\h_{x},\s_{x},\b{\h}_{x},\b{\s}_{x},\b{\s}-\s) \|^{2},
\\
&
| \mc{K}_{2} | 	\leq C \delta\|\sg^{-\f{\nu}{2}} (\s_{xx},\b{\s}_{xx}) \|^{2}+C\delta\| \sg^{-\f{\nu-2}{2}}(\h,\s,\b{\h},\b{\s}) \|^{2}+C\delta\|\sg^{-\f{\nu}{2}} (\b{\h}_{x},\b{\s}_{x}) \|^{2}
,
\label{K_{1} e}
\\&
| \mc{K}_{3} | 
\leq C(\v+\delta) \|\sg^{-\f{\nu}{2}}(\b{\s}_{x},\b{\s}_{xx}) \|^{2}.
\label{K_{4} e}
\end{align}
	%
Finally, we substitute  $(\ref{psi_{xx} second term e})$-$(\ref{K_{4} e})$ into $(\ref{psi_{xx} space w 1})$ to gain under the condition $\delta$ and $\v$ small enough that
\begin{equation}
\begin{aligned}
&
\frac{d}{dt} 	\int 	\sg^{-\nu}(\f{\s_{x}^{2}}{2}+\f{\b{\s}_{x}^{2}}{2})dx  	+	\f{\mu}{4\r_{+}}\|\sg^{-\f{\nu}{2}}\s_{xx} \|^{2}+\f{1}{4} \|\sg^{-\f{\nu}{2}}\b{\s}_{xx} \|^{2}
\\
\leq&
C\|\sg^{-\f{\nu}{2}} (\h_{x},\s_{x},\b{\h}_{x},\b{\s}_{x},\b{\s}-\s) \|^{2}+C\delta\| \sg^{-\f{\nu-2}{2}}(\h,\s,\b{\h},\b{\s}) \|^{2}
\label{psi_{xx} space weighted e}
\end{aligned}
\end{equation}
Multiplying $(\ref{psi_{xx} space weighted e})$ by $ (1+\delta \tau)^{\xi } $ and integrating the resulted inequality in $\tau $ over $[0,t]$,  and using 
Lemmas $\ref{lem L^{2} e2}$-$\ref{lem high}$ 
 and the smallness of $\delta$ and $\varepsilon$, we  obtain $(\ref{psi_{xx} time e 1})$. The proof of Lemma $\ref{psi_{xx} estimate lem}$   is completed.
\end{proof}
\textit{\underline{Proof of Proposition \ref{prop M_{+}=1 time decay }}}  
With the help of Lemmas $\ref{lem L^{2} e2}$-$\ref{psi_{xx} estimate lem}$, it holds for $\delta $ and $\varepsilon $ suitably small that
\begin{align}
&
 (1+ \delta t)^{\xi} \|\sg^{-\f{\nu}{2}}(\h,\s,\b{\h},\b{\s})\|^{2}_{1}	+\int^{t}_{0} 	( 1+\delta \tau  )^{\xi}	\|\sg^{-\f{\nu-2}{2}}(\h,\s,\b{\h},\b{\s})\|^{2} d\tau 
 +\int_{0}^{t}(1+\delta \tau)^{\xi} \| \sg^{-\f{\nu}{2}}(\h_{x},\s_{x},\b{\h}_{x},\b{\s}_{x}) \|^{2}
 \notag
  \\
 &
 +C\int_{0}^{t}(1+\delta \tau)^{\xi} \|\sg^{-\f{\nu}{2}} (\s_{xx},\b{\s}_{xx},\b{\s}-\s) \|^{2}
+C\f{1}{\delta^{\nu}}\int_{0}^{t}(1+\delta \tau )^{\xi}(\h^{2}(t,0)+\b{\h}^{2}(t,0))d\tau
\notag
\\
\leq&
 C\| \sg^{-\f{\l}{2}}(\h_{0},\s_{0},\b{\h}_{0},\b{\s}_{0})  \|^{2}_{1}+C \delta  \xi 	\int^{t}_{0}	( 1   +\delta \tau  )^{\xi-1 }	\| \sg^{-\f{\nu}{2}} ( \h,\s,\b{\h},\b{\s} )  \|^{2}_{1}  d\tau,
\label{high order space time e} 
\end{align}
where  $C>0$ is a positive constant  independent of $T$, $\nu$ and $\xi$.
Applying  similar induction arguments as in  \cite{KM1,NM,CHS}  to $(\ref{high order space time e})$, we have
\begin{equation}
\begin{aligned}
&
 ( 1+ \delta t )^{\f{\l-\nu}{2} + \beta}  \|\sg^{-\f{\nu}{2}} ( \h,\s,\h_{x},\s_{x},\b{\h},\b{\s},\b{\h}_{x},\b{\s}_{x}) \|^{2}  +  \int^{t}_{0}  ( 1  +\delta \tau )^{ \f{\l-\nu}{2}   +\beta}  \|\sg^{-\f{\nu-2}{2}} ( \h,\b{\h},\s,\b{\s} ) \|^{2}
 \\
 &
+\int^{t}_{0} ( 1  +\delta \tau )^{ \f{\l-\nu}{2} +\beta } \| \sg^{-\f{\nu}{2}}(\h_{x},\s_{x},\s_{xx},\b{\h}_{x},\b{\s}_{x},\b{\s}_{xx},\b{\s}-\s)  \|^{2}
\\
\leq&
 C( 1  +\delta t )^{ \beta }  \|\sg^{-\f{\l}{2}} (\h_{0},\s_{0},\h_{0x},\s_{0x},\b{\h}_{0},\b{\s}_{0},\b{\h}_{0x},\b{\s}_{0x})  \|^{2}
\end{aligned}
\end{equation}
for $\beta>0$,
which implies
\begin{equation}
\| \sg^{-\f{\nu}{2}}( \h,\s,\h_{x},\s_{x},\b{\h},\b{\s},\b{\h}_{x},\b{\s}_{x}) (t)\|^{2} \leq  C ( 1+\delta t )^{- \frac{\lambda-\nu  }{4}} \|\sg^{-\f{\l}{2}}(\h_{0},\s_{0},\h_{0x},\s_{0x},\b{\h}_{0},\b{\s}_{0},\b{\h}_{0x},\b{\s}_{0x})  \|^{2}.
\end{equation} 
\textbf{Acknowledgments}

The research of the paper is supported by the National Natural Science Foundation of China (Nos. 11931010, 11871047, 11671384), by the key research project of Academy for Multidisciplinary Studies, Capital Normal University, and by the Capacity Building for Sci-Tech Innovation-Fundamental Scientific Research Funds (No. 007/20530290068).


\end{document}